\documentclass[11pt,
]{amsart}
\usepackage{
amssymb, graphicx, color, cite
}
\usepackage[T1]{fontenc}

\renewcommand{\Re}{\textrm{Re}\,} 

\date{\today}
\usepackage[linkcolor=blue, citecolor=black, breaklinks=true]{hyperref}

\usepackage{mathtools}

\mathtoolsset{showonlyrefs}
\renewcommand{\r}[1]{\eqref{#1}}
\numberwithin{equation}{section}%

\newtheorem{theorem}{Theorem}
\newtheorem{lemma}{Lemma}[section]
\newtheorem{proposition}[lemma]{Proposition}
\newtheorem{corollary}[lemma]{Corollary}

\theoremstyle{definition}

\DeclareMathOperator{\tr}{tr}

\DeclareMathOperator{\supp}{supp}

\DeclareMathOperator{\dist}{dist}
\newcommand{\eps}{\varepsilon}

\newcommand{\R}{{\mathbb R}}
\renewcommand{\S}{{\mathbb S}}
\newcommand{\C}{{\mathbb C}}
\newcommand{\Id}{\mbox{\rm Id}}

\newcommand{\be}[1]{\begin{equation}\label{#1}}
\newcommand{\ee}{\end{equation}}

\renewcommand{\d}{\mathrm{d}}

\renewcommand{\i}{\mathrm{i}}

\newcommand{\bo}{{\partial M}}
\newcommand{\Mint}{M^\text{\rm int}}

\newcommand{\B}{\mathcal{B}}

\newcommand{\e}{\mathsf{e}}

\title[Local rigidity of conformally Euclidean metrics]{Local rigidity of conformally Euclidean metrics for the anisotropic Calder\'on problem}

\author[S. Mu\~noz-Thon]{Sebasti\'an Mu\~noz-Thon}
\address{Universit\'e Paris-Saclay, Laboratoire de math\'ematiques d'Orsay, 91405, Orsay, France}
\thanks{S.~M.-T. was supported by the European Research Council (ERC) under the European Union’s Horizon 2020 research and innovation programme (Grant agreement no. 101162990 -- ADG)}
\email{sebastian.munoz-thon@universite-paris-saclay.fr}

\author[P. Stefanov]{Plamen Stefanov}
\address{Department of Mathematics, Purdue University, West Lafayette, IN 47907}
\thanks{P.S.\ partly supported by  NSF  Grant DMS-2452757}
\email{stefanov@math.purdue.edu}

\DeclareMathOperator{\diver}{div}

\begin{document}
\begin{abstract}
We prove local rigidity of every smooth conformally Euclidean metric for the anisotropic Calderón problem on smooth compact domains $M\subset\mathbb R^n$, $n\ge3$. Any smooth Riemannian metric $g$ sufficiently close to a fixed background $g_0=e^{2c}\mathsf e$ in $H^s(M)$, with integer $s>n/2+1$,  having the same Dirichlet-to-Neumann map satisfies $g=\Phi^*g_0$ for a smooth diffeomorphism $\Phi$ fixing the boundary pointwise. 
\end{abstract}

\maketitle

\section{Introduction}\label{sec1} 
Let $M\subset\mathbb R^n$, $n\ge3$, be a smooth compact domain, and let $\mathsf e$ denote the Euclidean metric. For a smooth Riemannian metric $g$ on $M$, the Dirichlet-to-Neumann (DN) map is defined, for $f\in C^\infty(\bo)$, by
\[
\Lambda_g f=\left.\partial_{\nu_g}u\right|_{\bo},
\]
where  $u$ is the unique solution of the Dirichlet problem
\[
\begin{cases}
\Delta_g u=0&\text{in }M,\\
u=f&\text{on }\bo.
\end{cases}
\]
Here,  $\nu_g$ is the outward unit normal with respect to $g$. 
We prove  that for every smooth conformal factor $c$ in $M$, if $g$ is sufficiently close to $g_0\colon=e^{2c}\mathsf e$ in $H^s(M)$, with integer $s>n/2+1$, and if $\Lambda_g=\Lambda_{g_0}$, then $g$ is isometric to $g_0$ by a diffeomorphism fixing the boundary pointwise. We present the proof for $c=0$ first because it is simpler, and because its proof allows  the smallness parameter $\varepsilon_0>0$ to be bounded from below in an explicit way, in principle. 

\begin{theorem}
\label{thm1}
Let $M\subset\R^n$, $n\ge3$, be a smooth compact domain. Fix an integer $s>n/2+1$. There exists $\varepsilon_0>0$, depending only on $M$ and $s$, such that for every smooth Riemannian metric $g$ on $M$, the conditions
\be{1a}
\Lambda_g=\Lambda_\e,\qquad 
\|g-\e\|_{H^s(M)}<\varepsilon_0
\ee
imply that
\[
g=y^*\e
\]
for a  diffeomorphism $y\colon M\to M$ fixing $\bo$ pointwise. The map $x\mapsto y$ is given by the $g$-harmonic extensions
\[
\Delta_g y^j=0\quad\text{in $M$},
\qquad y^j|_\bo=x^j|_\bo,\qquad j=1,\dots,n.
\]
\end{theorem}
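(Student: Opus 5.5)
The plan is to reduce the statement to an injectivity-with-stability estimate in a fixed gauge, with the gauge chosen to be the harmonic coordinates $y$ of the theorem.

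\emph{Step 1: $y$ is a diffeomorphism and transport to the new coordinates.} Since $\|g-\e\|_{H^s}<\varepsilon_0$ and $s>n/2+1$, the coefficients of $\Delta_g$ are $H^{s-1}\subset C^0$ close to Euclidean. Elliptic estimates then give $\|y-x\|_{H^{s}(M)}\lesssim\|g-\e\|_{H^s}$. This rests on $y^j-x^j$ having zero boundary data and $\Delta_g(y^j-x^j)=-\Delta_g x^j$, where $\Delta_g x^j$ is controlled by first derivatives of $g$. Since $H^s\subset C^1$, $dy$ is uniformly close to $\Id$. Together with $y|_{\bo}=\Id$, a degree argument shows that $y\colon M\to M$ is a diffeomorphism, smooth because $g$ is.

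Set $h=(y^{-1})^*g$. Then $g=y^*h$, and the DN map is invariant under diffeomorphisms fixing $\bo$, so $\Lambda_h=\Lambda_g=\Lambda_\e$. The functions $y^j$ are $h$-harmonic and are the coordinates on the image, so in these coordinates $\Delta_h x^j=0$. With $\sigma^{ij}=\sqrt{\det h}\,h^{ij}$ this is the divergence-free gauge $\partial_i\sigma^{ij}=0$. Moreover $\|\sigma-\delta\|_{H^{s-1}}\lesssim\varepsilon_0$. It suffices to show $\sigma=\delta$: because $n\ge3$, $\det\sigma=(\det h)^{(n-2)/2}$ recovers $\det h$ and hence $h$, giving $h=\e$ and $g=y^*\e$.

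\emph{Step 2: an exact integral identity.} For Euclidean harmonic $u_0,v_0$, let $u$ be the $h$-harmonic function with $u|_{\bo}=u_0|_{\bo}$. Green's formula and $\Lambda_h=\Lambda_\e$ give the Alessandrini-type identity
\[
\int_M (\sigma^{ij}-\delta^{ij})\,\partial_i u\,\partial_j v_0\,dx=0 .
\]
Writing $u=u_0+w$, the correction satisfies $\partial_i(\sigma^{ij}\partial_j w)=-\partial_i((\sigma^{ij}-\delta^{ij})\partial_j u_0)$ with $w|_{\bo}=0$, so $\|w\|_{H^1}\lesssim\|\sigma-\delta\|\,\|u_0\|$ in suitable norms. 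The identity therefore becomes
\[
L(\sigma-\delta)(u_0,v_0):=\int_M(\sigma^{ij}-\delta^{ij})\,\partial_iu_0\,\partial_jv_0\,dx=-\int_M(\sigma^{ij}-\delta^{ij})\,\partial_iw\,\partial_jv_0\,dx ,
\]
whose right-hand side is quadratic in $\sigma-\delta$.

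\emph{Step 3: stable injectivity of the linearization and closing the argument.} I need an estimate
\[
\|\tau\|_{X}\le C\sup\bigl|L\tau(u_0,v_0)\bigr|,\qquad \tau=\sigma-\delta,\ \partial_i\tau^{ij}=0,
\]
where the supremum runs over suitably normalized harmonic pairs and $X$ is some weak norm, for instance $H^{-k}$. The test pairs will be explicit Euclidean harmonic functions: the exponentials $u_0=e^{\zeta\cdot x}$, $v_0=e^{\eta\cdot x}$ with $\zeta\cdot\zeta=\eta\cdot\eta=0$, together with linear and quadratic polynomials. These produce the Fourier transform of $\tau$ (extended by zero) contracted with $\zeta\otimes\eta$, over all $\zeta+\eta=-\i\xi$. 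By the standard computation, the tensors $\zeta\otimes\eta$ then span all symmetric tensors modulo those annihilating divergence-free fields, that is, modulo $\xi\otimes a+a\otimes\xi$. The divergence-free condition removes exactly this gauge kernel, which consists of the Lie derivatives $\mathcal L_v\e$ in linearized form, so $\hat\tau(\xi)$ is recovered. Combining this with the quadratic bound on the right-hand side of Step 2, and interpolating with the a priori $H^{s-1}$ smallness of $\tau$, should give $\|\tau\|_X\le C\varepsilon_0^{\theta}\|\tau\|_X$ for some $\theta>0$, hence $\tau=0$ once $\varepsilon_0$ is small.

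\emph{Main obstacle.} I expect the hardest step to be making Step 3 quantitative. The exponential solutions grow like $e^{|\xi||x|}$, so the stability is only logarithmic in high frequencies, while the quadratic remainder must be bounded in the same norm in which $L$ is inverted. My first approach would be to use only a compact frequency window of size $R$ to bound $\hat\tau$ there, absorbing the exponential constant $e^{CR}$. The high frequencies would be controlled by the $H^{s-1}$ bound as $R^{-(s-1)}\|\tau\|_{H^{s-1}}$, and $R$ would be tuned against $\varepsilon_0$. If this fails to close, the fallback is to work with the divergence-free gauge and polynomial harmonic pairs of bounded degree. That would replace the exponentials by a Fredholm-type argument on $\ker\operatorname{div}$, giving a Lipschitz estimate that closes directly.
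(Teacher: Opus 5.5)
Your Steps 1 and 2 match the paper's setup: the harmonic gauge, the divergence-free perturbation $\tau$ (the paper's $B$), and the Alessandrini identity. Step 3, however, does not close, and neither remedy you list repairs it.

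If you control the correction $w=u-e^{x\cdot\zeta}$ by energy estimates, the identity only gives $|\widehat\tau(\xi)|\lesssim e^{C|\xi|}\|\tau\|_{L^\infty}\|\tau\|_{L^2}$. The high-frequency tail $R^{-(s-1)}\|\tau\|_{H^{s-1}}$ is not proportional to $m=\|\tau\|_{L^2}$, which leaves two bad options:
\begin{itemize}
\item Choosing $R$ in terms of $\varepsilon_0$ alone gives only $m\lesssim R^{-(s-1)}\varepsilon_0$, which is smallness, not $m=0$.
\item Absorbing the tail into $m^2$ forces $R\gtrsim(\varepsilon_0/m)^{1/(s-1)}$, which tends to infinity as $m/\varepsilon_0\to0$. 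Then $e^{CR}$ beats every power of $m$ and $\varepsilon_0$ that interpolation can supply.
\end{itemize}
So no inequality of the form $\|\tau\|_X\le C\varepsilon_0^\theta\|\tau\|_X$ comes out.

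The fallback fails for structural reasons. Finitely many polynomial test pairs give a finite-rank map, so it has a nontrivial kernel on the infinite-dimensional space of compactly supported divergence-free tensors, e.g.\ $\Delta\phi\,\Id-D^2\phi$ with $\phi\in C_0^\infty$. Even with all harmonic pairs, normalized in any Sobolev norm, interior estimates bound the linearized form by $C_k\|\tau\|_{H^{-k}}$ for every $k$ when $\tau$ is supported in a fixed interior compact set. Hence no Lipschitz estimate in Sobolev norms can hold. This instability is exactly why linearization alone cannot give local uniqueness.

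The missing idea is to use $\Lambda_h=\Lambda_\e$ a second time, to remove the exponential loss entirely.
\begin{itemize}
\item Lee--Uhlmann boundary determination shows that $h-\e$ is flat at $\bo$. You already need at least equality of the induced boundary metrics for your Step 2 identity, and vanishing of $\tau\nu$ on $\bo$ to get $\widehat\tau(\xi)\xi=0$ for the zero extension.
\item With this, the $h$-harmonic extension $u$ of $e^{x\cdot\zeta}|_{\bo}$ has the same Cauchy data as $e^{x\cdot\zeta}$. Hence $r=e^{-x\cdot\zeta}u-1$ is supported in $M$ and solves $P_\zeta r=-\tau^{ij}\zeta_i\zeta_j-\tau^{ij}(\partial_i\partial_j+2\zeta_i\partial_j+\zeta_i\zeta_j)r$ on $\R^n$, where $P_\zeta=\Delta+2\zeta\cdot\nabla$.
\item The linear-weight Carleman estimate $\|v\|_{L^2}\le C\langle\zeta\rangle^{-1}\|P_\zeta v\|_{L^2}$, for $v$ supported in a fixed ball, then gives $\|r\|_{L^2}\le C|\zeta|\,\|\tau\|_{L^2}$ whenever $|\zeta|\,\|\tau\|_{L^\infty}$ is small. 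There is no exponential constant.
\item In the identity the weights combine to $e^{x\cdot(\zeta+\eta)}=e^{-\i x\cdot\xi}$, which has modulus one. This gives the polynomial quadratic bound $|\widehat\tau(\xi)|\le C|\xi|\,\|\tau\|_{L^2}^2$ for $|\xi|<C_0/\|\tau\|_{L^\infty}$.
\end{itemize}
Only with this polynomial bound does the split at $R=L(\varepsilon_0/m)^{1/s}$ close. The low-frequency term satisfies $R^{n+2}m^4\lesssim\varepsilon_0^2m^2$, which uses $s>n/2+1$. The interpolation bound $\|\tau\|_{L^\infty}\lesssim m^{1/s}\varepsilon_0^{1-1/s}$ keeps $R$ in the admissible range.

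This needs $H^s$ rather than $H^{s-1}$ control of $\tau$. You get it by writing the equation for $y-x$ in divergence form, which yields $y-x\in H^{s+1}$.
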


The idea is the following. 
We first put $g$ in harmonic coordinates fixing the boundary. Writing $\overline{g}=y_*g$, the tensor
\[
B=\sqrt{\det\overline{g}}\,\overline{g}^{-1}-\Id
\]
is then divergence-free (which reminds us of the solenoidal gauge in boundary rigidity but this time, it is about the conductivity tensor). Equality \r{1a} of the DN maps, together with boundary determination in \cite{LeeU}, implies that $\overline{g}-\mathsf e$ has a vanishing boundary jet. We can therefore extend $\overline{g}$ and $B$ smoothly outside $M$ by the Euclidean metric and zero, respectively.

For the Euclidean metric, we have the harmonic functions used by Calderón in his original paper  \cite{Calderon80}:
\begin{equation}\label{1}
u_{\mathsf e}(x,\zeta)=e^{x\cdot\zeta},\qquad \zeta\in \C^n, \; 
\zeta\cdot\zeta=0.
\end{equation}
Equality of the DN maps also allows us to construct  global $\overline{g}$-harmonic solutions of the form $e^{x\cdot\zeta}(1+r_\zeta)$, where $r_\zeta$ is smooth and supported in $M$, as shown in  Proposition~\ref{pr1} below. 
The remainder $r_\zeta$ is controlled when $|\zeta|\|B\|_{L^\infty} \ll1$ by a known  Carleman estimate. An orthogonality identity of Alessandrini type, combined with the divergence-free condition, then yields a quadratic estimate for   $\widehat B$ in a ball in the frequency space with radius at most $C_0 /\|B\|_{L^\infty}$, see \r{13}. We choose its radius as a fractional power of $\|B\|_{L^2}^{-1}$ and control the higher frequencies using the a priori $H^s$ bound supplied by the theorem’s smallness assumption \r{1a}. We use interpolation to ensure that this radius would lie within the allowed range, and smallness allows us to absorb both contributions, proving $B=0$.

We emphasize that we do not take $|\zeta|\to\infty$ for a fixed metric. Our solutions are $\overline{g}$-harmonic extensions of the boundary values of the Euclidean solutions \r{1}.
We keep  the Euclidean exponential factor $e^{x\cdot\zeta}$, although for a general anisotropic metric its phase does not satisfy the  eikonal equation $\overline{g}^{ij}\zeta_i\zeta_j=0$ in the metric. 
This representation does not determine the interior phase, since the factor $1+r_\zeta$ may contribute to it.
Instead of taking the usual $|\zeta|\to\infty$ asymptotics of the Sylvester–Uhlmann construction in \cite{SylvesterU87}, which force $r_\zeta \to0$ in their setting, we obtain in Proposition~\ref{pr1} an estimate for $r$ in terms of the perturbation, uniformly over the allowed frequency range.
Although this range grows as the perturbation tends to zero, the rigidity argument requires no large-frequency limit for a fixed nonzero perturbation.

Our main result, which implies Theorem~\ref{thm1} but with a different proof, is to allow $g_0$ to be any fixed smooth conformally Euclidean background metric.

\begin{theorem}\label{thm2}
Let $M\subset\R^n$, $n\ge3$, be a smooth compact domain, let $g_0=e^{2c}\e$ with $c\in C^\infty(M)$, and fix an integer $s>n/2+1$. There exists $\varepsilon_0>0$, depending on $M$, $g_0$, $s$, such that every smooth Riemannian metric $g$ on $M$ satisfying
\[
\|g-g_0\|_{H^s(M)}<\varepsilon_0
\]
and
\begin{equation}\label{15}
\Lambda_g=\Lambda_{g_0}
\end{equation}
has the form $g=\Phi^*g_0$ for a smooth diffeomorphism $\Phi\colon M\to M$ fixing $\partial M$ pointwise.
\end{theorem}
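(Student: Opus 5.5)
We reduce Theorem~\ref{thm2} to a perturbative problem around $g_0=e^{2c}\e$ and mimic the scheme described for Theorem~\ref{thm1}. The natural gauge is $g_0$-harmonic coordinates fixing the boundary. Let $\Phi=(\Phi^1,\dots,\Phi^n)$ be defined by $\Delta_g\Phi^j=\Delta_{g_0}$-type conditions. Concretely, we require that $\Phi$ be harmonic as a map $(M,g)\to(M,g_0)$, with $\Phi|_\bo=\Id$. Since $g$ is $H^s$-close to $g_0$ and $s>n/2+1$, the implicit function theorem for the harmonic map equation shows that $\Phi$ exists, is $H^{s}$-close to the identity (with one derivative of loss controlled by elliptic regularity), and is a diffeomorphism fixing $\bo$. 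Set $\overline g=\Phi_*g$. The harmonic map condition then becomes the statement that $\Id\colon(M,\overline g)\to(M,g_0)$ is harmonic. This is a divergence-type gauge condition, linear in $\overline g$ at leading order: $\overline g^{ij}(\Gamma(\overline g)^k_{ij}-\Gamma(g_0)^k_{ij})=0$. By boundary determination \cite{LeeU} and $\Lambda_{\overline g}=\Lambda_g=\Lambda_{g_0}$, the difference $h=\overline g-g_0$ has vanishing boundary jet. We can therefore extend $h$ by zero to a neighborhood, or to a large ball, where $g_0$ is extended smoothly.

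The next step replaces Calderón's exponentials by complex geometrical optics solutions for the background. For $n\ge3$ the conductivity $\gamma_0=e^{(n-2)c}$ is isotropic. Under the conformal change, $\Delta_{g_0}u=0$ becomes $\nabla\cdot(\gamma_0\nabla u)=0$, and this reduces to a Schrödinger equation with potential $q_0=\Delta\gamma_0^{1/2}/\gamma_0^{1/2}$. The Sylvester--Uhlmann construction then gives $g_0$-harmonic functions $u_0=\gamma_0^{-1/2}e^{x\cdot\zeta}(1+\psi_\zeta)$, with $\|\psi_\zeta\|\lesssim|\zeta|^{-1}$ for $|\zeta|$ large. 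Equality of the DN maps plus the vanishing jet lets us proceed as in Proposition~\ref{pr1}. We solve for $\overline g$-harmonic $u=u_0(1+r_\zeta)$ with $r_\zeta$ compactly supported. The Carleman estimate, now for the conjugated operator with the smooth background potential added, gives $\|r_\zeta\|\lesssim|\zeta|\,\|h\|_{L^\infty}$ uniformly for $|\zeta|\|h\|_{L^\infty}\ll1$ and $|\zeta|\ge C(g_0)$. Alessandrini's identity
\[
\int_M\langle \sqrt{\det\overline g}\,\overline g^{-1}-\sqrt{\det g_0}\,g_0^{-1},\,\d u\otimes \d v_0\rangle\,dx=0
\]
holds for all such $u$ and $g_0$-harmonic $v_0$. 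Taking $\zeta_1+\zeta_2=-\i\xi$ as usual and using the gauge to kill the components of $\widehat B$ not seen by $\zeta_1\otimes\zeta_2$, we obtain a bound on $B=\sqrt{\det\overline g}\,\overline g^{-1}-\sqrt{\det g_0}\,g_0^{-1}$. It reads $|\widehat{\tilde B}(\xi)|\lesssim \|B\|^2+|\zeta|^{-1}\|B\|$ on a frequency ball $|\xi|\le R\lesssim\|B\|_{L^\infty}^{-1}$, where $\tilde B$ is $B$ twisted by $\gamma_0$-weights.

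We then conclude as in the Euclidean case. We choose $R$ a fractional power of $\|B\|_{L^2}^{-1}$, bound high frequencies by $R^{-2(s-1)}\|B\|_{H^{s-1}}^2$, and use interpolation between $L^2$ and $H^{s-1}$ to verify that $R\|B\|_{L^\infty}\ll1$. Smallness of $\varepsilon_0$ absorbs everything, giving $B=0$, hence $\overline g=g_0$ and $g=\Phi^*g_0$.

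The main obstacle is the term $|\zeta|^{-1}\|B\|$, which is linear rather than quadratic in $B$. It comes from the background correction $\psi_\zeta$ and from the fact that the weights $\gamma_0^{-1/2}$ and $1+\psi$ do not cancel to a pure Fourier transform. Taking $|\zeta|$ large does not help by itself, because we are restricted to $|\zeta|\lesssim\|B\|^{-1}$.

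To handle it, we would treat the linearized identity exactly. The map $B\mapsto\big(\xi\mapsto\int\langle B,\d u_0\otimes\d v_0\rangle\big)$, restricted to gauge-fixed $B$ with zero boundary jet, is the linearized anisotropic Calderón map at $g_0$. Its injectivity modulo gauge at a conformally flat metric is known; it follows from density of products of harmonic functions combined with the solenoidal decomposition. We would upgrade this to a stability estimate $\|B\|_{H^{-N}}\lesssim\|\text{linear form}\|$ with controlled constants. The exact identity gives linear form $=O(\|B\|^2_{\text{strong}})$, and interpolation with the $H^{s}$ bound then forces $B=0$. Proving this quantitative linear stability uniformly is the step we expect to be hardest.
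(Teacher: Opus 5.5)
There are two genuine gaps: the gauge you choose does not give flat boundary jets, and the final step is left open.

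\textbf{The gauge.} Boundary determination \cite{LeeU} only gives a collar diffeomorphism $\psi$ such that $\psi^*g$ and $g_0$ have equal jets at $\bo$. Flatness of $\overline g-g_0$ then requires $\Phi\circ\psi-\Id$ to be flat, so the Neumann data of $\Phi$ must match those of the identity. For $g_0=\e$ the harmonic-map equation is just $\Delta_g\Phi^k=0$, and these data are $\Lambda_g(x^k|_\bo)$; that is why Lemma~\ref{lemma2} works. For the curved target $(M,g_0)$, however, the equation $\Delta_g\Phi^k=-g^{ij}\Gamma(g_0)^k_{lm}(\Phi)\,\partial_i\Phi^l\partial_j\Phi^m$ has an interior source depending on the unknown $\Phi$. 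Hence $\partial_\nu\Phi$ is not determined by $\Lambda_g=\Lambda_{g_0}$, and without flatness the zero extension of $B$ is not in $H^s(\R^n)$.

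There is a second problem with this gauge. The implicit function theorem needs the Dirichlet Jacobi operator $\nabla^*\nabla-\mathrm{Ric}_{g_0}$ on vector fields to be invertible, and this fails for some admissible backgrounds. Take the stereographic round metric $4(1+|x|^2)^{-2}\e$: on all of $\S^n$, gradients of first spherical harmonics give the eigenvalue $-(n-2)<0$, while the operator is positive on small balls. So on some ball the first Dirichlet eigenvalue vanishes. This same operator is what would have to exclude the linearized gauge directions $\mathcal G_aX$, $X\in H^1_0(M)$, at the end.

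The paper avoids both problems by using only scalar Dirichlet problems. The $g$-harmonic extension $F_g$ of a $g_0$-harmonic embedding $F\colon U\to\R^N$ (Lemma~\ref{lemma5}) has Cauchy data fixed by the DN map. The map $\Phi=F^{-1}\circ\pi\circ F_g$ then inherits flatness (Lemma~\ref{lemma7}). The normal discrepancy $Z$ gives $\|\diver B\|_{L^2}\le C\|B\|_{L^2}$ (Lemma~\ref{lemma8}).

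\textbf{The conclusion.} As you observe, the linear error cannot be absorbed as in Section~\ref{sec2}. A pointwise bound $|\xi|^{-1}\|B\|$ is useless after integration, since $\int_{T\le|\xi|\le R}|\xi|^{-2}\,\d\xi\sim R^{n-2}$. For bounded $|\xi|$ the linear error is not small at all. Your proposed remedy, a uniform quantitative stability estimate for the linearized map, is unproven and in fact unnecessary.

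The missing idea is a compactness--contradiction argument:
\begin{itemize}
\item The paper bounds the linear errors in $L^2_\xi$ by an operator estimate (Lemma~\ref{lemma13}), which yields \eqref{60}.
\item For a putative sequence of counterexamples $g_j\to g_0$, the normalized tensors $H_j=B_j/b_{2,j}$, with the cutoffs \eqref{69}, satisfy $\limsup_j\|\mathbf 1_{\{|\xi|>T\}}\widehat H_j\|_{L^2}\le C/T$.
\item Low frequencies are handled by Hilbert--Schmidt compactness. Thus, after passing to a subsequence, $H_j\to H$ strongly in $L^2$ with $\|H\|_{L^2}=1$.
\item The limit annihilates the linearized identity, so Proposition~\ref{pr2} gives $H=\mathcal G_aX$ with $X\in H^1_0(M)$.
\item Finally, $Z_j/b_{2,j}$ converges weakly to $V=-DF\,X$ with $DF^\top V=0$. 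This forces $X=0$, contradicting $\|H\|_{L^2}=1$.
\end{itemize}
Only qualitative injectivity modulo the gauge is used. The paper proves that injectivity directly (Proposition~\ref{pr2}) rather than quoting it.
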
 

The proof of Theorem~\ref{thm2} replaces the harmonic coordinates of Theorem~\ref{thm1} by 
a harmonic embedding in a higher-dimensional Euclidean space to construct the gauge. We  replace the Euclidean exponentials \r{1} by background C.G.O.\ solutions. 
For the conformal background $g_0=e^{2c}\mathsf e$, we use the normalized conductivity perturbation
\[
B=e^{-(n-2)c}\sqrt{\det\overline{g}}\,\overline{g}^{-1}-\Id,
\]
where $\overline{g}$ is the metric in the harmonic gauge. This agrees with the preceding definition when $c=0$, and for uniqueness it is enough to prove $B=0$. We do not have $\diver B=0$ anymore; instead, we prove that $\|\diver B\|_{L^2}\le C\|B\|_{L^2}$. 
Assuming that the uniqueness does not hold in this gauge, we get a normalized sequence of perturbations for which we establish compactness. Passing to a subsequence, we obtain a nonzero element of the kernel of the linearization. We show that this kernel consists of potential tensors, and that our choice of gauge forces the limiting tensor to vanish. This contradiction completes the argument. Since we prove Theorem~\ref{thm2} by a compactness argument, an explicit lower bound for $\varepsilon_0$ is not readily  provided.

\vspace{-0.06in}
\subsection*{Previous work}
Calder\'on's problem has a rich history starting with the original work by Calder\'on \cite{Calderon80} who studied  isotropic conductivities  (corresponding to metrics conformal to $\e$ when $n\ge3$ in metrics language), and proved injectivity of the linearization using the solutions \r{1}. In their seminal paper \cite{SylvesterU87}, Sylvester and Uhlmann proved global uniqueness for the nonlinear problem in dimensions $n\ge3$, for isotropic conductivities, introducing complex geometric optics (C.G.O.) solutions. Those solutions created a powerful technique used in many other problems. In \cite{Nachman_98}, Nachman proposed a constructive reconstruction method. 

Since the paper \cite{SylvesterU87} appeared, it was conjectured that a similar result should hold for general anisotropic conductivities (general metrics in the metric language), up to an action of a diffeomorphism fixing $\partial M$ pointwise. 
The anisotropic version turned out to be much harder. 
The two-dimensional case is more approachable, but not easy by any means; and there one has conformal invariance as well, in addition to the diffeomorphism one. It has been solved, see \cite{Sylvester90,LassasU01,AstalaLP05, GuillarmouTzou11, CarsteaLT24}.

The $n\ge3$ anisotropic case is mostly open. The main difficulty, when trying to adapt the  method in \cite{SylvesterU87}, is the inability to construct C.G.O.\ solutions. 
Under an analyticity assumption of the metric class, uniqueness modulo the gauge has been established in increased generality in \cite{LeeU,LassasU01,LassasTU03,LLS-Poisson20}. Carleman estimates and complex geometric optics constructions have yielded uniqueness results for conformal factors on   conformally transversally anisotropic manifolds, which have conformally product types of metrics. Dos Santos Ferreira, Kenig, Salo, and Uhlmann \cite{DosSantosKSU09} characterized the geometric restrictions imposed by the existence of limiting Carleman weights and constructed complex geometric optics solutions on admissible manifolds, obtaining uniqueness within a fixed conformal class. The approach was extended to more general transversal geometries in \cite{DosSantosKLS16}. Until very recently, there was no result for an open set of metrics, and even the linearization about a fixed anisotropic metric was not known to be injective modulo the linearized gauge. Even if it were, the inherent instability of the problem \cite{Alessandrini88, Mandache} would not yield local uniqueness by linearization, see also \cite{SU-JFA09}. Here we prove local rigidity near each fixed smooth conformally Euclidean metric.

Our first  theorem belongs to a family of rigidity results saying that a metric which looks flat from boundary measurements must be flat, up to the natural diffeomorphism gauge. In boundary rigidity, Gromov \cite{Gromov}, among many other results, established rigidity of Euclidean regions. A Lorentzian counterpart was obtained by Oksanen, Rakesh, and Salo \cite{ORS-Lorentzian26}: a  globally hyperbolic metric agreeing with the Minkowski metric outside a compact set and having the same hyperbolic DN map is isometric to the Minkowski metric. Those results have been generalized.  Burago and Ivanov \cite{Burago-Ivanov} proved boundary rigidity for metrics sufficiently close to Euclidean ones. In subsequent work \cite{ORS-Semiglobal26}, Oksanen, Rakesh, and Salo extended their rigidity result to semiglobal uniqueness, allowing the background metric to be a sufficiently small perturbation of the Minkowski metric. Theorem~\ref{thm2} establishes local rigidity for each fixed smooth conformally Euclidean background and nearby anisotropic metrics.

\vspace{-0.06in}
\subsection*{Acknowledgments}
The authors used ChatGPT (OpenAI) for mathematical discussions, assistance with   checking parts of the proof, and checking bibliographic references. The authors take responsibility for the final mathematical statements and proofs. Thanks are due to Ali Feizmohammadi for attracting P.S.'s attention to   \cite[Theorem~5.3]{Salo08}, and to him, Katya Krupchyk, and  Gunther Uhlmann for the numerous discussions on  Calder\'on's problem. S.~M.-T. thanks Thibault Lefeuvre for critical comments while working on this project.

\vspace{-0.06in}
\subsection*{Note}

After the first version of this paper, containing Theorem~\ref{thm1},
was posted, Yi-Hsuan Lin informed P.S.\ that he had independently obtained the same result as that theorem by a related argument. Lin's preprint was
subsequently posted as \cite{Lin26}.

\section{Proof of Theorem~\ref{thm1}}\label{sec2}
All metrics, functions, and tensors in the rest of the paper are smooth unless stated otherwise.
We work in this section under the assumptions of Theorem~\ref{thm1}, choosing $\varepsilon_0$ sufficiently small as required below. We write $h=g-\e$ for the original metric perturbation. Sobolev norms are taken in a Euclidean way. For tensor fields, we use the square root of the sum of the squared component Sobolev norms, and pointwise norms are Frobenius norms. 

\subsection{The harmonic gauge}\label{sec2a} 
The idea to use harmonic coordinates was suggested by John Lee to the authors of \cite{uhl_syl}; harmonic coordinates were also used in \cite{SU-JFA}.  
Harmonic coordinates were also used in the Poisson embedding approach in \cite{LLS-Poisson20}. In two dimensions, they were also used in \cite{AlessandriniCabib07}, and in \cite{DDO13}.

In harmonic coordinates $y=(y^1,\dots,y^n)$, the Laplacian has no first-order terms. Indeed, write $\overline{g}=y_*g$ for the metric in these coordinates. The equations $\Delta_g y^j=0$ imply $\partial_{y^i}\big(\sqrt{\det\overline{g}}\,\overline{g}^{ij}\big)=0$, and therefore
\[
\Delta_{\overline{g}}
=\frac{1}{\sqrt{\det\overline{g}}}\partial_{y^i}\big(\sqrt{\det\overline{g}}\, \overline{g}^{ij}\partial_{y^j}\big)
=\overline{g}^{ij}\partial_{y^i}\partial_{y^j}.
\]
We next construct such coordinates globally and establish their compatibility with the boundary jets.

\begin{lemma} \label{lemma1}
Let $y$ be the harmonic extension map defined in Theorem~\ref{thm1}. For $\varepsilon_0>0$ sufficiently small, $y\colon M\to M$ is a  diffeomorphism fixing $\bo$ pointwise, and
\[
\|y-\Id\|_{H^{s+1}(M)}+\|y_*g-\e\|_{H^s(M)}
\le C\|h\|_{H^s(M)}.
\]
Here $C$ depends  on $M$ and $s$ only.
\end{lemma}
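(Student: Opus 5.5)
The plan is to treat $y-\Id$ as the solution of a perturbed Dirichlet problem and use elliptic estimates together with the algebra property of $H^s$, $s>n/2$. Write $w^j=y^j-x^j$. Since $\Delta_\e x^j=0$, the condition $\Delta_g y^j=0$ becomes
\[
\Delta_g w^j=-\Delta_g x^j=-\frac{1}{\sqrt{\det g}}\,\partial_i\big(\sqrt{\det g}\,g^{ij}\big),\qquad w^j|_\bo=0 .
\]
The right-hand side is $\partial(F(h))$, where $F$ is smooth with $F(0)=0$. Because $H^s$ is an algebra and is stable under smooth functions (Moser estimates) for $s>n/2$, we get $\|F(h)\|_{H^s}\le C\|h\|_{H^s}$ whenever $\|h\|_{H^s}\le\varepsilon_0\le1$. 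Hence the right-hand side has $H^{s-1}$ norm at most $C\|h\|_{H^s}$.

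Next I need an elliptic estimate for $\Delta_g$ with constants uniform in $g$. Write $\Delta_g=\Delta_\e+P_h$. Here $P_h$ is a second-order operator whose coefficients $a^{ij}(h)\in H^s$ and $b^i(h)\in H^{s-1}$ are $O(\|h\|_{H^s})$. Since $s-1>n/2$, multiplication by an $H^{s-1}$ function is bounded on $H^{s-1}$, so
\[
\|P_h u\|_{H^{s-1}}\le C\|h\|_{H^s}\|u\|_{H^{s+1}}.
\]
The Dirichlet Laplacian $\Delta_\e\colon H^{s+1}\cap H^1_0\to H^{s-1}$ is an isomorphism on the smooth domain $M$. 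A Neumann series then shows that $\Delta_g$ is an isomorphism with uniformly bounded inverse for $\varepsilon_0$ small. This gives
\[
\|y-\Id\|_{H^{s+1}}\le C\|h\|_{H^s}.
\]
Since $s+1>n/2+2$, Sobolev embedding gives $\|y-\Id\|_{C^1}\le C\varepsilon_0$ (in fact $C^2$).

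For the diffeomorphism property, $\|d y-\Id\|_{L^\infty}$ is small, so $y$ is a local diffeomorphism. The main obstacle is global injectivity and the claim that $y(M)=M$. I would extend $y-\Id$ to a $C^1$-small map on $\R^n$ using a bounded extension operator. The extended map $\tilde y$ is then a global diffeomorphism of $\R^n$: it is injective because $|\tilde y(a)-\tilde y(b)-(a-b)|\le \tfrac12|a-b|$ on a convex ball containing $M$ (and is the identity far away), and it is proper. Because $y=\Id$ on $\bo$, the image $\tilde y(\Mint)$ is open, and its boundary is contained in $\tilde y(\bo)=\bo$. A standard connectedness/degree argument (degree $1$ on each component, since $y|_\bo=\Id$) then gives $y(M)=M$. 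Alternatively one can argue directly: $y\colon M\to M$ is a local diffeomorphism with $y|_\bo=\Id$, so it is a covering map of degree one. That $y$ maps $M$ into $M$ needs checking, for example by noting that the image cannot cross $\bo$ because $y^{-1}(\bo)\supset\bo$ and local invertibility near $\bo$ holds. Smoothness of $y$ follows from elliptic regularity, since $g$ is smooth.

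Finally, $\overline g=y_*g=(dy^{-1})^T g\,(dy^{-1})\circ y^{-1}$. Here $dy-\Id\in H^s$ is small, and the inverse matrix is a smooth function of it, so $\|dy^{-1}\circ y-\Id\|_{H^s}\le C\|h\|_{H^s}$. Composition with the $C^1$-close diffeomorphism $y^{-1}$ preserves $H^s$ norms up to a constant, which needs $y^{-1}\in H^{s+1}$ with controlled norm; this follows from the inverse function theorem in $H^{s+1}$, $s+1>n/2+1$. Expanding $\overline g-\e$ into products of $h$, $dy-\Id$ and their inverses, and using the algebra property again, gives $\|\overline g-\e\|_{H^s}\le C\|h\|_{H^s}$.
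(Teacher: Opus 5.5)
Your proof is correct. The two quantitative parts are essentially the paper's, and the difference lies in how you prove that $y$ is a diffeomorphism of $M$.

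\textbf{Estimates.} The paper writes the equation for $w$ in divergence form, $\Delta w^j=-\partial_i(A^{ij}-\delta^{ij})-\partial_i\bigl((A^{ik}-\delta^{ik})\partial_kw^j\bigr)$, and absorbs, so only $A-\Id\in H^s$ and the algebra property enter. Your Neumann series for $\Delta_g$ around the Dirichlet Laplacian also needs the first-order coefficients in $H^{s-1}$, which is harmless since $s-1>n/2$. Your bound for $y_*g$ via inversion and composition estimates is the same as the paper's.

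\textbf{Diffeomorphism step.} The paper never leaves $M$. Since $w|_{\bo}=0$ and $\|Dw\|_{L^\infty}\le\frac12$, integrating along a segment to a nearest boundary point gives $|w(x)|\le\frac12\dist(x,\bo)$. This yields $y(M)\subset M$ at once, which is exactly what your covering-map alternative leaves unchecked, so that alternative is not a proof as written. Bijectivity then follows from Banach's fixed-point theorem for $x\mapsto z-w(x)$ on $\overline{B(z,\dist(z,\bo))}$, a ball that contains every preimage of $z$.

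Your route extends $w$ to $\R^n$, obtains a bi-Lipschitz diffeomorphism $\tilde y$ of $\R^n$, and identifies the image topologically. The fact $\partial\tilde y(\Mint)=\bo$ alone does not finish this. For a spherical shell $M$, both $\Mint$ and $\Mint$ together with the inner hole are bounded open sets with boundary $\bo$.

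What closes the argument is the degree. For $z\notin\bo$, $\deg(y,\Mint,z)=\deg(\Id,\Mint,z)$ because the boundary values agree. Since $\det Dy>0$ everywhere, the degree counts preimages, giving exactly one preimage for $z\in\Mint$ and none for $z\notin M$. The open mapping property then rules out interior points mapping onto $\bo$. With this made explicit your proof is complete, and the extension to $\R^n$ becomes superfluous.

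\textbf{Comparison.} Your argument is softer and applies to any local diffeomorphism of $M$ with positive Jacobian that equals the identity on $\bo$. The paper's argument is elementary, needs neither an extension operator nor degree theory, and uses the smallness of $Dw$ directly.
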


\begin{proof}
Set $w^j(x)=y^j(x)-x^j$, $j=1,\dots,n$, and $A^{ij}=\sqrt{\det g}\,g^{ij}$, where $x$ are the original coordinates. The equation for $w^j$ is, with $\partial_i \colon= \partial_{x^i}$, 
\[
\Delta w^j=-\partial_i(A^{ij}-\delta^{ij})
-\partial_i\big((A^{ik}-\delta^{ik})\partial_k w^j\big),
\qquad w^j|_\bo=0.
\]
A standard estimate for the Dirichlet realization of $\Delta$, combined with the fact that $H^s(M)$ is an algebra for $s>n/2$, yields
\[
\|w\|_{H^{s+1}(M)}
\le C\|A-\Id\|_{H^s(M)}\big(1+\|w\|_{H^{s+1}(M)}\big).
\]
We have $\|A-\Id\|_{H^s(M)}\le C\|h\|_{H^s(M)}$ with $C$ depending only on $M$ and $s$, uniformly for $\varepsilon_0$ sufficiently small. By the smallness assumption in \r{1a}, choosing $\varepsilon_0$ sufficiently small allows us to absorb $w$ on the right and prove the estimate for it. Decreasing $\varepsilon_0$ further if necessary, we get  $\|D w\|_{L^\infty(M)}\le 1/2$ by  Sobolev embedding.

We will show that $x\mapsto y$ is a diffeomorphism on $M$ now. 
Set $d(x)=\dist(x,\bo)$. Integrating $Dw$ along a segment to a nearest boundary point yields $|w(x)|\le \frac12 d(x)$, thus $y(x)=x+w(x)$ maps $M$ into itself. To prove surjectivity, for $z\in\Mint$, consider the map $T_z(x)=z-w(x)$. We want to solve $x=T_z(x)$,  equivalent to $x=z-w(x)$, uniquely. The map $T_z$ is a contraction on the closed ball $\overline{B(z,d(z))}\subset M$, mapping that ball into itself because
\[
|T_z(x)-z|=|w(x)|\le \frac12 d(x)\le \frac12\big(d(z)+|x-z|\big)
\le d(z).
\]
Moreover, every solution of $y(x)=z$ lies in this ball, since
\[
|x-z|=|w(x)|\le \frac12 \bigl(d(z)+|x-z|\bigr)
\quad\Longrightarrow\quad
|x-z|\le d(z).
\]
Banach's contraction theorem therefore implies that   every interior point has exactly one preimage. Together with the condition $y|_\bo=\Id$, this proves that $x\mapsto y$ is a bijection. Since $y$ is smooth up to the boundary, and $Dy=\Id+Dw$ is everywhere invertible, the inverse function theorem, including at boundary points, shows that $y^{-1}$ is smooth up to the boundary. Thus $y$ is a  diffeomorphism of $M$ onto itself.

Since $y$ is close to $\Id$ in $H^{s+1}(M)$ and $\det(Dy)\ge2^{-n}$, the Sobolev inversion, composition, and product estimates in \cite{IKT13} yield
\[
\|y_*g-\e\|_{H^s(M)}
\le C\bigl(\|h\|_{H^s(M)}+\|w\|_{H^{s+1}(M)}\bigr)
\le C\|h\|_{H^s(M)},
\]
with $C$ depending only on $M,s$.
\end{proof}

Set $\overline{g}=y_*g$. In the next lemma, ``flat'' means zero jet at $\bo$, not to be confused with zero curvature.

\begin{lemma}\label{lemma2}
The metric perturbation $\overline{g}-\e$ is flat at $\bo$.
\end{lemma}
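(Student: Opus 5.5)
We plan to derive Lemma~\ref{lemma2} from boundary determination of Lee--Uhlmann \cite{LeeU}, combined with the diffeomorphism invariance of the DN map. First, since $y$ fixes $\bo$ pointwise, we have $\Lambda_{\overline g}=\Lambda_g$: a function $u$ is $g$-harmonic if and only if $u\circ y^{-1}$ is $\overline g$-harmonic, the two have the same boundary values, and the normal derivatives (as boundary densities, or equivalently via the Dirichlet energy $\int |du|^2_g\,dV_g$) coincide. Hence $\Lambda_{\overline g}=\Lambda_\e$ by \r{1a}. The result of \cite{LeeU} states that the DN map determines the full Taylor series of the metric at $\bo$ in boundary normal coordinates. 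Therefore there is a diffeomorphism $\psi$, defined near $\bo$ and fixing $\bo$ pointwise, such that $\psi^*\overline g-\e$ vanishes to infinite order at $\bo$. Here $\psi$ is the composition of the boundary normal coordinate maps of $\e$ and $\overline g$; it is smooth, and its jet at $\bo$ is determined by the jet of $\overline g$.

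The key step is to show that $\psi$ has the same jet as $\Id$ at $\bo$. For this we use the harmonic gauge. The coordinate functions $y^j$ satisfy $\Delta_{\overline g}y^j=0$ in the $y$-coordinates, with $y^j|_\bo=x^j|_\bo$. Pulling back by $\psi$, the functions $\phi^j=y^j\circ\psi$ satisfy $\Delta_{\psi^*\overline g}\phi^j=0$ near $\bo$, with $\phi^j|_\bo=x^j$. Since $\psi^*\overline g=\e+O(d^\infty)$, we get $\Delta\phi^j=O(d^\infty)$. We then need the full normal jet of $\phi^j$ at $\bo$. Its first normal derivative is $\Lambda_{\overline g}(x^j)$, which equals $\Lambda_\e(x^j)=\partial_\nu x^j$, because $\psi$ preserves the normal derivative to first order. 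The higher normal derivatives are then determined recursively from the equation, since $\partial_\nu^2$ is expressed through tangential derivatives and lower-order normal derivatives, with $O(d^\infty)$ errors. The Euclidean function $x^j$ satisfies the same recursion with the same Cauchy data. Hence $\phi^j-x^j=O(d^\infty)$, so $\psi=\Id+O(d^\infty)$ at $\bo$.

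Consequently $\overline g=(\psi^{-1})^*(\e+O(d^\infty))=\e+O(d^\infty)$, which is the claim. We expect the main obstacle to be the matching of the first-order Cauchy data. Rather than using a pointwise identity, it is more robust to use the invariance of the DN map, which gives $\Lambda_{\psi^*\overline g}=\Lambda_\e$. We then apply the jet identification of $\Lambda$ as a pseudodifferential operator from \cite{LeeU}. Since $\psi^*\overline g-\e$ is flat at $\bo$, the first normal derivative of $\phi^j$ equals $\Lambda_\e x^j+$ a smoothing correction. That correction is actually zero here, because $\phi^j$ is genuinely harmonic for $\psi^*\overline g$ near $\bo$ and its normal derivative is given exactly by $\Lambda_{\psi^*\overline g}$. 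Finally, smoothness of all objects up to $\bo$ is guaranteed by the smoothness of $g$ and of $y$, from Lemma~\ref{lemma1}.
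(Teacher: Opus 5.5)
Your proof is correct and is essentially the paper's argument. Your $\psi$, obtained from Lee--Uhlmann applied to $\overline g$ via $\Lambda_{\overline g}=\Lambda_g$, plays the role of the paper's $Y=y\circ\psi$, and the induction with zero Cauchy data showing that $\phi^j-x^j$ is flat is the same step. One remark: since $\psi$ is an isometry fixing $\bo$ pointwise, the pointwise identity $\partial_{\nu_{\psi^*\overline g}}\phi^j=\partial_{\nu_{\overline g}}y^j=\Lambda_{\overline g}(x^j|_{\bo})$ already gives the first-order data. The detour in your last paragraph through a DN map for $\psi^*\overline g$ is therefore unnecessary, and as written it needs a global extension of $\psi$, which is only defined on a collar.
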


\begin{proof}
We use the boundary recovery result  \cite[Propositions~1.1--1.2]{LeeU}. The  DN map there $f\mapsto(\Lambda_gf)\, \d S_g$ maps functions to forms. 
It implies that the principal symbol of $\Lambda_g$ with our definition is $\sigma_1(\Lambda_g)(x',\xi')=|\xi'|_{g|_{T\bo}}$. Thus $\Lambda_g=\Lambda_{g_0}$ implies $g|_{T\bo}=g_0|_{T\bo}$. This allows us to remove the determinant factor incorporated in $\d S_g$. 
The smooth boundary determination result in \cite[Proposition~1.3]{LeeU} therefore recovers the full boundary jet of the metric in boundary normal coordinates from our DN map. Note that even though \cite{LeeU} is about analytic metrics, the proof there requires no analyticity assumptions. 
 By the DN equality in \r{1a}, there is therefore a diffeomorphism $\psi$ defined in a boundary collar, fixing $\bo$ pointwise, such that $\widehat g\colon=\psi^*g$ and $\e$ have the same boundary jets.  
Set $Y=y\circ\psi$ in the collar. Clearly, $\Delta_{\widehat g}Y^j=(\Delta_g y^j)\circ\psi=0$, and having the same DN maps leads to
\[
\partial_{\nu_{\widehat g}}Y^j=\partial_{\nu_g}y^j=\Lambda_g(x^j|_\bo)
=\Lambda_\e(x^j|_\bo) =\partial_{\nu_\e}x^j \quad\text{on $\bo$}.
\]
Since $\widehat g=\e$ at $\bo$, the unit normals of those two metrics agree there. Thus $w^j\colon=Y^j-x^j$ satisfies
\begin{equation}   \label{2}
\Delta_{\widehat g}w^j=(\Delta_\e-\Delta_{\widehat g})x^j
\quad\text{in the collar},\qquad
w^j|_\bo=\partial_{\nu_{\widehat g}} w^j|_\bo=0.
\end{equation}
The r.h.s.\ of the PDE in \r{2} is flat at $\bo$. In Euclidean boundary normal coordinates, the vanishing of $w^j$ and its first normal derivative implies the vanishing of all their tangential derivatives at the boundary. Problem \r{2} is non-characteristic, so the equation implies that the second normal derivative also vanishes. Repeated normal differentiation of the equation then shows inductively that all higher normal derivatives vanish: at each step, the remaining terms involve only tangential derivatives of normal derivatives already known to vanish. Thus $w^j$ is flat at $\bo$ for every $j$, and $Y-\Id$ has zero boundary jet.

Near $\bo$, we can pass back to the harmonic coordinates $y$ to get  $Y_*\widehat g=y_*g=\overline{g}$. Since $\widehat g-\e$ and $Y-\Id$ are flat at $\bo$, so is $\overline{g}-\e$. 
\end{proof}

\begin{lemma}\label{lemma3}
The metric
\[
\tilde g=
\begin{cases}
\overline{g}=y_*g,&\text{in }M,\\
\e,&\text{in }\R^n\setminus M
\end{cases}
\]
is smooth on $\R^n$, and its Cartesian coordinate functions are $\tilde g$-harmonic.
\end{lemma}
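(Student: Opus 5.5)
The plan is to combine Lemma~\ref{lemma1} and Lemma~\ref{lemma2}. Smoothness of $\tilde g$ is a local statement, and only points of $\bo$ need attention. By Lemma~\ref{lemma1}, $y\colon M\to M$ is a smooth diffeomorphism up to the boundary. Hence $\overline{g}=y_*g=(y^{-1})^*g$ is a smooth metric on $M$, including at $\bo$. Lemma~\ref{lemma2} says that $\overline{g}-\e$ is flat at $\bo$, i.e., all its derivatives of every order vanish on $\bo$. Then the function equal to $\overline{g}-\e$ in $M$ and $0$ in $\R^n\setminus M$ is $C^\infty$ on $\R^n$. Indeed, all one-sided derivatives from both sides match (all zero) on the smooth hypersurface $\bo$. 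The extension is therefore $C^k$ for every $k$, by the standard argument that a function which is $C^k$ up to the boundary on each side of a smooth hypersurface, with matching jets of order $k$, is $C^k$. So $\tilde g=\e+(\text{that function})$ is smooth. It is positive definite everywhere because it equals a Riemannian metric on each piece.

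For harmonicity of the Cartesian coordinates $y^j$, I would use the divergence form
\[
\Delta_{\tilde g} y^j=\frac{1}{\sqrt{\det\tilde g}}\,\partial_{y^i}\big(\sqrt{\det\tilde g}\,\tilde g^{ij}\big).
\]
In $\R^n\setminus M$ this vanishes trivially, since $\tilde g=\e$ there. In $\Mint$, the coordinate function $y^j$ is the push-forward of the function $y^j$ on $(M,g)$, which is $g$-harmonic by the definition in Theorem~\ref{thm1}. Since the Laplacian is invariant under isometries, $\Delta_{\overline{g}}y^j=(\Delta_g y^j)\circ y^{-1}=0$, which is the computation preceding Lemma~\ref{lemma1}. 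Thus $\partial_{y^i}(\sqrt{\det\tilde g}\,\tilde g^{ij})=0$ on $\R^n\setminus\bo$. This expression is smooth (in particular continuous) on $\R^n$ by the first step. Since $\R^n\setminus\bo$ is dense, it vanishes on $\bo$ as well.

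The only delicate point is the gluing step: one must ensure that flatness in Lemma~\ref{lemma2} refers to derivatives in the $y$-coordinates, i.e., the Cartesian coordinates in which $\tilde g$ is written. This is the case, since $\overline{g}$ is expressed in those coordinates and $y$ fixes $\bo$ pointwise, so the image domain is still $M$ with the same boundary. Given this, the smooth matching is routine, for instance via Whitney extension or via Taylor's formula with integral remainder along normal lines in a collar.
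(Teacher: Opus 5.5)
Your proposal is correct and follows the paper's proof: smoothness of the glued metric comes from the flatness in Lemma~\ref{lemma2}, with Lemma~\ref{lemma1} giving smoothness of $\overline{g}$ up to $\bo$. Harmonicity follows from $\Delta_{\overline{g}}x^j=(\Delta_g y^j)\circ y^{-1}=0$ in $M$, holds trivially outside $M$, and extends across $\bo$ by continuity. The ``delicate point'' you flag is harmless, since vanishing of the full jet of a tensor at $\bo$ does not depend on the choice of smooth coordinates.
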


\begin{proof}
Smoothness follows from Lemma~\ref{lemma2}.  Next, 
\[
\Delta_{\overline{g}}x^j=(\Delta_g y^j)\circ y^{-1}=0
\quad\text{in }M.
\]
Outside $M$, the metric is Euclidean, so $\Delta_{\tilde g}x^j=0$ there as well. Since $\tilde g$ and $x^j$ are smooth on $\R^n$, this identity holds across $\bo$.
\end{proof}

\subsection{Main identity}\label{sec2b}
From now on, we denote $\tilde g$ by $g$. We have $g$ defined in $\R^n$ now, with its Euclidean coordinates $g$-harmonic. Since $y$ fixes $\bo$ pointwise, the DN equality in \r{1a} continues to hold for this metric. The smallness assumption in \r{1a} refers to the original perturbation $h$. We use the notation
\begin{equation}   \label{2a}
A^{ij}= \sqrt{\det g} g^{ij}, \quad B^{ij}= A^{ij}-\delta^{ij}
\end{equation}
introduced earlier, now used throughout $\R^n$. 
Then  we have in $\R^n$, 
\begin{equation}   \label{3}
\partial_i B^{ij}=0, \quad \supp B\subset M.
\end{equation}
Also, 
\begin{equation}   \label{4}
 \sqrt{\det g} \,\Delta_g = \Delta + B^{ij}\partial_i \partial_j. 
\end{equation}
Integrating by parts and using the DN equality in \r{1a}, we get Alessandrini's identity \cite{Alessandrini88} in this case:
\begin{equation}   \label{5}
\int B^{ij} \partial_i u_g \partial_j v_\e\,\d x=0
\end{equation}
for all $g$-harmonic $u_g$  and $\e$-harmonic  $v_\e$, both in $C^1(M)$. By \r{3}, the integral in \r{5} extends to $\R^n$. 

Since $B$ is divergence-free, an integration by parts in \r{5} yields
\begin{equation}   \label{6}
\int_M B^{ij} u_g\partial_i  \partial_j v_\e\,\d x=0.
\end{equation}
Plugging the exponential solutions \r{1} for $v_\e$, we get
\begin{equation}   \label{7}
\int_M e^{x\cdot\zeta} B^{ij}(x) \zeta_i \zeta_j  u_g(x)\,\d x=0, \quad \zeta\cdot\zeta=0,
\end{equation}
for every $C^1(M)$ $g$-harmonic function $u_g$. This equality is our main orthogonal identity which we will use to prove that $B=0$. Once we do that, taking determinants in $\sqrt{\det g} g^{-1}=\Id$ yields
\[
1=\det\bigl(\sqrt{\det g}\,g^{-1}\bigr)=(\det g)^{(n-2)/2}.
\]
Since $n\ge3$, it follows that $\det g=1$, and then $g=\e$.

\subsection{Construction of C.G.O.-like solutions related to $g$} We use the C.G.O.\ ansatz as in \cite{SylvesterU87}. As explained in the Introduction, we construct solutions with frequencies bounded by $C/\|B\|_{L^\infty}$, assuming $B\not\equiv0$. To reiterate, they \textit{are not} extensions of the  C.G.O.\ solutions in \cite{SylvesterU87}  to anisotropic metrics. 

For $\zeta\not=0$ such that $\zeta\cdot\zeta=0$, write, in ``polar coordinates,'' 
\be{7a}
\zeta=\tau(a+\i b)\quad  \text{with $a\perp b$ real and unit, and $\tau>0$.} 
\ee
 Conjugate $\Delta_\e$ with $e^{\zeta\cdot x}$ to get 
\[
P_\zeta\colon= e^{-\zeta\cdot x}  \Delta_\e e^{\zeta\cdot x} = \Delta_\e+2\zeta\cdot\nabla .
\]
We write $\langle\tau\rangle=(1+\tau^2)^{1/2}$, $\langle\zeta \rangle=(1+|\zeta|^2)^{1/2}$ in what follows. 

\begin{lemma}\label{lemma4} 
For $v\in H^2(\R^n)$ supported in a fixed ball, and for every $\tau>0$, we have
\begin{equation}   \label{10}
\|v\|_{L^2}+\langle\tau\rangle^{-1}\|\nabla v\|_{L^2}
+\langle\tau\rangle^{-2}\|\nabla^2v\|_{L^2}
\le C\langle\tau\rangle^{-1}\|P_\zeta v\|_{L^2}.
\end{equation}
All norms are over $\R^n$, and $C$ depends only on the fixed ball and $n$.
In particular,
\[
\|v\|_{H^2}\le C\|P_\zeta v\|_{L^2},\qquad 0<\tau\le1.
\]
\end{lemma}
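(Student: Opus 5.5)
The plan is to prove the zeroth-order bound $\|v\|_{L^2}\le C\langle\tau\rangle^{-1}\|P_\zeta v\|_{L^2}$ first. The gradient and Hessian bounds then follow by elliptic bookkeeping. I will treat $0<\tau\le1$ and $\tau\ge1$ separately, and throughout $v$ is supported in a fixed ball $B_R$.

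\emph{Small $\tau$.} Set $w=e^{\zeta\cdot x}v$, which is again supported in $B_R$ and satisfies $\Delta w=e^{\zeta\cdot x}P_\zeta v$. For $\tau\le1$ we have $|e^{\zeta\cdot x}|=e^{\tau a\cdot x}\in[e^{-R},e^{R}]$ on $B_R$, so it suffices to show $\|w\|\le C\|\Delta w\|$. Integrating by parts gives $\|\nabla w\|^2=-\Re\langle\Delta w,w\rangle\le\|\Delta w\|\,\|w\|$. The Poincaré inequality on $B_R$ (valid since $w$ vanishes near $\partial B_R$) gives $\|w\|\le C\|\nabla w\|$. Combining the two yields $\|w\|\le C^2\|\Delta w\|$, hence $\|v\|\le C\|P_\zeta v\|$ for $0<\tau\le1$.

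\emph{Large $\tau$.} For $\tau\ge1$ I would invoke the standard Sylvester--Uhlmann estimate $\|v\|_{L^2}\le C\tau^{-1}\|P_\zeta v\|_{L^2}$ for $v$ supported in a fixed ball. It can be proved by a Hähner-type Fourier argument: the symbol of $P_\zeta$ is $p(\xi)=-|\xi|^2-2\tau b\cdot\xi+2\i\tau a\cdot\xi$. One works on a torus containing $B_R$ with frequencies shifted by a half-lattice vector in the $a$ direction, so that $|a\cdot\xi|\ge\pi/(2L)$ and hence $|p|\ge c\tau$ on the lattice.

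Alternatively, one can use a Carleman estimate for the weight $\varphi=a\cdot x$ (a limiting Carleman weight) with the convexification $\varphi+\varphi^2/(2\tau R)$, integrating by parts in $\|e^{\tau\varphi}\Delta e^{-\tau\varphi}u\|^2$. I expect this step to be the main obstacle: the constant must depend only on $R$ and $n$, uniformly in the directions $a,b$. I would try the periodic Fourier argument first, since it gives this uniformity directly by rotation invariance.

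\emph{Derivatives.} For compactly supported $v$, Plancherel gives $\|\nabla^2v\|=\|\Delta v\|$ and $\|\nabla v\|^2=-\Re\langle\Delta v,v\rangle\le\|\Delta v\|\,\|v\|$. Write $\Delta v=P_\zeta v-2\zeta\cdot\nabla v$ with $|\zeta|=\sqrt2\tau$. Set $X=\|\nabla v\|$ and $K=C\langle\tau\rangle^{-1}\|P_\zeta v\|\ge\|v\|$. Then
\[
X^2\le\bigl(\|P_\zeta v\|+2\sqrt2\,\tau X\bigr)K ,
\]
which gives $X\le C\bigl(\tau K+\sqrt{\|P_\zeta v\|K}\bigr)\le C\|P_\zeta v\|$ in both regimes of $\tau$. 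This is exactly $\langle\tau\rangle^{-1}\|\nabla v\|\le C\langle\tau\rangle^{-1}\|P_\zeta v\|$. Next,
\[
\|\nabla^2v\|\le\|P_\zeta v\|+2\sqrt2\,\tau X\le C\langle\tau\rangle\|P_\zeta v\|,
\]
so $\langle\tau\rangle^{-2}\|\nabla^2 v\|\le C\langle\tau\rangle^{-1}\|P_\zeta v\|$. Adding the three bounds gives \r{10}. For $0<\tau\le1$ we have $\langle\tau\rangle\le\sqrt2$, so the last claim $\|v\|_{H^2}\le C\|P_\zeta v\|_{L^2}$ follows immediately.
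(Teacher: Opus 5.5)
Your proof is correct. The small-$\tau$ step (conjugating back to $\Delta$ and using the Dirichlet Poincar\'e inequality on the ball) and the derivative bookkeeping are essentially the paper's. The only difference there is order: you bound $\|\nabla v\|$ first through a quadratic inequality, while the paper first absorbs in $\|\Delta v\|$.

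Where you genuinely differ is the key bound $\|v\|_{L^2}\le C\tau^{-1}\|P_\zeta v\|_{L^2}$.
\begin{itemize}
\item The paper conjugates by $e^{\i\tau b\cdot x}$ to get $\Delta+\tau^2+2\tau a\cdot\nabla$. The self-adjoint part $\Delta+\tau^2$ commutes with the skew-adjoint part $2\tau a\cdot\nabla$, so the cross term drops. The paper then discards $\|(\Delta+\tau^2)w\|^2$ and applies the one-dimensional Poincar\'e inequality along lines parallel to $a$.
\item You expand $v$ on a cube rotated to have an edge along $a$, using exponentials with frequencies in the half-shifted lattice. You then use $|p(\xi)|\ge|\mathrm{Im}\,p(\xi)|=2\tau|a\cdot\xi|\ge\pi\tau/L$.
\end{itemize}
These are the same mechanism seen on the two sides of the Fourier transform: both keep only the imaginary part of the symbol, and the lattice gap $|a\cdot\xi|\ge\pi/(2L)$ plays the role of the Poincar\'e inequality along lines. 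The paper's version needs no auxiliary cube, lattice, or rotation. Yours gives the explicit constant $L/\pi$. Like the paper's, it holds for every $\tau>0$, not only $\tau\ge1$.

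The one step you should write out is why $P_\zeta$ is diagonal in that basis. The shifted exponentials form an orthonormal basis of $L^2$ of the cube, since they are the standard ones times a unimodular factor. Because $v\in H^2$ is supported in the open cube, integrating $P_\zeta v$ against $e^{-\i\xi\cdot x}$ produces no boundary terms. The coefficients of $P_\zeta v$ are therefore $p(\xi)$ times those of $v$, and Parseval finishes.

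Your alternative via a convexified Carleman weight is unnecessary. With zero potential the linear weight already suffices, because the commutator vanishes; this is exactly the paper's argument.
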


\begin{proof}
Choose a fixed ball whose interior contains the prescribed support ball. It suffices to consider smooth $v$ compactly supported in this larger ball. We use the Carleman estimate in \cite[Theorem~4.1]{DosSantosKSU09}, see also \cite{Gunther-book} and \cite[Theorem~5.3 and its proof]{Salo08}, applied to $e^{\i\tau b\cdot x}v$ with  the linear Carleman weight $-a\cdot x$ and small parameter $h=\tau^{-1}$, to get
\begin{equation}   \label{10v}
\|v\|_{L^2}\le C\tau^{-1}\|P_\zeta v\|_{L^2}.
\end{equation}
Here the potential   is zero, so this estimate holds for every $\tau>0$. For completeness, we provide the short proof in this case following the proof of \cite[Theorem~5.3]{Salo08}. Set $w=e^{\i\tau b\cdot x}v$. Then
\[
e^{\i\tau b\cdot x}P_\zeta v
=(\Delta+\tau^2+2\tau a\cdot\nabla)w.
\]
The self-adjoint operator $\Delta+\tau^2$ commutes with the skew-adjoint operator $2\tau a\cdot \nabla$, thus integration by parts yields
\[
\|P_\zeta v\|_{L^2}^2=\|(\Delta+\tau^2)w\|_{L^2}^2+4\tau^2\|a\cdot \nabla w\|_{L^2}^2.
\]
The Poincar\'e inequality along lines parallel to $a$ gives $\|a\cdot\nabla w\|_{L^2}^2\ge C^{-1}\|w\|_{L^2}^2=C^{-1}\|v\|_{L^2}^2$, with $C$ depending only on the fixed ball. This proves \r{10v}.

The constant in \r{10v} is independent of the orthonormal pair $a,b$, by rotational invariance of the Euclidean Laplacian. 
For $0<\tau\le1$, set $u=e^{x\cdot\zeta}v$, so that
$\Delta u=e^{x\cdot\zeta}P_\zeta v$. The functions $e^{\pm x\cdot\zeta}$ are uniformly bounded on the chosen ball in this range. Let $\lambda_1>0$ be the first eigenvalue of the Dirichlet Laplacian $-\Delta$ on that ball. Since $u$ vanishes near its boundary, the spectral theorem yields
\[
\|v\|_{L^2}\le C\|u\|_{L^2}
\le C\lambda_1^{-1}\|\Delta u\|_{L^2}
\le C\|P_\zeta v\|_{L^2}.
\]
Combining the two bounds, we obtain, for all $\tau>0$,
\[
\|v\|_{L^2}\le C\langle\tau\rangle^{-1}\|P_\zeta v\|_{L^2}.
\]
Integration by parts or using the Fourier transform  yields
\begin{equation}   \label{10b}
\|\nabla v\|_{L^2}^2\le \|v\|_{L^2}\|\Delta v\|_{L^2},
\qquad
\|\nabla^2v\|_{L^2}=\|\Delta v\|_{L^2}.
\end{equation}
Since $\Delta v=P_\zeta v-2\zeta\cdot\nabla v$ and $|\zeta|=\sqrt2\tau$,  
\[
\begin{split}
\|\Delta v\|_{L^2}
&\le \|P_\zeta v\|_{L^2}+2\sqrt2\tau\|\nabla v\|_{L^2}\\
&\le \|P_\zeta v\|_{L^2}+\frac12\|\Delta v\|_{L^2}
+4\tau^2\|v\|_{L^2},
\end{split}
\]
where we used \r{10b} and the  inequality $ab\le (a^2+b^2)/2$. 
Absorbing the Laplacian term and using the $L^2$ estimate for $v$, we obtain, for all $\tau>0$,
\[
\|\nabla^2v\|_{L^2}=\|\Delta v\|_{L^2}
\le C\big(\|P_\zeta v\|_{L^2}+\tau^2\|v\|_{L^2}\big)
\le C\langle\tau\rangle\|P_\zeta v\|_{L^2}.
\]
Inequality \r{10b} then implies
\[
\|\nabla v\|_{L^2}^2
\le \|v\|_{L^2}\|\Delta v\|_{L^2}
\le C\|P_\zeta v\|_{L^2}^2.
\]
Combining these estimates proves \r{10}.
\end{proof}

We construct our $g$-harmonic solutions now. We continue to use the convention $|\zeta|= \sqrt2 \tau$.

\begin{proposition}\label{pr1}
For $\zeta$ such that $\zeta\cdot\zeta=0$,  let $u$ solve
\begin{equation}   \label{8}
\Delta_g u=0\quad \text{in $M$}, \qquad u|_{\bo}=e^{x\cdot\zeta}. 
\end{equation}
Then there exist constants $C_0>0$, $C>0$, depending only on $M$ and $n$, so that $u=e^{x\cdot\zeta}(1+r)$ with
\begin{equation}   \label{9}
\|r\|_{L^2(M)}+\langle\tau\rangle^{-1}\|\nabla r\|_{L^2(M)}
+\langle\tau\rangle^{-2}\|\nabla^2r\|_{L^2(M)}
\le C\frac{\tau^2}{\langle\tau\rangle}\|B\|_{L^2(M)}
\end{equation}
when 
\[
\|B\|_{L^\infty}<C_0,\qquad
0<\tau<C_0/\|B\|_{L^\infty}.
\]
Moreover, extending $u$ by $e^{x\cdot\zeta}$ outside $M$ results in a smooth $g$-harmonic function $U$ on $\R^n$. The corresponding zero extension of $r$ satisfies $r=e^{-x\cdot\zeta}U-1\in C_0^\infty(\R^n)$ and $\supp r\subset M$.
\end{proposition}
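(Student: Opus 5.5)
We will work on all of $\R^n$, where $g$ is the extended metric of Lemma~\ref{lemma3}. The idea is to build the global $g$-harmonic function directly and only afterwards identify it with the solution of \r{8}.

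\textbf{Reduction to a conjugated equation.} We look for $U=e^{x\cdot\zeta}(1+r)$ with $r$ supported in $M$. By \r{4}, the equation $\Delta_gU=0$ is equivalent to
\[
(\Delta+B^{ij}\partial_i\partial_j)\bigl(e^{x\cdot\zeta}(1+r)\bigr)=0 .
\]
Conjugating by $e^{x\cdot\zeta}$ and using $\zeta\cdot\zeta=0$ turns this into
\[
P_\zeta r+B^{ij}\bigl(\partial_i\partial_j r+2\zeta_i\partial_j r+\zeta_i\zeta_j r\bigr)=-B^{ij}\zeta_i\zeta_j .
\]
We write this as $(P_\zeta+Q_\zeta)r=F$, where $F=-B^{ij}\zeta_i\zeta_j$. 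The right-hand side satisfies $\|F\|_{L^2}\le 2\tau^2\|B\|_{L^2}$, since $|\zeta|^2=2\tau^2$.

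\textbf{Solving with a bounded inverse.} Lemma~\ref{lemma4} gives only an a priori estimate, so we need an actual inverse of $P_\zeta$ with the weighted bound \r{10}.
\begin{itemize}
\item For $\tau\ge1$, take the Sylvester--Uhlmann/Faddeev-type inverse $G_\zeta$ on weighted spaces, or, more simply, the solution operator obtained by duality from the Carleman estimate \r{10v} (Hahn--Banach applied to $P_{-\zeta}=P_\zeta^*$ on a fixed large ball).
\item For $\tau\le1$, use the Dirichlet Laplacian on that ball after conjugation.
\end{itemize}
In either case the solution $r$ is only defined on a ball $D\supset M$, not with compact support. So we cannot directly apply Lemma~\ref{lemma4} with compact support; we first get the bounds \r{10} for $r$ on $D$.

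Define the norm
\[
\|r\|_X=\|r\|_{L^2}+\langle\tau\rangle^{-1}\|\nabla r\|_{L^2}+\langle\tau\rangle^{-2}\|\nabla^2r\|_{L^2}.
\]
Pointwise we have
\[
|Q_\zeta r|\le C\|B\|_{L^\infty}\bigl(|\nabla^2r|+\tau|\nabla r|+\tau^2|r|\bigr),
\]
and each term on the right is at most $C\|B\|_{L^\infty}\langle\tau\rangle^2$ times the corresponding summand of $\|r\|_X$. Hence
\[
\|G_\zeta Q_\zeta r\|_X\le C\langle\tau\rangle^{-1}\|B\|_{L^\infty}\langle\tau\rangle^2\|r\|_X=C\|B\|_{L^\infty}\langle\tau\rangle\|r\|_X .
\]
Under the hypotheses $\|B\|_{L^\infty}<C_0$ and $\tau<C_0/\|B\|_{L^\infty}$, the factor $C\|B\|_{L^\infty}\langle\tau\rangle$ is at most $1/2$ once $C_0$ is small. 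A Neumann series then solves the equation, with
\[
\|r\|_X\le 2C\langle\tau\rangle^{-1}\|F\|_{L^2}\le C\frac{\tau^2}{\langle\tau\rangle}\|B\|_{L^2},
\]
which is \r{9}. Elliptic regularity gives smoothness of $r$, since $B$ is smooth and $P_\zeta+Q_\zeta$ is elliptic when $\|B\|_{L^\infty}$ is small.

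\textbf{Identification with \r{8} and support of $r$.} This is the main obstacle: the $r$ just constructed need not vanish outside $M$. I would instead define $u$ by \r{8}, set $r=e^{-x\cdot\zeta}u-1$ on $M$, and extend $U$ by $e^{x\cdot\zeta}$ outside $M$.

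To see that $U$ is smooth, use the DN equality. Since $g=\e$ outside $M$, the function $e^{x\cdot\zeta}$ is $g$-harmonic there. On $\bo$ the Dirichlet data of $u$ and $e^{x\cdot\zeta}$ agree, and
\[
\partial_\nu u=\Lambda_g\bigl(e^{x\cdot\zeta}\bigr)=\Lambda_\e\bigl(e^{x\cdot\zeta}\bigr)=\partial_\nu e^{x\cdot\zeta} .
\]
Also $g=\e$ to infinite order at $\bo$ by Lemma~\ref{lemma2}. The non-characteristic induction from the proof of Lemma~\ref{lemma2} then shows that $u-e^{x\cdot\zeta}$ is flat at $\bo$. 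Hence $U$ is smooth and $g$-harmonic on $\R^n$, and $r\in C_0^\infty$ with $\supp r\subset M$.

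The zero-extended $r$ satisfies $(P_\zeta+Q_\zeta)r=F$ on $\R^n$ with compact support in a fixed ball. Now Lemma~\ref{lemma4} applies directly, giving
\[
\|r\|_X\le C\langle\tau\rangle^{-1}\bigl(\|F\|_{L^2}+\|Q_\zeta r\|_{L^2}\bigr)\le C\langle\tau\rangle^{-1}\|F\|_{L^2}+\tfrac12\|r\|_X .
\]
Absorbing the last term yields \r{9} a priori. No inverse of $P_\zeta$ is needed at all, only Lemma~\ref{lemma4} and the DN equality.
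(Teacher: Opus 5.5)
Your final argument is correct and is essentially the paper's proof: you glue $u$ to $e^{x\cdot\zeta}$ across $\partial M$ using the DN equality, so $r=e^{-x\cdot\zeta}U-1$ is smooth and supported in $M$, and then apply Lemma~\ref{lemma4} to the conjugated equation and absorb the $B$-terms using $\langle\tau\rangle\|B\|_{L^\infty}<2C_0$. The differences are minor: you get smoothness of $U$ from the flatness induction of Lemma~\ref{lemma2} rather than from matching conormal fluxes plus elliptic regularity, and your middle Neumann-series construction is unnecessary and can simply be dropped, since its claimed weighted $H^2$ bound for an inverse of $P_\zeta$ would need extra justification.
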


\begin{proof}
Set $E(x)=e^{x\cdot\zeta}$, so that $\Delta E=0$. Since $g=\e$ on $\bo$, the unit normals for the two metrics agree there. By the DN equality in \r{1a}, we get 
\[
\partial_\nu u=\Lambda_g(E|_\bo)
=\Lambda_\e(E|_\bo)=\partial_{\nu_\e} E
\quad\text{on $\bo$}.
\]
Extend $u$ to $U$ on $\R^n$ by setting $U=E$ outside $M$. The conormal flux $\sqrt{\det g}\,g^{ij}\nu_i\partial_j u$ equals $\partial_\nu u$ on $\bo$, since $g=\e$ there. Thus the boundary values and conormal fluxes match across $\bo$, so integration by parts shows that $\Delta_gU=0$ in the distributional sense in $\R^n$. Elliptic regularity implies that $U$ is smooth. Therefore
\[
r=e^{-x\cdot\zeta}U-1\in C_0^\infty(\R^n),
\qquad \supp r\subset M.
\]

Conjugating \r{4} by $E$ and using $\zeta\cdot\zeta=0$ and the symmetry of $B$, we obtain on $\R^n$
\begin{equation}\label{11}
P_\zeta r=-B^{ij}\zeta_i\zeta_j
-B^{ij}\big(\partial_i\partial_jr+2\zeta_i\partial_jr+\zeta_i\zeta_jr\big).
\end{equation}
Let $X_\tau(r)$ be the norm on the left-hand side of \r{9}, 
where all norms are over $\R^n$. Since $|\zeta|=\sqrt2\tau$, equation \r{11} implies
\begin{equation}   \label{14a}
\begin{split}
\|P_\zeta r\|_{L^2}
&\le C\tau^2\|B\|_{L^2}
+C\|B\|_{L^\infty}\big(\|\nabla^2r\|_{L^2}
+\tau\|\nabla r\|_{L^2}+\tau^2\|r\|_{L^2}\big)\\
&\le C\tau^2\|B\|_{L^2}
+C\langle\tau\rangle^2\|B\|_{L^\infty}X_\tau(r).
\end{split}
\end{equation}
Apply Lemma~\ref{lemma4} in a fixed ball containing $M$ to the solution $r$ of \r{11}. For every $\tau>0$ it follows that
\begin{equation}   \label{14b}
\begin{split}
X_\tau(r)\le C\langle\tau\rangle^{-1}\|P_\zeta r\|_{L^2} \le C\frac{\tau^2}{\langle\tau\rangle}\|B\|_{L^2}
+C\langle\tau\rangle\|B\|_{L^\infty}X_\tau(r).
\end{split}
\end{equation}
Choose $C_0>0$ so that $2C C_0\le1/2$. If $\|B\|_{L^\infty}<C_0$ and $\tau\|B\|_{L^\infty}<C_0$, required by Proposition~\ref{pr1}, then
\[
C\langle\tau\rangle\|B\|_{L^\infty}<2CC_0\le\frac12.
\]
Thus the last term can be absorbed into the left-hand side, yielding
\[
X_\tau(r)\le 2C\frac{\tau^2}{\langle\tau\rangle}\|B\|_{L^2}.
\]
Both $r$ and $B$ are supported in $M$, so their norms over $\R^n$ equal their norms over $M$. This proves \r{9}.
\end{proof}

By Lemma~\ref{lemma1} and the Sobolev product and embedding estimates, choosing $\varepsilon_0$ in Theorem~\ref{thm1} sufficiently small ensures $\|B\|_{L^\infty}<C_0$, which we use for the rest of the argument.

We return to the orthogonal identity \r{7}. Let $u_\zeta=e^{x\cdot\zeta}(1+r_\zeta)$ be the solution provided by Proposition~\ref{pr1}. Choose $\eta$ with $\eta\cdot\eta=0$ so that $\zeta+\eta=-\i\xi$, $\xi\in\R^n$; see \r{eta} below for a specific choice. Taking $u_\e=e^{x\cdot\eta}$, we get 
\begin{equation}   \label{12}
\int_M e^{-\i x\cdot\xi} B^{ij}(x) \eta_i \eta_j  \,\d x= 
-\int_M e^{-\i x\cdot\xi} B^{ij}(x) \eta_i \eta_j  r_\zeta (x)\,\d x.
\end{equation}
Since $\Re\eta=-\Re\zeta$ and both vectors are null, $|\eta|=|\zeta|=\sqrt2\tau$. By the  Cauchy inequality, 
\[
\begin{split}
\left|\int_M e^{-\i x\cdot\xi}B^{ij}(x)\eta_i\eta_j\,\d x\right|
&\le |\eta|^2\|B\|_{L^2(M)}\|r_\zeta\|_{L^2(M)}.
\end{split}
\]
Using \r{9} and $\tau^2/\langle\tau\rangle\le\tau$, we obtain, under the conditions on $\xi$, $\eta$ above, 
\begin{equation}   \label{12a}
|\widehat B^{ij}(\xi)\eta_i \eta_j  |\le C|\eta|^3 \|B\|_{L^2(M)}^2 
\qquad \text{for} \quad 0<|\eta|<\frac{\sqrt2\,C_0}{\|B\|_{L^\infty}}.
\end{equation}

\subsection{Estimating the full  tensor}
The divergence-free condition \r{3} implies $\widehat B(\xi)\xi=0$. Our first goal is to combine this with polarization identity techniques to estimate all components of $\widehat B$ in \r{12a}. 

Write $\xi=\rho\omega$, $|\omega|=1$. Suppose $\rho>0$ and take $\tau=\rho$. Following a now standard construction \cite{SylvesterU87}, choose orthonormal $a,b\in\omega^\perp$, which is possible since $n\ge3$, and set
\be{eta}
\begin{aligned}
\eta_\pm&=\rho\bigg(-a+\i\bigg(-\frac{\omega}{2}
\pm\frac{\sqrt3}{2}b\bigg)\bigg),\quad 
\zeta_\pm&=\rho\bigg(a+\i\bigg(-\frac{\omega}{2}
\mp\frac{\sqrt3}{2}b\bigg)\bigg).
\end{aligned}
\ee
These vectors satisfy $\zeta_\pm+\eta_\pm=-\i\xi$ and $\eta_\pm\cdot\eta_\pm=\zeta_\pm\cdot\zeta_\pm=0$, with the parameter $\tau=\rho$.  Using the  divergence-free condition  $\widehat B \omega=0$, we get 
\[
\widehat B(\eta_\pm/\rho,\eta_\pm/\rho)
=\widehat B(a,a)-\frac34\widehat B(b,b)\mp\i\sqrt3\,\widehat B(a,b),
\]
where, for fixed $\xi$, we regard $\widehat B$ as the complex symmetric bilinear form $\widehat B(v,w)\colon=\widehat B^{ij}(\xi)v_iw_j$. 
For each fixed $\xi=\rho\omega\ne0$ in the allowed range, $0<|\xi|=\rho< C_0/ \|B\|_{L^\infty}$, 
we solve the following system for the entries of $\widehat B(\xi)$. Since $|\eta_\pm|=\sqrt2\rho$, dividing \r{12a} by $\rho^2$ yields
\[
\left|\widehat B(\eta_\pm/\rho,\eta_\pm/\rho)\right|
\le C\rho\|B\|_{L^2(M)}^2,
\]
uniformly in $\rho$, $\omega$, and the orthonormal pair $a,b$. Taking the sum and difference estimates $\widehat B(a,a)-\frac34\widehat B(b,b)$ and $\widehat B(a,b)$ by the same bound. Exchanging $a,b$ also estimates $\widehat B(b,b)-\frac34\widehat B(a,a)$. The system for the two diagonal entries has a constant coefficient non-singular  matrix. 
Its inverse is therefore bounded independently of $\xi$ and the choice of $a,b$. Taking pairs from an orthonormal basis of $\omega^\perp$, and using $\widehat B(\xi)\omega=0$, bounds all entries of $\widehat B(\xi)$ in an orthonormal basis of $\R^n$. Since the Frobenius tensor norm is invariant under orthogonal changes of basis, we obtain
\be{13}
|\widehat B(\xi)|\le C|\xi|\|B\|_{L^2(M)}^2
\quad\text{when }0<|\xi|<C_0/\|B\|_{L^\infty}
\ee
(but we can also use the component-wise norm here). 
The constant in \r{13} is uniform as $\xi\to0$: the remainder estimate \r{9} is uniform for small $\tau=\rho$, and the system above introduces no further dependence on $\rho$. By continuity, \r{13} extends to $\xi=0$, giving $\widehat B(0)=0$.

If we ignore for a moment the different norms of $B$ appearing above, this estimate is not very useful at its limiting frequency $|\xi|= C_0/\|B\|_{L^\infty}$. It does imply $\widehat B(0)=0$ however and is meaningful at intermediate frequencies. What we do below is to use a frequency  cutoff equal to a fractional power of $C/\|B\|_{L^2}$, and control the higher frequencies by the a priori bound on $\|B\|_{H^s}$.

\subsection{Splitting the frequencies}
Since $B$ is flat at $\bo$, its extension as zero is smooth. For the integer $s$ fixed in Theorem~\ref{thm1}, the Sobolev product estimates and Lemma~\ref{lemma1} therefore give
\[
\|B\|_{H^s(\R^n)}
\le C\|\overline{g}-\e\|_{H^s(M)}
\le C\|h\|_{H^s(M)},
\]
where $C$ is uniform for $\varepsilon_0$ sufficiently small. Thus, by \r{1a}, we may set
\[
m=\|B\|_{L^2},\qquad \text{then}\quad 
m\le\|B\|_{H^s}\le C'\eps_0,
\]
where $C'>0$ depends only on $M$ and $s$. All norms below are over $\R^n$. Assume $m>0$, since otherwise $B=0$ and there is nothing to prove. Since $s-1>n/2$, by Sobolev embedding and interpolation, 
\[
\|B\|_{L^\infty}
\le C\|B\|_{H^{s-1}}
\le C\|B\|_{L^2}^{1/s}\|B\|_{H^s}^{1-1/s}
\le Cm^{1/s}\eps_0^{1-1/s}.
\]
Thus any $R$ satisfying
\[
0<R<c\,m^{-1/s}\varepsilon_0^{-(1-1/s)},
\]
where $c>0$ depends only on $M,s$ and is sufficiently small, satisfies $R\|B\|_{L^\infty}<C_0$. Hence \r{13} holds for $|\xi|\le R$ for such $R$. 
We split the frequencies at $|\xi|=R$ now; for a similar argument, see \cite{Alessandrini88} but the difference here is that estimate \r{13} is quadratic in $B$:
\[
m^2=(2\pi)^{-n}
\bigg(\int_{|\xi|\le R}|\widehat B(\xi)|^2\,\d\xi
+\int_{|\xi|>R}|\widehat B(\xi)|^2\,\d\xi  \bigg).
\]
For the low frequencies, by \r{13},
\[
(2\pi)^{-n}\int_{|\xi|\le R}|\widehat B(\xi)|^2\,\d\xi
 \le Cm^4\int_{|\xi|\le R}|\xi|^2\,\d\xi \le CR^{n+2}m^4.
\]
For the high frequencies, the a priori Sobolev bound implies
\[
\begin{split}
(2\pi)^{-n}\int_{|\xi|>R}|\widehat B(\xi)|^2\,\d\xi
 \le R^{-2s}(2\pi)^{-n}
\int_{\R^n}|\xi|^{2s}|\widehat B(\xi)|^2\,\d\xi \le CR^{-2s}\|B\|_{H^s}^2
\le C\eps_0^2R^{-2s}.
\end{split}
\]
Combining the two estimates, we obtain
\begin{equation}\label{14}
m^2\le CR^{n+2}m^4+C\eps_0^2R^{-2s},
\qquad 0<R<c\,m^{-1/s}\eps_0^{-(1-1/s)},
\end{equation}
where $C$ and $c$ are independent of $R$, $m$, and $\varepsilon_0$.

\subsection{Optimizing the frequency cutoff and completing the proof}
It remains to choose an admissible cutoff $R$ in \r{14} and complete the proof. 
Choose
\[
R=L(\eps_0/m)^{1/s},
\]
where $L\ge\max\{1,(4C)^{1/(2s)}\}$ is fixed, with $C$ the constant in \r{14}. 
The second  term in \r{14} satisfies
\[
C\eps_0^2R^{-2s}=CL^{-2s}m^2\le\frac14m^2.
\]
Since $m\le C'\eps_0$ and $2-(n+2)/s>0$ by the choice of $s$ in Theorem~\ref{thm1}, the first term in \r{14} satisfies, enlarging $C$ if necessary,
\[
\begin{split}
CR^{n+2}m^4 =CL^{n+2}\eps_0^{(n+2)/s}m^{\,2-(n+2)/s}m^2\le CL^{n+2}\eps_0^2m^2.
\end{split}
\]
For this choice of $R$, the upper bound in the second inequality in  \r{14} is equivalent to $L\eps_0<c$.
We first fix $L$ as above and then choose $\varepsilon_0$ sufficiently small that
\[
CL^{n+2}\varepsilon_0^2\le\frac14,
\qquad
L\varepsilon_0<c,
\]
where $C$ is the constant in the low-frequency estimate. The cutoff is then allowed in \r{14}, and both terms in \r{14} are at most $m^2/4$. Hence \r{14} yields  $m^2\le m^2/2$, contradicting $m>0$. Thus $B=0$ and $g=\e$ in the harmonic gauge, completing the proof of Theorem~\ref{thm1}.

\section{Proof of Theorem~\ref{thm2}}
\label{sec3}

\subsection*{Preliminaries}
We fix $M,g_0,s$ as in Theorem~\ref{thm2} and assume that $g$ is a smooth Riemannian metric satisfying \r{15} and sufficiently close to $g_0$ in $H^s(M)$.

For a symmetric matrix field $A$, write $L_Au=\partial_i(A^{ij}\partial_j u)$. The conductivity associated with a metric $g$ is
\begin{equation}\label{16}
A_g=\sqrt{\det g}\,g^{-1},\qquad L_{A_g}=\sqrt{\det g}\,\Delta_g.
\end{equation}
For the background $g_0=e^{2c}\e$, set
\begin{equation}\label{17}
a=e^{(n-2)c/2},\qquad A_0\colon=A_{g_0}=a^2\Id.
\end{equation}

Extend $a$ smoothly to a neighborhood of $M$ where it remains positive, and choose a smooth cutoff $0\le\chi\le1$ compactly supported in that neighborhood with $\chi=1$ near $M$. Replace the extension by $1+\chi(a-1)$ in that neighborhood and by $1$ outside it. We continue to denote the resulting positive extension by $a$, and extend $g_0$ by $a^{4/(n-2)}\e$. Then $a=1$ outside a fixed ball, and
\begin{equation}\label{18}
q=\frac{\Delta a}{a}\in C_0^\infty(\R^n),
\end{equation}
where $\Delta$ is the Euclidean Laplacian.

As in the proof of Lemma~\ref{lemma2}, the DN equality \r{15} implies equality of the induced boundary metrics. The integration by parts used in Section~\ref{sec2b} therefore yields
\begin{equation}\label{19}
\int_M(A_g-A_0)^{ij}\partial_i u\,\partial_j v\d x=0
\end{equation}
for $u,v\in C^\infty(M)$ satisfying $\Delta_g u=0$ and $\Delta_{g_0}v=0$ in $M$.

\subsection{The harmonic embedding}\label{sec3a}
In Section~\ref{sec2a}, the identity map provides global harmonic coordinates for the Euclidean background, and Lemma~\ref{lemma1} constructs the gauge by taking their $g$-harmonic extensions. For a general background, we use a harmonic embedding into a higher-dimensional Euclidean space. The following lemma applies to any (smooth) background metric.

\begin{lemma}\label{lemma5}
Let $g_0$ be a  Riemannian metric on $M$. There exist an open neighborhood $U\subset\R^n$ of $M$, a smooth Riemannian extension of $g_0$ to $U$, an integer $N\ge n$, and real-valued functions $F^1,\ldots,F^N\in C^\infty(U)$ such that
\[
\Delta_{g_0}F^\alpha=0\quad\text{in }U,\qquad \alpha=1,\ldots,N,
\]
and $F=(F^1,\ldots,F^N)\colon U\to\R^N$ is a smooth embedding, that is, an injective immersion which is a homeomorphism onto its image.
\end{lemma}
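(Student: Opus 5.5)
Extend $g_0$ smoothly to a slightly larger compact domain $M_1\supset M$ with smooth boundary, e.g.\ $M_1=\{x:\dist(x,M)\le\delta\}$ for small $\delta>0$. Then build the harmonic functions on $M_1$ by solving Dirichlet problems, and take $U=\Mint_1$, which is an open neighborhood of $M$. The aim is to obtain $F$ as $g_0$-harmonic approximations of the coordinate functions, supplemented by finitely many extra harmonic functions that separate points and tangent vectors. Compactness of $M$ keeps $N$ finite.

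\textbf{Step 1: immersion near a point.} Fix $p\in M$. For the $g_0$-harmonic coordinates, solve locally $\Delta_{g_0}u^j=0$ near $p$ with $du^j(p)=dx^j$; this is standard, e.g.\ $u^j=x^j+w^j$ with $w^j$ small in $C^1$ on a tiny ball. To make these global on $U$, I would use the Runge-type approximation property for the elliptic operator $\Delta_{g_0}$ on $M_1$, which follows from unique continuation: any harmonic function on a small ball $B$, with $M_1\setminus \overline B$ connected to $\partial M_1$, can be approximated in $C^1(\overline{B'})$, $B'\Subset B$, by solutions on all of $M_1$. A simpler alternative avoids Runge entirely. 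Take $F^j$ to be the $g_0$-harmonic extension to $M_1$ of $x^j|_{\partial M_1}$ and note nothing guarantees they are an immersion, so instead proceed as follows. The set of $v\in C^\infty(M_1)$ with $\Delta_{g_0}v=0$ has, at each point $p\in \Mint_1$, differentials $\{dv(p)\}$ spanning $T_p^*$ and values separating points. The former holds because local solutions with prescribed gradient can be approximated by global solutions (Runge). The latter holds because if all global harmonic functions agreed at $p\ne q$, then $\delta_p-\delta_q$ would annihilate them, and the Green's function argument plus unique continuation gives a contradiction.

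\textbf{Step 2: compactness.} For each $p\in M$, choose $n$ global harmonic functions whose differentials at $p$ are independent. They stay independent on a neighborhood $V_p$, and finitely many $V_p$ cover $M$. Combining all these functions yields a map that is an immersion on a neighborhood of $M$. Then $M\times M$ minus a neighborhood $W$ of the diagonal is compact. On $W$, injectivity follows from the immersion property by the local inverse function theorem, using a uniform Lebesgue-number argument. For $(p,q)$ outside $W$, pick a harmonic $v$ with $v(p)\ne v(q)$; this still separates a neighborhood of $(p,q)$. Cover by finitely many such neighborhoods and adjoin these functions. The resulting $F$ is an injective immersion on a compact neighborhood of $M$. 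Shrinking $U$ to its interior near $M$, the map is an embedding, since a continuous injection of a compact set is a homeomorphism onto its image.

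\textbf{Main obstacle.} The main obstacle is the Runge approximation, namely producing global solutions on $U$ with prescribed first-order behavior at a point. I would prove it by Hahn--Banach duality. A functional on $C^1(\overline{B'})$ vanishing on global solutions defines, via the Green's function $G$ of the Dirichlet problem on $M_1$, a potential $w=G^*\mu$. This $w$ vanishes near $\partial M_1$ and solves the adjoint equation outside $\overline{B'}$, so it vanishes on the connected set $M_1\setminus\overline{B'}$ by unique continuation. Integration by parts then shows the functional vanishes on local solutions. Connectedness of the complement holds for small balls when $n\ge2$.
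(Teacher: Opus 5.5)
Your proof is correct, but where the paper cites \cite[Section~2, Lemma~4]{GreeneWu75}, you reprove that lemma. The paper extends $g_0$ to a connected open neighborhood $V$ of $M$. It takes from Greene--Wu finitely many $g_0$-harmonic functions on $V$ that are injective with $\operatorname{rank}DF=n$ on a compact $K$ with $M\subset\operatorname{int}K$. It then only uses that a continuous injection of a compact set is a homeomorphism onto its image, which is also your final step.

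You instead work on the compact smooth domain $M_1$, where global solutions are just Dirichlet solutions. You establish the Runge property (spanning differentials at each point, separation of points) by Hahn--Banach duality, the Dirichlet Green's function of $M_1$, and unique continuation, and then do the compactness bookkeeping by hand. This is essentially an unpacking of the Greene--Wu lemma. It is simpler because only a neighborhood of a compact set is needed, so a concrete boundary-value argument replaces the open-manifold approximation theory. The citation is far shorter and, via the main theorem of \cite{GreeneWu75}, even gives $N=2n+1$; your construction does not control $N$, but the lemma does not need it.

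Two points to tidy.
\begin{itemize}
\item In the duality step, the annihilation hypothesis enters through Green's formula, $\mu(u_f)=\pm\int_{\partial M_1}f\,\partial_\nu w\,\d S$ for the Dirichlet solution $u_f$ with data $f$. So $w$ has vanishing Cauchy data on $\partial M_1$, and unique continuation propagates this through $M_1\setminus\overline{B'}$. Your phrase ``vanishes near $\partial M_1$'' skips this step.
\item The Lebesgue-number and separation argument should be run on $K\times K$, for a compact neighborhood $K\subset\Mint_1$ of $M$ inside the region where your map is already an immersion. As written, covering $M\times M\setminus W$ only gives injectivity on $M$; passing to a neighborhood needs this change, or a tube-lemma/limiting argument.
\end{itemize}
The remark about a ``simpler alternative'' that avoids Runge is a false start and should be deleted.
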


\begin{proof}
Extend $g_0$ smoothly to a connected open neighborhood $V$ of $M$, and choose a compact set $K\subset V$ with $M\subset\operatorname{int}K$. By \cite[Section~2, Lemma~4]{GreeneWu75}, finitely many real-valued $g_0$-harmonic functions on $V$ define a map $F$ which is injective on $K$ and satisfies $\operatorname{rank}DF=n$ there. Since $K$ is compact, $F|_K$ is a homeomorphism onto its image. Restricting to $U=\operatorname{int}K$ proves the lemma.
\end{proof}

Note that one can take $N=2n+1$ by the main result in \cite{GreeneWu75}. The embedding $F|_M$ can also be viewed as a finite-dimensional projection of the Poisson embedding in \cite{LLS-Poisson20}, obtained by selecting finitely many boundary data.

\subsection{The harmonic gauge}\label{sec3b}
The next lemma extends Lemma~\ref{lemma1}. In the Euclidean case, the harmonic extension of the identity is the change of coordinates itself. The $g$-harmonic extension of the embedding $F$ from Lemma~\ref{lemma5} may not take values in $F(U)$, so we first project it onto $F(U)$ before applying $F^{-1}$. After changing coordinates, the remaining discrepancy is normal to $F(U)$; this is the field $Z$ below. For $F=\Id$ and $N=n$, the projection is the identity and $Z=0$.

\begin{lemma}\label{lemma6}
Let $F$ be the harmonic embedding of Lemma~\ref{lemma5} for $g_0$. For $g$ sufficiently close to $g_0$ in $H^s(M)$, there is a diffeomorphism $\Phi\colon M\to M$ fixing $\partial M$ pointwise such that, writing
\begin{equation}\label{20}
\overline{g}=\Phi_*g,\qquad B=a^{-2}(A_{\overline{g}}-A_0),
\end{equation}
there exists $Z\in C^\infty(M;\R^N)$ satisfying
\begin{equation}\label{21}
L_{A_{\overline{g}}}(F+Z)=0,\qquad Z|_{\partial M}=0, \qquad 
\partial_kF\cdot Z=0,\qquad k=1,\ldots,n.
\end{equation}
Moreover,
\begin{equation}\label{22}
\begin{split}
\|\Phi-\Id\|_{H^{s+1}}+\|B\|_{H^s}+\|Z\|_{H^{s+1}}&\le C\|g-g_0\|_{H^s},\\
\|Z\|_{H^1}&\le C\|B\|_{L^2}.
\end{split}
\end{equation}
All norms in this lemma are over $M$, and $C$ depends on $M,g_0,s$ and the fixed embedding $F$.
\end{lemma}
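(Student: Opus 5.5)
The construction follows Lemma~\ref{lemma1}, with the identity map replaced by the embedding $F$. Let $u=(u^1,\dots,u^N)$ be the $g$-harmonic extension of $F|_{\partial M}$, so that $\Delta_g u^\alpha=0$ in $M$ and $u|_{\partial M}=F|_{\partial M}$. Since $\Delta_{g_0}F=0$, the difference $w=u-F$ solves
\[
L_{A_g}w=-L_{A_g-A_0}F,\qquad w|_{\partial M}=0.
\]
The Dirichlet estimate for the uniformly elliptic operator $L_{A_g}$, which is a small perturbation of $L_{A_0}$, together with the algebra property of $H^s$, gives
\[
\|w\|_{H^{s+1}}\le C\|g-g_0\|_{H^s}.
\]
By Sobolev embedding, $u$ is then $C^1$-close to $F$.

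Next we project onto $F(U)$. Since $F(U)$ is an embedded submanifold, it has a tubular neighborhood. There the nearest-point projection $\pi$ is smooth, and every point $p$ near $F(M)$ can be written uniquely as $p=\pi(p)+\nu$ with $\nu\perp T_{\pi(p)}F(U)$. Define
\[
\Psi=F^{-1}\circ\pi\circ u\colon M\to U .
\]
Then $\Psi|_{\partial M}=\Id$, and $\|\Psi-\Id\|_{H^{s+1}}\le C\|w\|_{H^{s+1}}$ by the composition estimates of \cite{IKT13}. The argument of Lemma~\ref{lemma1} now shows that $\Psi$ is a diffeomorphism of $M$ onto itself fixing $\partial M$: use the distance-to-boundary bound $|\Psi(x)-x|\le\frac12 d(x)$ together with the contraction argument, noting that $\|D(\Psi-\Id)\|_{L^\infty}$ is small. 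For this step it is convenient that $F$ is defined on the neighborhood $U$ of $M$, so that $\pi\circ u$ stays inside $F(U)$.

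Set $\Phi=\Psi$, $\overline g=\Phi_*g$, and $Z=u\circ\Phi^{-1}-F$. By construction $u\circ\Phi^{-1}=F+Z$ with $Z(y)$ normal to $T_{F(y)}F(U)$, which is exactly $\partial_kF\cdot Z=0$. The boundary condition $Z|_{\partial M}=0$ holds because $u=F$ there. Naturality of the Laplacian gives $\Delta_{\overline g}(u\circ\Phi^{-1})=0$, i.e.\ $L_{A_{\overline g}}(F+Z)=0$, which proves \eqref{21}. The first line of \eqref{22} then follows from the estimates on $w$ and $\Psi$ and from the Sobolev estimates of \cite{IKT13} for $\Phi_*g-g_0$; the bound on $B$ uses that $a$ is smooth and positive.

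The main obstacle is the second estimate in \eqref{22}, $\|Z\|_{H^1}\le C\|B\|_{L^2}$, which is linear in $B$ and uses $L^2$ on the right only. Since $L_{A_0}F=0$ (because $\Delta_{g_0}F=0$), the equation for $Z$ reads
\[
L_{A_{\overline g}}Z=-L_{A_{\overline g}-A_0}F=-\partial_i\big(a^2B^{ij}\partial_jF\big),\qquad Z|_{\partial M}=0.
\]
The right-hand side is in divergence form with coefficient in $L^2$. The $H^1$ energy estimate for the uniformly elliptic divergence-form operator $L_{A_{\overline g}}$, with $H^{-1}$ data, then gives
\[
\|Z\|_{H^1}\le C\|a^2B\,\partial F\|_{L^2}\le C\|B\|_{L^2}.
\]
The constant is uniform since $A_{\overline g}$ is close to $A_0$ in $L^\infty$, and the orthogonality condition is not needed here. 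The one point to check is that the ellipticity constants of $L_{A_{\overline g}}$ stay uniform, which follows from $\|B\|_{L^\infty}\le C\|B\|_{H^s}$ being small.
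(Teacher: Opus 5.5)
Your proposal is correct and follows the paper's proof essentially step by step. Both arguments take the $g$-harmonic extension of $F|_{\partial M}$ with a perturbative $H^{s+1}$ estimate, set $\Phi=F^{-1}\circ\pi\circ F_g$, reuse the contraction argument of Lemma~\ref{lemma1}, put $Z=F_g\circ\Phi^{-1}-F$ with normality coming from the nearest-point projection, and prove $\|Z\|_{H^1}\le C\|B\|_{L^2}$ by an energy estimate for $L_{A_{\overline g}}Z=-\diver(a^2B\nabla F)$. The paper additionally writes out the bound for $\Phi^{-1}-\Id$ and the conductivity transformation law behind the $H^s$ estimates for $B$ and $Z$, which you only cite.
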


\begin{proof}
For $\alpha=1,\ldots,N$, let $F_g^\alpha$ solve
\[
\Delta_gF_g^\alpha=0\quad\text{in }M,\qquad F_g^\alpha|_{\partial M}=F^\alpha|_{\partial M}.
\]
Elliptic regularity up to the boundary implies that $F_g\in C^\infty(M;\R^N)$. Since $L_{A_0}F=0$, the difference $F_g-F$ satisfies
\[
L_{A_0}(F_g-F)=-\diver\bigl((A_g-A_0)\nabla F_g\bigr),\qquad (F_g-F)|_{\partial M}=0.
\]
Standard elliptic Dirichlet estimates and the algebra property of $H^s(M)$ imply
\[
\begin{split}
\|F_g-F\|_{H^{s+1}}&\le C\|(A_g-A_0)\nabla F_g\|_{H^s}\\
&\le C\|A_g-A_0\|_{H^s}\bigl(1+\|F_g-F\|_{H^{s+1}}\bigr),
\end{split}
\]
where we used that $F$ is fixed and smooth. For $g$ sufficiently close to $g_0$, we can absorb the last term and obtain
\begin{equation}\label{23}
\|F_g-F\|_{H^{s+1}}\le C\|A_g-A_0\|_{H^s}\le C\|g-g_0\|_{H^s}.
\end{equation}
By Sobolev embedding, 
\[
\|F_g-F\|_{C^0(M)}\le C\|g-g_0\|_{H^s}.
\]

Choose a neighborhood $\mathcal T$ of $F(M)$ in $\R^N$ on which the Euclidean nearest-point projection $\pi\colon\mathcal T\to F(U)$ is smooth. Compactness of $F(M)$ and the preceding estimate ensure that $F_g(M)\subset\mathcal T$ for $g$ sufficiently close to $g_0$. We can therefore define
\begin{equation}\label{24}
\Phi=F^{-1}\circ\pi\circ F_g\colon M\to U.
\end{equation}
Since $F_g=F$ on $\partial M$, the map $\Phi$ fixes the boundary pointwise. The smooth composition estimate in \cite[Proposition~2.20]{IKT13}, together with $(F^{-1}\circ\pi)\circ F=\Id$, yields
\begin{equation}\label{25}
\begin{split}
\|\Phi-\Id\|_{H^{s+1}}&=\|(F^{-1}\circ\pi)\circ F_g-(F^{-1}\circ\pi)\circ F\|_{H^{s+1}}\\
&\le C\|F_g-F\|_{H^{s+1}}.
\end{split}
\end{equation}
To apply that estimate, one may extend $F^{-1}\circ\pi$ smoothly to $\R^N$ using a cutoff equal to one on a smaller neighborhood of $F(M)$ containing $F_g(M)$.

By \r{23}, \r{25}, and Sobolev embedding,
\[
\|\Phi-\Id\|_{C^1(M)}\le C\|g-g_0\|_{H^s(M)}.
\]
For $g$ sufficiently close to $g_0$, this bound is less than $1/2$. Since $\Phi$ fixes $\partial M$ pointwise, the argument of Lemma~\ref{lemma1} proves that $\Phi$ is a diffeomorphism of $M$ onto itself, smooth with smooth inverse up to the boundary.

To apply the estimates of \cite{IKT13} on $\R^n$, extend $\Phi-\Id$ by a fixed bounded Sobolev extension operator and a cutoff. Its small $H^{s+1}$ norm ensures that adding $\Id$ defines a global diffeomorphism restricting to $\Phi$ on $M$. The composition and inversion estimates, together with $\Phi^{-1}-\Id=-(\Phi-\Id)\circ\Phi^{-1}$, imply
\[
\|\Phi^{-1}-\Id\|_{H^{s+1}}\le C\|\Phi-\Id\|_{H^{s+1}}\le C\|g-g_0\|_{H^s}.
\]
To estimate $B$, use the conductivity transformation law
\[
A_{\overline{g}}=\left(\frac{D\Phi\,A_g\,D\Phi^{\top}}{\det D\Phi}\right)\circ\Phi^{-1}.
\]
By \r{20}, this implies
\[
a^2B=\left(\frac{D\Phi\,A_g\,D\Phi^{\top}}{\det D\Phi}-A_0\right)\circ\Phi^{-1}+A_0\circ\Phi^{-1}-A_0.
\]
Since $\det D\Phi\ge2^{-n}$ and $a$ is fixed, smooth, and positive, the Sobolev product and composition estimates in \cite[Lemmas~2.7 and~2.11, Proposition~2.20]{IKT13} yield
\[
\begin{split}
\|B\|_{H^s}&\le C\bigl(\|A_g-A_0\|_{H^s}+\|D\Phi-\Id\|_{H^s}+\|\Phi^{-1}-\Id\|_{H^s}\bigr)\\
&\le C\bigl(\|g-g_0\|_{H^s}+\|\Phi-\Id\|_{H^{s+1}}\bigr)\\
&\le C\|g-g_0\|_{H^s}.
\end{split}
\]

Now set
\begin{equation}\label{26}
Z=F_g\circ\Phi^{-1}-F.
\end{equation}
Invariance of the Laplace--Beltrami operator under diffeomorphisms implies $L_{A_{\overline{g}}}(F+Z)=0$. Also, $Z=0$ on $\partial M$ because $F_g=F$ there and $\Phi$ fixes the boundary. By \r{24},
\[
\pi(F_g(x))=F(\Phi(x)).
\]
The defining property of the nearest-point projection therefore implies
\[
F_g(x)-F(\Phi(x))\perp T_{F(\Phi(x))}F(U).
\]
Setting $y=\Phi(x)$ and using \r{26}, we obtain
\[
Z(y)\perp T_{F(y)}F(U)=\operatorname{span}\{\partial_1F(y),\ldots,\partial_nF(y)\},
\]
which proves the normality condition in \r{21}. Moreover,
\[
Z=(F_g-F)\circ\Phi^{-1}+F\circ\Phi^{-1}-F.
\]
The Sobolev composition estimates, \r{23}, and the bound for $\Phi^{-1}-\Id$ imply
\[
\|Z\|_{H^{s+1}}\le C\bigl(\|F_g-F\|_{H^{s+1}}+\|\Phi^{-1}-\Id\|_{H^{s+1}}\bigr)\le C\|g-g_0\|_{H^s}.
\]

Finally, $L_{A_0}F=0$ and \r{21} imply
\begin{equation}\label{27}
L_{A_{\overline{g}}}Z=-\diver(a^2B\nabla F),\qquad Z|_{\partial M}=0.
\end{equation}
Multiply by $Z$ and integrate by parts to get 
\[
\|\nabla Z\|_{L^2}^2\le C\left|\int_M a^2B^{ij}\partial_jF\cdot\partial_iZ\d x\right|\le C\|B\|_{L^2}\|\nabla Z\|_{L^2}.
\]
The Poincar\'e inequality proves the $H^1$ bound in \r{22}.
\end{proof}

\subsection{Boundary jets}\label{sec3c}

\begin{lemma}\label{lemma7}
The tensor $\overline{g}-g_0$ is flat at $\partial M$.
\end{lemma}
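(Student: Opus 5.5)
We will mirror the proof of Lemma~\ref{lemma2}, replacing the Euclidean coordinate functions by the components of the harmonic embedding $F$ of Lemma~\ref{lemma5}.

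\emph{Step 1: boundary determination.} By \cite[Propositions~1.1--1.3]{LeeU}, whose proof needs no analyticity, the DN equality \r{15} recovers the full boundary jet of the metric in boundary normal coordinates. The principal symbol first gives $g|_{T\bo}=g_0|_{T\bo}$, which lets us strip the surface measure factor exactly as in Lemma~\ref{lemma2}. Hence there is a diffeomorphism $\psi$ of a boundary collar, fixing $\bo$ pointwise, such that $\widehat g=\psi^*g$ and $g_0$ have the same jets at $\bo$.

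\emph{Step 2: the harmonic embedding has flat discrepancy.} Set $Y=F_g\circ\psi$ in the collar. Then $\Delta_{\widehat g}Y^\alpha=0$ and $Y^\alpha|_\bo=F^\alpha|_\bo$. The DN equality and the coincidence of the unit normals of $\widehat g$ and $g_0$ on $\bo$ give
\[
\partial_{\nu}Y^\alpha=\Lambda_g(F^\alpha|_\bo)=\Lambda_{g_0}(F^\alpha|_\bo)=\partial_\nu F^\alpha .
\]
So $W=Y-F$ solves
\[
\Delta_{\widehat g}W=(\Delta_{g_0}-\Delta_{\widehat g})F,\qquad W|_\bo=\partial_\nu W|_\bo=0 .
\]
The right-hand side is flat at $\bo$. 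The non-characteristic induction of Lemma~\ref{lemma2} (tangential derivatives vanish, then each normal derivative is solved from the equation) shows that $W$ is flat at $\bo$.

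\emph{Step 3: transfer to $\Phi$ and $\overline g$.} Recall $\Phi=F^{-1}\circ\pi\circ F_g$. Then
\[
\Phi\circ\psi=F^{-1}\circ\pi\circ(F+W),
\]
and since $\pi\circ F=F$ on $F(U)$ with $F^{-1}\circ\pi$ smooth, Taylor expansion shows that $\Phi\circ\psi-\Id$ is flat at $\bo$. Near $\bo$ we have
\[
\overline g=\Phi_*g=(\Phi\circ\psi)_*\widehat g .
\]
This is a pushforward of a metric with the same jet as $g_0$ by a map with the same jet as the identity, so $\overline g-g_0$ is flat. As a byproduct, $Z=F_g\circ\Phi^{-1}-F$ is flat as well, which will be needed for extensions later.

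\emph{Main obstacle.} The delicate point is Step 1. We must check that \cite{LeeU} gives the jet in boundary normal coordinates for our normalization of $\Lambda_g$, as already argued in Lemma~\ref{lemma2}. We must also check that the nonlinear composition in Step 3 preserves flatness; this follows because all maps are smooth up to the boundary and flatness is stable under composition with smooth maps.
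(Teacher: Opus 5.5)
Your proposal is correct and follows the paper's proof essentially step for step. It uses boundary determination via \cite{LeeU} to get $\psi$, flatness of $W=F_g\circ\psi-F$ by the noncharacteristic induction, and then the identity $\Phi\circ\psi=F^{-1}\circ\pi\circ(F+W)$ together with $\overline g=(\Phi\circ\psi)_*\widehat g$; your byproduct remark that $Z$ is flat is true but not needed later, since $Z$ is only used inside $M$.
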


\begin{proof}
As in Lemma~\ref{lemma2}, the boundary determination in \cite[Proposition~1.3]{LeeU} provides a local diffeomorphism $\psi$ in a collar fixing $\partial M$ pointwise, such that $\widehat g=\psi^*g$ and $g_0$ have equal boundary jets. Set $W=F_g\circ\psi-F$. Then
\begin{equation}\label{28}
\begin{gathered}
\Delta_{\widehat g}W^\alpha=(\Delta_{g_0}-\Delta_{\widehat g})F^\alpha,\\
W^\alpha|_{\partial M}=\partial_{\nu_{\widehat g}}W^\alpha|_{\partial M}=0,\qquad \alpha=1,\ldots,N.
\end{gathered}
\end{equation}
The Neumann data in \r{28} follow from DN equality \r{15} and $\widehat g=g_0$ on $\partial M$. The right side is flat; ellipticity makes the boundary noncharacteristic, so the induction in Lemma~\ref{lemma2} shows that $W$ is flat.

The new step in the proof  is to use the projection in \r{24}:
\[
\Phi\circ\psi=F^{-1}\circ\pi\circ(F+W).
\]
Since $F^{-1}\circ\pi$ is smooth near $F(M)$ and $F^{-1}\circ\pi\circ F=\Id$, flatness of $W$ at $\partial M$ implies that $\Phi\circ\psi-\Id$ is flat there as well. Since $(\Phi\circ\psi)_*\widehat g=\overline{g}$, the claim follows.
\end{proof}

Lemma~\ref{lemma7} allows us to extend $\overline{g}$ by $g_0$ outside $M$ and $B$ by zero, so that $B\in C_0^\infty(\R^n)$. By \r{22},
\begin{equation}\label{29}
\|B\|_{H^s(\R^n)}\le C\|g-g_0\|_{H^s(M)}.
\end{equation}

The following estimate replaces the divergence-free identity \r{3}.

\begin{lemma}\label{lemma8}
For $B$ as above,
\begin{equation}\label{30}
\|\diver B\|_{L^2(\R^n)}\le C\|B\|_{L^2(\R^n)}.
\end{equation}
\end{lemma}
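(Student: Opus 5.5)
The plan is to extract $\diver B$ from the gauge equation in \r{21} by testing it against the tangent vectors $\partial_kF$. The normality condition $\partial_kF\cdot Z=0$ should turn the a priori second-order term in $Z$ into a first-order one, which \r{22} then controls in $L^2$ by $\|B\|_{L^2}$. Since $B$ vanishes outside $M$ (Lemma~\ref{lemma7}), it suffices to work on $M$.

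First I expand the gauge equation. By \r{20}, $A_{\overline g}=a^2(\Id+B)$, and $L_{A_0}F=0$. So \r{21} reads, componentwise in $\alpha=1,\dots,N$,
\[
\partial_i(a^2B^{ij})\,\partial_jF^\alpha+a^2B^{ij}\partial_i\partial_jF^\alpha+\bigl(L_{A_{\overline g}}Z\bigr)^\alpha=0 .
\]
Set $v^j=\partial_i(a^2B^{ij})$. Taking the Euclidean inner product in $\R^N$ with $\partial_kF$ gives
\[
G_{kj}v^j=-a^2B^{ij}\,\partial_i\partial_jF\cdot\partial_kF-\partial_kF\cdot L_{A_{\overline g}}Z,\qquad G_{kj}=\partial_kF\cdot\partial_jF .
\]
Since $F$ is an immersion on $U\supset M$ (Lemma~\ref{lemma5}), the Gram matrix $G$ is smooth and uniformly invertible on $M$. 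Hence $|v|\le C\bigl(|B|+|\partial_kF\cdot L_{A_{\overline g}}Z|\bigr)$ pointwise.

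The key step, and the apparent obstacle, is the term $\partial_kF\cdot L_{A_{\overline g}}Z$. It involves second derivatives of $Z$, while \r{22} only gives $\|Z\|_{H^1}\le C\|B\|_{L^2}$. I would commute $\partial_kF$ inside the divergence:
\[
\partial_kF\cdot\partial_i\bigl(A^{ij}_{\overline g}\partial_jZ\bigr)=\partial_i\bigl(A^{ij}_{\overline g}\,\partial_kF\cdot\partial_jZ\bigr)-A^{ij}_{\overline g}\,\partial_i\partial_kF\cdot\partial_jZ .
\]
Differentiating the normality condition in \r{21} gives $\partial_kF\cdot\partial_jZ=-\partial_j\partial_kF\cdot Z$. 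The first term therefore equals $-\partial_i\bigl(A^{ij}_{\overline g}\,\partial_j\partial_kF\cdot Z\bigr)$, which involves only $Z$, $\nabla Z$, $\nabla A_{\overline g}$ and up to third derivatives of the fixed smooth $F$. Because $s-1>n/2$, Sobolev embedding and \r{22} give $\|A_{\overline g}\|_{C^1(M)}\le C$ uniformly for $g$ close to $g_0$. Consequently
\[
\|\partial_kF\cdot L_{A_{\overline g}}Z\|_{L^2(M)}\le C\|Z\|_{H^1(M)}\le C\|B\|_{L^2(M)} .
\]

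Finally, $v\in L^2(M)$ with $\|v\|_{L^2}\le C\|B\|_{L^2}$. Writing $\diver B=a^{-2}\bigl(v-B\nabla a^2\bigr)$, with $a$ smooth and positive, and using that $\diver B=0$ outside $M$ (as $B\in C_0^\infty(\R^n)$ with support in $M$), gives \r{30}. The constant depends only on $M$, $g_0$, $F$, $s$, and on the smallness of $g-g_0$ in $H^s$.
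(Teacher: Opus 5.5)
Your proposal is correct and is essentially the paper's argument. Both test the $Z$-equation against $\partial_kF$, use the differentiated normality condition $\partial_kF\cdot\partial_jZ=-\partial_j\partial_kF\cdot Z$ to remove the second derivatives of $Z$, invert a Gram-type matrix, and close with $\|Z\|_{H^1}\le C\|B\|_{L^2}$. The paper reaches the same identity by applying $L_{A_{\overline g}}$ to $\partial_kF\cdot Z=0$.

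The only difference is how the term $\partial_i(a^2B^{ij})\,Z\cdot\partial_j\partial_kF$ is handled. The paper moves it to the left and inverts the perturbed matrix $\partial_kF\cdot\partial_jF-Z\cdot\partial_j\partial_kF$, which requires $\|Z\|_{L^\infty}$ to be small. You keep it on the right and bound it using $\|\nabla B\|_{L^\infty}\le C\|B\|_{H^s}$, which is equally valid because $s>n/2+1$.
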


\begin{proof}
We estimate $\diver(a^2B)$ by taking the tangential components of the equation for $Z$; the normality condition eliminates the second derivatives of $Z$ from those components.

We have in $M$,
\begin{equation}\label{31}
L_{A_{\overline{g}}}Z=-L_{A_{\overline{g}}}F=-\partial_i(a^2B^{ij}\partial_jF)  =-\partial_i(a^2B^{ij})\partial_jF-a^2B^{ij}\partial_i\partial_jF,
\end{equation}
where in the first equality we used Lemma~\ref{lemma6}, and in the second the definition of $B$ and the fact that $F$ is $g_0$-harmonic. We also have
\begin{equation}\label{32}
L_{A_{\overline{g}}}\partial_kF=\bigl(\partial_iA_0^{ij}+\partial_i(a^2B^{ij})\bigr)\partial_j\partial_kF+A_{\overline{g}}^{ij}\partial_i\partial_j\partial_kF.
\end{equation}
Applying $L_{A_{\overline{g}}}$ to $\partial_kF\cdot Z=0$, we obtain
\begin{equation}\label{33}
0=\partial_kF\cdot L_{A_{\overline{g}}}Z+Z\cdot L_{A_{\overline{g}}}\partial_kF+2A_{\overline{g}}^{ij}\partial_i\partial_kF\cdot\partial_jZ.
\end{equation}
Using \r{31}, \r{32}, and \r{33}, and reordering terms, we find that
\[
\begin{split}
\bigl(\partial_kF\cdot\partial_jF-Z\cdot\partial_j\partial_kF\bigr)\partial_i(a^2B^{ij})
&=-a^2B^{ij}\partial_kF\cdot\partial_i\partial_jF\\
&\quad+Z\cdot\bigl(\partial_iA_0^{ij}\partial_j\partial_kF+A_{\overline{g}}^{ij}\partial_i\partial_j\partial_kF\bigr)\\
&\quad+2A_{\overline{g}}^{ij}\partial_i\partial_kF\cdot\partial_jZ.
\end{split}
\]
By Lemma~\ref{lemma5}, the matrix $\partial_kF\cdot\partial_jF$ is uniformly positive definite on $M$. Furthermore, by Lemma~\ref{lemma6} and the embedding $H^{s+1}(M)\hookrightarrow C^0(M)$, we have
\[
\|Z\|_{L^\infty(M)}\le C\|g-g_0\|_{H^s(M)}.
\]
Thus, taking $g$ close enough to $g_0$, we see that $\partial_kF\cdot\partial_jF-Z\cdot\partial_j\partial_kF$ is still uniformly positive definite and therefore uniformly invertible. Since $A_{\overline{g}}$ is uniformly bounded and $a,F$ are fixed and smooth, it follows that
\[
\|\diver(a^2B)\|_{L^2(\R^n)}\le C\bigl(\|B\|_{L^2(\R^n)}+\|Z\|_{H^1(M)}\bigr)\le C\|B\|_{L^2(\R^n)},
\]
where the second inequality follows from Lemma~\ref{lemma6}. Finally,
$\diver B=a^{-2}\diver(a^2B)-2B\nabla\log a $, which proves \r{30}.
\end{proof}


\subsection{The Liouville transform}\label{sec3d}
By \r{20}, the extended conductivity is $A_{\overline{g}}=a^2(\Id+B)$ on $\R^n$, and it equals $A_0$ outside $M$. Set
\begin{equation}\label{34}
b_2=\|B\|_{L^2},\qquad b_s=\|B\|_{H^s},\qquad b_\infty=\|B\|_{L^\infty}.
\end{equation}
All norms below are over $\R^n$ unless another domain is indicated. Since $s-1>n/2$, Sobolev embedding implies
\begin{equation}\label{35}
b_\infty+\|\diver B\|_{L^\infty}\le Cb_s.
\end{equation}
Furthermore, the interpolation used in Section~\ref{sec2} yields
\begin{equation}\label{36}
b_\infty\le C\|B\|_{H^{s-1}}\le Cb_2^{1/s}b_s^{1-1/s}.
\end{equation}
The three quantities in \r{34} have different roles: $b_2$ measures the perturbation in its ``natural'' norm, $b_s$ controls its regularity and high-frequency behavior, and $b_\infty$ will be used to determine the admissible frequency range. As in Proposition~\ref{pr1}, the continuation argument below will require a condition of the form $\tau b_\infty\le c_0$; \r{36} will allow us to choose the frequency cutoff within that range.

The substitution $u=v/a$ removes the background conformal factor from the principal part of the equation $L_{A_{\overline{g}}}u=0$. The background creates  the  potential $q$ in \r{18}, while the additional coefficients are controlled by $B$.

\begin{lemma}\label{lemma9}
Set
\begin{equation}\label{37}
q_B=a^{-1}\diver(B\nabla a)=\nabla\log a\cdot\diver B+a^{-1}B^{ij}\partial_i\partial_j a.
\end{equation}
Then, for $v\in C^\infty(\R^n)$,
\begin{equation}\label{38}
a^{-1}L_{A_{\overline{g}}}(v/a)=(\Delta-q)v+\diver(B\nabla v)-q_Bv.
\end{equation}
Furthermore,
\begin{equation}\label{39}
\|\diver B\|_{L^2}+\|q_B\|_{L^2} \le Cb_2,\quad \|\diver B\|_{L^\infty}+\|q_B\|_{L^\infty} \le Cb_s.
\end{equation}
The constant $C$ depends only on the fixed data, including the extension of $a$ and the harmonic embedding $F$.
\end{lemma}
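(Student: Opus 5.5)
The identity \r{38} is a direct computation, so I will verify it by expanding both sides. On $\R^n$ we have $A_{\overline{g}}=a^2(\Id+B)$, so I split $L_{A_{\overline{g}}}=L_{a^2\Id}+L_{a^2B}$ and treat the two pieces separately.

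For the isotropic part, I use the standard Liouville computation. With $u=v/a$ one has $a^2\nabla u=a\nabla v-v\nabla a$, hence
\[
\diver(a^2\nabla u)=\nabla a\cdot\nabla v+a\Delta v-\nabla v\cdot\nabla a-v\Delta a=a\Delta v-v\Delta a.
\]
Dividing by $a$ gives $\Delta v-qv$, with $q$ as in \r{18}.

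For the anisotropic part, similarly $a^2B\nabla u=aB\nabla v-vB\nabla a$. Then
\[
\diver(aB\nabla v)=a\diver(B\nabla v)+\nabla a\cdot B\nabla v,
\qquad
\diver(vB\nabla a)=v\diver(B\nabla a)+\nabla v\cdot B\nabla a.
\]
By the symmetry of $B$, the two first-order cross terms cancel. Dividing by $a$ leaves $\diver(B\nabla v)-a^{-1}\diver(B\nabla a)\,v=\diver(B\nabla v)-q_Bv$. Adding the two pieces proves \r{38}.

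The second expression for $q_B$ in \r{37} follows by expanding $\diver(B\nabla a)=(\partial_iB^{ij})\partial_ja+B^{ij}\partial_i\partial_ja$ and using $a^{-1}\partial_ja=\partial_j\log a$.

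For the estimates \r{39}, the bound $\|\diver B\|_{L^2}\le Cb_2$ is exactly Lemma~\ref{lemma8}. The bound $\|\diver B\|_{L^\infty}\le Cb_s$ is \r{35}. For $q_B$, I use the expanded form in \r{37}. The functions $\nabla\log a$ and $a^{-1}\nabla^2a$ are smooth and bounded on $\R^n$, because $a$ is positive everywhere and equals $1$ outside a fixed ball. Hence $|q_B|\le C(|\diver B|+|B|)$ pointwise, and taking $L^2$ and $L^\infty$ norms gives the remaining bounds from Lemma~\ref{lemma8}, \r{35}, and $b_\infty\le Cb_s$.

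One point needs care: Lemma~\ref{lemma8} is stated on $\R^n$ for the zero extension of $B$. Since $B$ is flat at $\bo$, this extension is smooth, so $\diver B$ is computed classically and no boundary terms arise. I do not expect any real obstacle here; the only thing to keep track of is that the constants depend on the fixed extension of $a$ and on $F$ through Lemma~\ref{lemma8}.
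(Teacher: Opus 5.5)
Your proposal is correct and follows the paper's proof essentially step for step: it uses the same split $A_{\overline{g}}=a^2\Id+a^2B$, the same Liouville computation for the isotropic part, and the same cancellation of the first-order cross terms by symmetry of $B$. The bounds in \r{39} then come, as in the paper, from the pointwise estimate $|q_B|\le C(|\diver B|+|B|)$ combined with Lemma~\ref{lemma8} and \r{35}.
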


\begin{proof}
Identity \r{37} follows by expanding the divergence. Since $A_0=a^2\Id$,
\begin{equation}\label{40}
\frac1a\diver\left(a^2\nabla\left(\frac va\right)\right)=\Delta v-\frac{\Delta a}{a}v.
\end{equation}
Using that $B$ is symmetric, we also have
\begin{equation}\label{41}
\frac1a\diver\left(a^2B\nabla\left(\frac va\right)\right)=\diver(B\nabla v)-\frac va\diver(B\nabla a).
\end{equation}
Adding \r{40} and \r{41} proves \r{38}. The $L^2$ bounds in \r{39} follow from
\[
\|q_B\|_{L^2}\le C\bigl(\|\diver B\|_{L^2}+\|B\|_{L^2}\bigr)
\]
and Lemma~\ref{lemma8}. The same expansion of $q_B$, together with \r{35}, proves the $L^\infty$ bounds.
\end{proof}

Identity \r{38} replaces \r{4} from Section~\ref{sec2}: when $a=1$, we have $q=q_B=0$, and the Euclidean harmonic gauge makes $\diver B=0$. Here the bounds in \r{39} control the additional first-order and zeroth-order terms without losing a derivative in $L^2$.

\subsection{Background C.G.O.\ solutions}\label{sec3e}
We first construct exact $g_0$-harmonic solutions to replace the Euclidean exponentials \r{1}. We keep $q$  fixed. 
The next lemma is essentially the construction in \cite{SylvesterU87} written in the form we need it here. The remainder estimates are also proved in \cite[Propositions~3.1 and~3.3]{BalUhlmann10}. 

\begin{lemma}\label{lemma10}
Fix a closed ball $\B$ containing $M$ in its interior and an integer $\ell\ge0$. There are constants $T_*,C>0$ such that, if $\zeta\in\C^n$, $\zeta\cdot\zeta=0$, and $|\zeta|\ge T_*$, then there is a smooth global $g_0$-harmonic function
\begin{equation}\label{42}
u_\zeta^0=a^{-1}e^{x\cdot\zeta}(1+r_\zeta^0)
\end{equation}
satisfying
\begin{equation}\label{43}
\|r_\zeta^0\|_{C^\ell(\B)}\le\frac{C}{|\zeta|}.
\end{equation}
The constants depend on the fixed extended background, $\B$, and $\ell$, and are uniform in the direction of $\zeta$.
\end{lemma}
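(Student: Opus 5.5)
The plan is to reduce to a Schrödinger equation and run the Sylvester--Uhlmann construction with the Fourier-multiplier estimates of Sylvester and Uhlmann (or Hähner). By \r{40}, with $A_0=a^2\Id$ on $\R^n$ (recall $g_0$ was extended by $a^{4/(n-2)}\e$ and $a=1$ outside a fixed ball), a function $u=v/a$ is $g_0$-harmonic on $\R^n$ if and only if $(\Delta-q)v=0$ there, with $q=\Delta a/a\in C_0^\infty(\R^n)$ as in \r{18}. So I will look for $v=e^{x\cdot\zeta}(1+r^0_\zeta)$. Since $\zeta\cdot\zeta=0$, this is equivalent to
\[
(\Delta+2\zeta\cdot\nabla)r^0_\zeta=q(1+r^0_\zeta),
\]
that is, $P_\zeta r^0_\zeta-qr^0_\zeta=q$ on $\R^n$.

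The first step is to solve this globally. I will use the Faddeev-type inverse $G_\zeta$ of $P_\zeta$ on weighted spaces $L^2_\delta$, $-1<\delta<0$. It satisfies
\[
\|G_\zeta f\|_{H^k_\delta}\le C|\zeta|^{k-1}\|f\|_{L^2_{\delta+1}}, \qquad k=0,1,2,
\]
uniformly in the direction of $\zeta$, for $|\zeta|\ge1$. Multiplication by the compactly supported $q$ maps $L^2_\delta$ to $L^2_{\delta+1}$ boundedly. Hence $r\mapsto G_\zeta(q(1+r))$ is a contraction on $L^2_\delta$ once $|\zeta|\ge T_*$, where $T_*$ depends only on $q$. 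The fixed point satisfies $\|r^0_\zeta\|_{L^2_\delta}\le C|\zeta|^{-1}$. Elliptic regularity for $(\Delta-q)v=0$ makes $v$, and therefore $u^0_\zeta=a^{-1}v$, smooth and globally $g_0$-harmonic. This gives \r{42}.

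The second step is to upgrade the $L^2$ decay to $C^\ell(\B)$ decay \r{43}; I expect this to be the main obstacle. Naive elliptic regularity loses powers of $|\zeta|$, because the first-order term $2\zeta\cdot\nabla$ is large. To avoid this, I will differentiate the equation: $\partial^\alpha r$ satisfies
\[
P_\zeta\partial^\alpha r=\partial^\alpha\bigl(q(1+r)\bigr).
\]
The operator $P_\zeta$ commutes with derivatives, so $\partial^\alpha G_\zeta=G_\zeta\partial^\alpha$, and I get the identity $r=G_\zeta(q)+G_\zeta(qr)$. Induction on $k$, using $\|G_\zeta f\|_{L^2_\delta}\le C|\zeta|^{-1}\|f\|_{L^2_{\delta+1}}$ applied to $\partial^\alpha(q)+\partial^\alpha(qr)$, expresses $\partial^\alpha(qr)$ through lower-order derivatives of $r$ times derivatives of $q$. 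This yields $\|r\|_{H^k_\delta}\le C_k|\zeta|^{-1}$ for every $k$, where the top-order term is absorbed for $|\zeta|\ge T_*$. Restricting to $\B$ and applying Sobolev embedding with $k>\ell+n/2$ gives \r{43}.

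Uniformity in the direction of $\zeta$ follows from rotation invariance of the constants in the $G_\zeta$ estimates. The remainder estimates can alternatively be quoted directly from \cite[Propositions~3.1 and~3.3]{BalUhlmann10}.
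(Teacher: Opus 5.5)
Your proposal is correct and follows essentially the same route as the paper: the substitution $u=v/a$ reducing to $(\Delta-q)v=0$, a Neumann series for $r^0_\zeta=G_\zeta q+G_\zeta(qr^0_\zeta)$ with the Faddeev operator on weighted spaces, commutation of $G_\zeta$ with derivatives, and Sobolev embedding on $\B$. The only difference is organizational. The paper runs the contraction directly in $H^j_\sigma$ via $\|G_\zeta f\|_{H^j_\sigma}\le C|\zeta|^{-1}\|f\|_{H^j_{\sigma+1}}$. You instead solve in $L^2_\delta$ and then bootstrap derivatives by induction with absorption. That absorption is legitimate because $r^0_\zeta$ is already smooth, so each $\partial^\alpha r^0_\zeta=G_\zeta\partial^\alpha\bigl(q(1+r^0_\zeta)\bigr)$ lies in $L^2_\delta$ a priori.
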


\begin{proof}
We use the Faddeev solution operator $G_\zeta$ for $P_\zeta=\Delta+2\zeta\cdot\nabla$, so that $P_\zeta G_\zeta f=f$; see \cite[Proposition~4.5.4 and Remark~4.5.12]{Gunther-book}. Fix $-1<\sigma<0$ and an integer $j>\ell+n/2$. Let $H_\sigma^j(\R^n)$ have norm
\[
\|f\|_{H_\sigma^j}=\sum_{|\alpha|\le j}\|\langle x\rangle^\sigma\partial^\alpha f\|_{L^2},\qquad \langle x\rangle=(1+|x|^2)^{1/2}.
\]
The weighted $L^2$ estimate and the commutation of $G_\zeta$ with derivatives imply
\begin{equation}\label{45}
\|G_\zeta f\|_{H_\sigma^j}\le\frac{C}{|\zeta|}\|f\|_{H_{\sigma+1}^j}.
\end{equation}
Since $q\in C_0^\infty(\R^n)$ is fixed, multiplication by $q$ maps $H_\sigma^j$  to $H_{\sigma+1}^j$ continuously, with norm depending on $q,j,\sigma$. Thus \r{45} implies that $f\mapsto G_\zeta(qf)$ has operator norm at most $C/|\zeta|$ on $H_\sigma^j$. Increasing $T_*$ makes this norm at most $1/2$. We seek $r_\zeta^0$ as the solution of
\begin{equation}\label{46}
r_\zeta^0=G_\zeta q+G_\zeta(qr_\zeta^0).
\end{equation}
Equation \r{46} has a Neumann-series solution in $H_\sigma^j(\R^n)$, with
\begin{equation}\label{44}
\|r_\zeta^0\|_{H_\sigma^j}\le2\|G_\zeta q\|_{H_\sigma^j}\le\frac{C}{|\zeta|}\|q\|_{H_{\sigma+1}^j}\le\frac{C}{|\zeta|},
\end{equation}
uniformly in the direction of $\zeta$. Applying $P_\zeta$ to \r{46} yields $P_\zeta r_\zeta^0=q(1+r_\zeta^0)$, and \r{38} with $B=0$ proves that \r{42} is $g_0$-harmonic. Smoothness follows from elliptic regularity. On the fixed ball $\B$, the weighted norm in \r{44} controls the ordinary $H^j$ norm, so Sobolev embedding proves \r{43}.
\end{proof}

In the integral identities, the factor $a^2B$ is paired with two gradients of solutions of the form \r{42}. Their leading factors $a^{-1}$ cancel $a^2$; choosing $\zeta+\eta=-\i\xi$ then leads to the Fourier transform of $B$, as in Section~\ref{sec2b}, and as in \cite{SylvesterU87}. The spatial derivative bounds in \r{43} will control the resulting Fourier error operators.

For $\tau\ge1$, use the norm
\begin{equation}\label{47}
X_\tau(v)=\|v\|_{L^2}+\tau^{-1}\|\nabla v\|_{L^2}+\tau^{-2}\|\nabla^2v\|_{L^2},
\end{equation}
equivalent to the norm in \r{9}. The following consequence of Lemma~\ref{lemma4} incorporates the fixed background potential.

\begin{corollary}\label{cor1}
There are constants $T_0\ge1$ and $C>0$, depending only on $\B,n$ and $\|q\|_{L^\infty}$, such that, for $v\in H^2(\R^n)$ supported in $\B$ and $\zeta\in\C^n$ with $\zeta\cdot\zeta=0$ and $|\zeta|=\sqrt2\tau$, we have
\begin{equation}\label{48}
X_\tau(v)\le\frac{C}{\tau}\|(P_\zeta-q)v\|_{L^2},\qquad \tau\ge T_0.
\end{equation}
\end{corollary}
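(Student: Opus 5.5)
The plan is to treat $q$ as a perturbation of $P_\zeta$ and absorb it using the estimate \r{10} of Lemma~\ref{lemma4}. No new Carleman estimate is needed.

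Let $v\in H^2(\R^n)$ be supported in $\B$, and let $\zeta$ satisfy $\zeta\cdot\zeta=0$ and $|\zeta|=\sqrt2\tau$ with $\tau\ge1$. Apply Lemma~\ref{lemma4} with the fixed ball $\B$. For $\tau\ge1$ we have $\langle\tau\rangle\le\sqrt2\,\tau$ and $\langle\tau\rangle^{-1}\ge(\sqrt2\,\tau)^{-1}$. Hence the left-hand side of \r{10} dominates a constant multiple of $X_\tau(v)$ from \r{47}, and \r{10} becomes
\[
X_\tau(v)\le \frac{C_1}{\tau}\|P_\zeta v\|_{L^2}.
\]
Here $C_1$ depends only on $\B$ and $n$. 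Next write $P_\zeta v=(P_\zeta-q)v+qv$ and use the triangle inequality together with $\|qv\|_{L^2}\le\|q\|_{L^\infty}\|v\|_{L^2}\le\|q\|_{L^\infty}X_\tau(v)$. This gives
\[
X_\tau(v)\le \frac{C_1}{\tau}\|(P_\zeta-q)v\|_{L^2}+\frac{C_1\|q\|_{L^\infty}}{\tau}X_\tau(v).
\]

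Choose $T_0=\max\{1,\,2C_1\|q\|_{L^\infty}\}$. For $\tau\ge T_0$ the coefficient of the last term is at most $1/2$, so that term can be absorbed into the left side. This gives \r{48} with $C=2C_1$, and the constants depend only on $\B$, $n$ and $\|q\|_{L^\infty}$.

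The only point requiring care is the absorption step, which needs $X_\tau(v)$ to be finite. This holds since $v\in H^2$ has compact support. We also need uniformity in the direction of $\zeta$, which Lemma~\ref{lemma4} already provides through rotational invariance. I expect no real obstacle. The gain of $\tau^{-1}$ in the Carleman estimate is exactly what allows the bounded potential to be absorbed for large $\tau$.
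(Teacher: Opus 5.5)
Your proposal is correct and follows essentially the same argument as the paper: apply Lemma~\ref{lemma4} on $\B$ for $\tau\ge1$, write $P_\zeta v=(P_\zeta-q)v+qv$, and absorb the $\|q\|_{L^\infty}X_\tau(v)/\tau$ term once $\tau\ge T_0$. Your explicit comparison of $\langle\tau\rangle$ with $\tau$ and the explicit choice of $T_0$ are details that the paper leaves implicit.
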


\begin{proof}
For $\tau\ge1$, Lemma~\ref{lemma4} and \r{47} imply
\[
\begin{split}
X_\tau(v)&\le\frac{C}{\tau}\|P_\zeta v\|_{L^2}\\
&\le\frac{C}{\tau}\|(P_\zeta-q)v\|_{L^2}+\frac{C\|q\|_{L^\infty}}{\tau}X_\tau(v).
\end{split}
\]
Choose $T_0$ so that $C\|q\|_{L^\infty}/T_0\le1/2$ and absorb the last term to obtain \r{48}.
\end{proof}

We now construct the $\overline{g}$-harmonic extensions of the boundary values of the background C.G.O.\ solutions of Lemma~\ref{lemma10}, taken with $\ell\ge2$, as in Proposition~\ref{pr1}.

\begin{lemma}\label{lemma11}
After increasing $T_0$ from Corollary~\ref{cor1}, there are constants $c_0,C>0$, depending only on the fixed data, such that the following holds. Let $\zeta\cdot\zeta=0$, $|\zeta|=\sqrt2\tau$, and assume
\begin{equation}\label{49}
b_s\le c_0,\qquad \tau\ge T_0,\qquad \tau b_\infty\le c_0.
\end{equation}
The $\overline{g}$-harmonic solution $u_\zeta$ in $M$ with boundary value $u_\zeta^0$ extends by $u_\zeta^0$ outside $M$ to a smooth global $\overline{g}$-harmonic function of the form
\begin{equation}\label{50}
u_\zeta=a^{-1}e^{x\cdot\zeta}(1+r_\zeta^0+r_\zeta),\qquad r_\zeta\in C_0^\infty(\R^n),\qquad \supp r_\zeta\subset M,
\end{equation}
with
\begin{equation}\label{51}
X_\tau(r_\zeta)\le C\tau b_2.
\end{equation}
\end{lemma}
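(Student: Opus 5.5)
The proof follows Proposition~\ref{pr1}, with the Euclidean exponential replaced by the background solution $u_\zeta^0$ of Lemma~\ref{lemma10}, and with \r{38} used in place of \r{4}.

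\textbf{Step 1: the global extension.} Let $u$ solve $L_{A_{\overline g}}u=0$ in $M$ with $u|_{\partial M}=u_\zeta^0|_{\partial M}$, and extend it by $u_\zeta^0$ outside $M$. The equality of DN maps transfers from $g$ to $\overline g$, since $\Phi$ fixes $\partial M$ pointwise. By Lemma~\ref{lemma7}, $\overline g=g_0$ to infinite order on $\partial M$, so normals and conormals agree there. Hence the Cauchy data of $u$ and $u_\zeta^0$ match, and the glued function is a distributional solution of $L_{A_{\overline g}}U=0$ on $\R^n$. Elliptic regularity makes $U$ smooth. Define $r_\zeta$ by $U=a^{-1}e^{x\cdot\zeta}(1+r_\zeta^0+r_\zeta)$. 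Then $r_\zeta$ is smooth and vanishes outside $M$, which gives \r{50}.

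\textbf{Step 2: the equation for $r_\zeta$.} Set $v=aU=e^{x\cdot\zeta}(1+r^0_\zeta+r_\zeta)$. By \r{38}, $(\Delta-q)v+\diver(B\nabla v)-q_Bv=0$. By Lemma~\ref{lemma10}, $(\Delta-q)(e^{x\cdot\zeta}(1+r^0_\zeta))=0$. Conjugating by $e^{x\cdot\zeta}$ and writing $\phi=1+r_\zeta^0$, this gives
\[
(P_\zeta-q)r_\zeta=-e^{-x\cdot\zeta}\diver\bigl(B\nabla(e^{x\cdot\zeta}(\phi+r_\zeta))\bigr)+q_B(\phi+r_\zeta).
\]
We expand $e^{-x\cdot\zeta}\diver(B\nabla(e^{x\cdot\zeta}w))=(\diver B)\cdot(\nabla w+\zeta w)+B^{ij}(\partial_i\partial_jw+2\zeta_i\partial_jw+\zeta_i\zeta_jw)$.

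\textbf{Step 3: the source term} (the terms with $w=\phi$). Use \r{43} with $\ell\ge2$, which gives $\|\phi\|_{C^2(\B)}\le C$ for $\tau\ge T_0\ge T_*$. Use also the $L^2$ bounds \r{39}, namely $\|\diver B\|_{L^2}+\|q_B\|_{L^2}\le Cb_2$. Since $|\zeta|=\sqrt2\tau$ and $\tau\ge1$, the source has $L^2$ norm at most $C\tau^2b_2$.

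\textbf{Step 4: the terms in $r_\zeta$.} The second-order part is bounded by $Cb_\infty(\|\nabla^2r_\zeta\|+\tau\|\nabla r_\zeta\|+\tau^2\|r_\zeta\|)\le C\tau^2b_\infty X_\tau(r_\zeta)$. For the lower-order part we use the $L^\infty$ bounds in \r{39}, not the $L^2$ ones: $\|\diver B\|_{L^\infty}+\|q_B\|_{L^\infty}\le Cb_s$. This gives a bound $Cb_s(\|\nabla r_\zeta\|+\tau\|r_\zeta\|+\|r_\zeta\|)\le C\tau b_sX_\tau(r_\zeta)$.

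\textbf{Step 5: absorption.} Corollary~\ref{cor1} applies because $r_\zeta$ is supported in $M\subset\B$. It yields
\[
X_\tau(r_\zeta)\le C\tau b_2+C\tau b_\infty X_\tau(r_\zeta)+Cb_sX_\tau(r_\zeta).
\]
The assumptions \r{49} give $\tau b_\infty\le c_0$ and $b_s\le c_0$. Choosing $c_0$ small, both error terms are absorbed into the left side, and \r{51} follows.

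The main obstacle is the first-order term $(\diver B)\cdot\zeta r_\zeta$. With only an $L^2$ bound on $\diver B$, the product $\|(\diver B)\,r_\zeta\|_{L^2}$ cannot be controlled by $X_\tau(r_\zeta)$. This is why the lower-order coefficients are placed in $L^\infty$ via $b_s$, giving the $\tau b_s$ loss that the Carleman factor $\tau^{-1}$ compensates. The $L^2$ bound of Lemma~\ref{lemma8} is used only on the source term, which keeps the right side linear in $b_2$. Increasing $T_0$ is needed only to make Lemma~\ref{lemma10} (with $\ell=2$) applicable alongside Corollary~\ref{cor1}.
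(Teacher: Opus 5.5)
Your proposal is correct and follows the paper's proof essentially step by step. It uses the same gluing across $\partial M$ via the DN equality and Lemma~\ref{lemma7}, and the same conjugated equation \eqref{52}. It bounds the source term by $C\tau^2b_2$ using the $L^2$ bounds in \eqref{39}, and the error terms by $C(\tau^2b_\infty+\tau b_s)X_\tau(r_\zeta)$ using the $L^\infty$ bounds, then absorbs them after Corollary~\ref{cor1}; your remark that the lower-order coefficients must be taken in $L^\infty$ matches the paper's explanation of the extra $b_s$ term.
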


\begin{proof}
The Dirichlet solution is smooth up to $\partial M$ by elliptic regularity. The extension argument of Proposition~\ref{pr1} applies: \r{15}, invariance of the DN map under $\Phi$, and Lemma~\ref{lemma7} imply that the boundary values and conormal fluxes of $u_\zeta$ and $u_\zeta^0$ agree. Thus the extension is $\overline{g}$-harmonic in the distributional sense across $\partial M$. The extended metric is smooth by Lemma~\ref{lemma7}, so elliptic regularity makes the extension smooth as well. Consequently,
\[
r_\zeta=a e^{-x\cdot\zeta}(u_\zeta-u_\zeta^0)\in C_0^\infty(\R^n),\qquad \supp r_\zeta\subset M,
\]
which proves \r{50}.

Conjugating \r{38} by $e^{x\cdot\zeta}$ and using $(P_\zeta-q)(1+r_\zeta^0)=0$, we obtain
\begin{equation}\label{52}
\begin{split}
(P_\zeta-q)r_\zeta&=-\bigl[B^{ij}(\partial_i+\zeta_i)(\partial_j+\zeta_j)\\
&\qquad +(\partial_iB^{ij})(\partial_j+\zeta_j)-q_B\bigr](1+r_\zeta^0+r_\zeta).
\end{split}
\end{equation}
Choose $T_0$ large enough that Lemma~\ref{lemma10} applies with $\ell\ge2$. Then \r{43} implies
\[
\|1+r_\zeta^0\|_{L^\infty(M)}\le C,\qquad \|\nabla r_\zeta^0\|_{L^\infty(M)}+\|\nabla^2r_\zeta^0\|_{L^\infty(M)}\le C\tau^{-1}.
\]
Together with the $L^2$ bounds in \r{39}, these estimates bound the part of the right side of \r{52} acting on $1+r_\zeta^0$ by $C\tau^2b_2$. For the part acting on $r_\zeta$, the definition \r{47} bounds the terms with coefficient $B$ by $C\tau^2b_\infty X_\tau(r_\zeta)$, while the $L^\infty$ bounds in \r{39} control those with coefficients $\diver B$ and $q_B$ by $C\tau b_sX_\tau(r_\zeta)$. Therefore,
\begin{equation}\label{53}
\|(P_\zeta-q)r_\zeta\|_{L^2}\le C\tau^2b_2+C(\tau^2b_\infty+\tau b_s)X_\tau(r_\zeta).
\end{equation}
Applying Corollary~\ref{cor1} to \r{53} yields
\begin{equation}\label{54}
X_\tau(r_\zeta)\le C\tau b_2+C(\tau b_\infty+b_s)X_\tau(r_\zeta).
\end{equation}
Compared with Proposition~\ref{pr1}, the additional $b_s$ term comes from $\diver B$ and $q_B$; the fixed potential $q$ has already been absorbed in Corollary~\ref{cor1}. Choose $c_0$ so that $2Cc_0\le1/2$. Under \r{49}, the last term in \r{54} can then be absorbed, proving \r{51}.
\end{proof}

\subsection{Fourier estimates}\label{sec3g}
We use the polarization construction in \r{eta}, retaining the components in the direction $\omega$ that vanish in Section~\ref{sec2} but are only estimated here by Lemma~\ref{lemma8}. 

\begin{lemma}\label{lemma12}
For every $\omega\in\S^{n-1}$, one can choose $L=L(n)$ pairs $(z_\ell,e_\ell)\in\C^n\times\C^n$ such that
\[
z_\ell\cdot z_\ell=e_\ell\cdot e_\ell=0,\qquad z_\ell+e_\ell=-\i\omega,\qquad |z_\ell|=|e_\ell|=\sqrt2,
\]
and, for every complex symmetric matrix $S$,
\begin{equation}\label{55}
|S|\le C\left(\sum_{\ell=1}^L|S(z_\ell,e_\ell)|+|S\omega|\right).
\end{equation}
Here $S(z,e)=S^{ij}z_i e_j$, $|S|$ is the Frobenius norm, and $C$ depends only on $n$. The vectors may be chosen piecewise smoothly in $\omega$.
\end{lemma}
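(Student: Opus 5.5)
We reuse the construction \r{eta} from Section~\ref{sec2}, scaled to $\rho=1$. Fix $\omega$ and an orthonormal pair $a,b\in\omega^\perp$ (possible since $n\ge3$), and set
\[
e_\pm=-a+\i\Bigl(-\tfrac{\omega}{2}\pm\tfrac{\sqrt3}{2}b\Bigr),\qquad
z_\pm=a+\i\Bigl(-\tfrac{\omega}{2}\mp\tfrac{\sqrt3}{2}b\Bigr).
\]
These satisfy $z_\pm+e_\pm=-\i\omega$, $z_\pm\cdot z_\pm=e_\pm\cdot e_\pm=0$ and $|z_\pm|=|e_\pm|=\sqrt2$. The family of pairs is then obtained by letting $(a,b)$ range over ordered pairs from an orthonormal basis $\{f_1,\dots,f_{n-1}\}$ of $\omega^\perp$, using both $\pm$ choices. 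This gives $L=2(n-1)(n-2)$ pairs.

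The key step is to expand $S(z_\pm,e_\pm)$. One slight difference from Section~\ref{sec2} is that there the pairing was $\widehat B(\eta,\eta)$, whereas the statement here uses the mixed form $S(z,e)$. We compute $S(z,e)$ directly by bilinearity. Terms containing $\omega$ are of the form $S(\omega,v)$ with $v$ a fixed bounded vector, so they are bounded by $C|S\omega|$. Modulo those terms we have
\[
S(z_\pm,e_\pm)\equiv -S(a,a)+\Bigl(\pm\tfrac{\sqrt3}{2}\i\,\cdot\,\mp\tfrac{\sqrt3}{2}\i\Bigr)S(b,b)+\text{cross terms},
\]
that is, $-S(a,a)+\tfrac34 S(b,b)$ plus cross terms. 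The cross terms come from $S(a,\i(\pm\tfrac{\sqrt3}{2})b)$ and $S(\mp\tfrac{\sqrt3}{2}\i b,-a)$; by symmetry of $S$ these combine to $\pm\i\sqrt3\,S(a,b)$. Hence
\[
\bigl|{-S(a,a)}+\tfrac34S(b,b)\pm\i\sqrt3 S(a,b)\bigr|\le |S(z_\pm,e_\pm)|+C|S\omega|.
\]
Taking half the sum and half the difference of the $\pm$ estimates bounds $|{-S(a,a)}+\tfrac34S(b,b)|$ and $|S(a,b)|$. Swapping $a$ and $b$ bounds $|{-S(b,b)}+\tfrac34S(a,a)|$. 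The $2\times2$ system with matrix $\begin{pmatrix}-1&3/4\\3/4&-1\end{pmatrix}$ has determinant $1-9/16\ne0$, so we can solve it for $S(a,a)$ and $S(b,b)$.

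This bounds every entry of $S$ in the orthonormal basis $\{f_1,\dots,f_{n-1},\omega\}$. The entries involving $\omega$ are exactly the components of $S\omega$, which appear on the right side of \r{55} directly. Since the Frobenius norm is invariant under orthogonal changes of basis, \r{55} follows with $C=C(n)$. The constants involved are absolute numbers, independent of $\omega$.

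The only delicate point, which we expect to be the main obstacle, is choosing the basis of $\omega^\perp$ piecewise smoothly in $\omega$. There is no global smooth frame, for instance on $\S^2$. To handle this, cover $\S^{n-1}$ by finitely many closed sets. On each, one coordinate vector $\e_k$ is far from $\pm\omega$, say $|\omega\cdot \e_k|\le 1/\sqrt2$ or some similar choice. On such a set, apply Gram--Schmidt to $\omega$ followed by the remaining standard basis vectors, ordered so that a vector with a nondegenerate projection onto $\omega^\perp$ comes first. This produces a smooth orthonormal frame of $\omega^\perp$ there, and the bound on $C$ is unaffected because it never depended on the frame.
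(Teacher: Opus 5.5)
Your proof is correct and is essentially the paper's: you use the same vectors \r{eta} with $\rho=1$ over ordered pairs from an orthonormal basis of $\omega^\perp$, the same expansion $S(z_\pm,e_\pm)=-S(a,a)+\tfrac34S(b,b)\pm\i\sqrt3\,S(a,b)+O(|S\omega|)$, the same invertible $2\times2$ system, and you absorb the $\omega$-entries into $|S\omega|$. One small repair is needed in your frame construction, which the paper handles in a single sentence: Gram--Schmidt on $\omega$ followed by $n-1$ coordinate vectors is uniformly nondegenerate if you \emph{omit} a coordinate vector $\e_m$ with $|\omega\cdot\e_m|\ge1/\sqrt n$ (the determinant of $(\omega,\e_j)_{j\ne m}$ is $\pm\omega_m$), whereas choosing $\e_k$ with $|\omega\cdot\e_k|$ small only guarantees that the first step is nondegenerate.
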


\begin{proof}
Choose an orthonormal basis of $\omega^\perp$. For each ordered pair $p,q$ of distinct basis vectors, use \r{eta} with $\rho=1$, $a=p$, and $b=q$, and write $z_\pm=\zeta_\pm$, $e_\pm=\eta_\pm$. These vectors satisfy the stated null, sum, and norm conditions. The calculation leading to \r{13}, with the $S(\omega,\omega)$ term retained, reads
\[
S(z_\pm,e_\pm)=-S(p,p)+\frac34 S(q,q)-\frac14 S(\omega,\omega)\pm\i\sqrt3 S(p,q).
\]
Taking the difference controls $S(p,q)$; taking the sum controls $S(p,p)-\frac34 S(q,q)$ up to $C|S\omega|$. Interchanging $p,q$ gives the same invertible two-by-two system as in Section~\ref{sec2}, so both diagonal entries are controlled as well. Since $n\ge3$, every basis vector has an orthogonal partner in $\omega^\perp$. Thus all entries of $S$ restricted to $\omega^\perp$ are controlled, while the remaining entries are bounded by $|S\omega|$. This proves \r{55}, uniformly in $\omega$. Smooth local orthonormal frames on a finite cover of the sphere provide the asserted piecewise smooth choice.
\end{proof}

For $S=\widehat B(\xi)$ and $\omega=\xi/|\xi|$, the additional term in \r{55} is controlled by Lemma~\ref{lemma8}. Indeed, $\widehat{\diver B}(\xi)=\i\widehat B(\xi)\xi$, so Plancherel's identity and \r{30} imply
\begin{equation}\label{56}
\left\|\mathbf1_{\{|\xi|>T\}}\widehat B(\xi)\frac{\xi}{|\xi|}\right\|_{L^2}\le\frac{C}{T}\|\diver B\|_{L^2}\le\frac{C}{T}b_2,\qquad T>0.
\end{equation}

We next estimate the errors in the background C.G.O.\ amplitudes.

\begin{lemma}\label{lemma13}
Let $e(x,\xi)=(e^{ij}(x,\xi))$ be measurable in $\xi$ and $C^{n+1}$ in $x$, with support in a fixed compact set in $x$, and suppose that
\begin{equation}\label{57}
|\partial_x^\alpha e(x,\xi)|\le C|\xi|^{-1},\qquad |\alpha|\le n+1,\quad |\xi|\ge T_0.
\end{equation}
For a square-integrable matrix field $B$, define
\begin{equation}\label{58}
E_TB(\xi)=\mathbf1_{\{|\xi|>T\}}\int_{\R^n}e^{-\i x\cdot\xi}e^{ij}(x,\xi)B^{ij}(x)\d x,\qquad T\ge T_0.
\end{equation}
Then
\begin{equation}\label{59}
\|E_TB\|_{L^2}\le\frac{C}{T}\|B\|_{L^2},
\end{equation}
where $C$ depends only on $n$, the fixed compact support, and the constant in \r{57}.
\end{lemma}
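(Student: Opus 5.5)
We will treat $E_T$ as a pseudo-differential-type operator and obtain the $1/T$ gain by trading the $x$-smoothness of the amplitude for decay in the dual variable. Write $\widehat e^{ij}(\theta,\xi)=\int e^{-\i x\cdot\theta}e^{ij}(x,\xi)\,\d x$ for the partial Fourier transform in $x$. By Parseval,
\[
\int e^{-\i x\cdot\xi}e^{ij}(x,\xi)B^{ij}(x)\,\d x=(2\pi)^{-n}\int \widehat e^{ij}(\xi-\eta,\xi)\,\widehat B^{ij}(\eta)\,\d\eta ,
\]
so that $E_TB(\xi)=\mathbf 1_{\{|\xi|>T\}}(2\pi)^{-n}\int K(\xi,\eta)\widehat B(\eta)\,\d\eta$ with kernel $K^{ij}(\xi,\eta)=\widehat e^{ij}(\xi-\eta,\xi)$. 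The plan is to bound this integral operator on $L^2(\R^n_\eta)\to L^2(\R^n_\xi)$ by Schur's test, and then to use Plancherel, $\|\widehat B\|_{L^2}=(2\pi)^{n/2}\|B\|_{L^2}$.

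First we bound the kernel. Since $e(\cdot,\xi)$ is supported in a fixed compact set $K_0$, integrating by parts up to $n+1$ times in $x$ and using \r{57} gives
\[
|\widehat e(\theta,\xi)|\le C\,|K_0|\,|\xi|^{-1}\langle\theta\rangle^{-(n+1)},\qquad |\xi|\ge T_0 .
\]
This step is where the hypothesis $|\alpha|\le n+1$ enters: it makes $\langle\theta\rangle^{-(n+1)}$ integrable on $\R^n$. Measurability in $\xi$ makes the kernel jointly measurable, since $\widehat e$ is continuous in $\theta$ for fixed $\xi$.

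Next we apply Schur's test with $|K(\xi,\eta)|\le C|\xi|^{-1}\langle\xi-\eta\rangle^{-(n+1)}$ on the region $|\xi|>T\ge T_0$.
\begin{itemize}
\item The row integrals satisfy $\sup_{|\xi|>T}\int|K|\,\d\eta\le C T^{-1}\int\langle\theta\rangle^{-n-1}\d\theta\le C/T$.
\item The column integrals satisfy $\sup_\eta\int_{|\xi|>T}|K|\,\d\xi\le CT^{-1}\int\langle\xi-\eta\rangle^{-n-1}\d\xi\le C/T$, because $|\xi|^{-1}\le T^{-1}$ on the region.
\end{itemize}
Schur's lemma then bounds the operator norm by the geometric mean of these two quantities, namely $C/T$. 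This proves \r{59} after applying Plancherel and summing over the finitely many components $i,j$. The constant depends only on $n$, $|K_0|$ and the constant in \r{57}.

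There is no real obstacle here. The only point needing care is that the factor $|\xi|^{-1}$ must be placed in the kernel before running Schur's test, so that the cutoff $|\xi|>T$ supplies the uniform factor $1/T$ in both the row and column bounds.
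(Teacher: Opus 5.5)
Your proof is correct and is essentially the paper's argument. The paper passes to the formal adjoint $\operatorname{Op}(\mathbf 1_{\{|\xi|>T\}}\overline{e^{ij}})$ and cites H\"ormander's Theorem~18.1.11$'$, which needs only $\sup_\xi\sum_{|\alpha|\le n+1}\|\partial_x^\alpha(\mathbf 1_{\{|\xi|>T\}}e(\cdot,\xi))\|_{L^1}\le C/T$. That theorem is proved by exactly your kernel bound $|\widehat e(\theta,\xi)|\le C|\xi|^{-1}\langle\theta\rangle^{-n-1}$ followed by Schur's test, so you have written out that proof directly for $E_T$; no adjoint is needed because Schur's test is symmetric, and the lemma becomes self-contained.
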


\begin{proof}
The formal adjoint of \r{58} satisfies
\[
(2\pi)^{-n}(E_T^*\widehat f)^{ij}=\operatorname{Op}\bigl(\mathbf1_{\{|\xi|>T\}}\overline{e^{ij}}\bigr)f,
\]
where $\operatorname{Op}$ is with the left quantization. We are not claiming that its symbol satisfies the standard symbol estimates. 
The compact support and \r{57} imply
\[
\sup_\xi\sum_{|\alpha|\le n+1}\big\|\partial_x^\alpha\bigl(\mathbf1_{\{|\xi|>T\}}\overline{e^{ij}(\bullet,\xi)}\bigr)\big\|_{L^1}\le\frac{C}{T}.
\]
Applying \cite[Theorem~18.1.11$'$, p.~75]{Hormander3} componentwise, followed by Plancherel's identity and duality, proves \r{59}. The theorem requires no derivatives in $\xi$, so the sharp cutoff is allowed.
\end{proof}

We combine the preceding estimates with the frequency splitting used in Section~\ref{sec2}.

\begin{lemma}\label{lemma14}
After increasing $T_0$ and decreasing $c_0$ from Lemma~\ref{lemma11}, if necessary, the conductivity perturbation $B$ in \r{20} satisfies
\begin{equation}\label{60}
\big\|\mathbf1_{\{|\xi|>T\}}\widehat B\big\|_{L^2}\le C\bigl(R^{n/2+1}b_2^2+T^{-1}b_2+R^{-s}b_s\bigr)
\end{equation}
whenever
\begin{equation}\label{61}
b_s\le c_0,\qquad R\ge T\ge T_0,\qquad Rb_\infty\le c_0.
\end{equation}
The constant $C$ depends only on the fixed data and is independent of $B,T,R$.
\end{lemma}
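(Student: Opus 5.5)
We split the frequency region $\{|\xi|>T\}$ into $T<|\xi|\le R$ and $|\xi|>R$. On $|\xi|>R$ the a priori Sobolev bound gives
\[
\big\|\mathbf1_{\{|\xi|>R\}}\widehat B\big\|_{L^2}\le CR^{-s}b_s,
\]
exactly as in Section~\ref{sec2}. The work is on the annulus $T<|\xi|\le R$. There we apply Lemma~\ref{lemma12} with $S=\widehat B(\xi)$ and $\omega=\xi/|\xi|$. The term $|S\omega|$ is handled by \r{56}, which yields the $T^{-1}b_2$ contribution. It remains to bound $\widehat B(\xi)(z_\ell,e_\ell)$ in $L^2$ on the annulus, for each $\ell$.

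For fixed $\xi=\rho\omega$ with $T_0\le T<\rho\le R$, set $\zeta=\rho z_\ell$ and $\eta=\rho e_\ell$. Then $\zeta\cdot\zeta=\eta\cdot\eta=0$, $\zeta+\eta=-\i\xi$, and $\tau=\rho$. Condition \r{61} gives $\tau b_\infty\le Rb_\infty\le c_0$, so Lemma~\ref{lemma11} provides the $\overline g$-harmonic function $u_\zeta$ in \r{50}. For the $g_0$-harmonic function we take $v=u^0_\eta$ from Lemma~\ref{lemma10}. We insert both into the identity \r{19}, transported to $\overline g$ (this is allowed since $\Phi$ fixes $\partial M$ and the DN map is gauge invariant). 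This gives
\[
\int a^2B^{ij}\,\partial_i u_\zeta\,\partial_j u^0_\eta\,\d x=0.
\]
We expand $\partial_i\big(a^{-1}e^{x\cdot\zeta}(1+r^0_\zeta+r_\zeta)\big)$ and the corresponding expression for $\eta$. The factors $a^{-1}$ cancel $a^2$ and the exponentials combine to $e^{-\i x\cdot\xi}$. The leading term is then $\int e^{-\i x\cdot\xi}B^{ij}\zeta_i\eta_j\,\d x=\rho^2\,\widehat B(\xi)(z_\ell,e_\ell)$. Dividing by $\rho^2$, the remaining terms fall into two classes.

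The first class consists of terms linear in $B$ whose amplitudes involve only the background quantities $r^0_\zeta,r^0_\eta$, $\nabla a$, and their derivatives, together with at most one factor $\zeta/\rho$ or $\eta/\rho$. By \r{43} with $\ell\ge n+3$, each such amplitude has the form $e^{ij}(x,\xi)$ with $|\partial_x^\alpha e|\le C|\xi|^{-1}$. Two points need care here. The amplitude must be measurable in $\xi$; we get this from the piecewise smooth choice in Lemma~\ref{lemma12} together with a measurable selection of the C.G.O.\ family. Also, the terms containing $\nabla a$ or $\nabla r^0$ without a compensating factor of $\rho$ already carry $\rho^{-1}$ after dividing by $\rho^2$. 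With these in place, Lemma~\ref{lemma13} bounds the $L^2(\{|\xi|>T\})$ norm of this error by $CT^{-1}b_2$, where we increase $T_0$ if needed.

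The second class consists of terms containing $r_\zeta$. These we bound pointwise in $\xi$ by Cauchy--Schwarz:
\[
\rho^{-2}\Big|\int a^2B^{ij}\,\partial_i(\cdots r_\zeta)\,\partial_j(\cdots)\Big|\le C\rho^{-2}\,b_2\,\big(\rho\|r_\zeta\|_{L^2}+\|\nabla r_\zeta\|_{L^2}\big)\,\rho\le C b_2\,X_\tau(r_\zeta)\le C\rho\,b_2^2,
\]
where the last step uses \r{51}. Taking the $L^2$ norm of this bound over $|\xi|\le R$ gives $CR^{n/2+1}b_2^2$. Summing the two classes over $\ell$ and adding the high-frequency part proves \r{60}.

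The main obstacle is the bookkeeping for the background-error terms. Every term not involving $r_\zeta$ must genuinely carry a factor $|\xi|^{-1}$ with $x$-derivative control, so that Lemma~\ref{lemma13} applies uniformly in $\xi$. In particular, the terms involving $\nabla a$ need an explicit check that they come with $\rho^{-1}$ rather than $\rho^0$.
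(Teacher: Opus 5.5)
Your proposal is correct and is essentially the paper's proof: the paper likewise writes $u_\zeta=u_\zeta^0+(u_\zeta-u_\zeta^0)$ in the identity \r{19}, treats the background part as $\rho^2\widehat B(\xi)(z_\ell,e_\ell)$ plus an error operator handled by Lemma~\ref{lemma13} (using the $C^{n+2}$ version of \r{43}), and bounds the $r_\zeta$ part pointwise by Cauchy--Schwarz and \r{51} to get $C\rho b_2^2$, before applying Lemma~\ref{lemma12}, \r{56}, and the $H^s$ tail bound. One small bookkeeping correction: the background error also contains the terms $z_{\ell i}e_{\ell j}\,(r^0_\zeta+r^0_\eta+r^0_\zeta r^0_\eta)$, which carry both null factors rather than at most one; these are still $O(|\xi|^{-1})$ in $C^{n+1}$ by \r{43}, so your conclusion is unaffected.
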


\begin{proof}
Take the background solutions of Lemma~\ref{lemma10} with the $C^{n+2}(\B)$ bound in \r{43}, increasing $T_0$ accordingly. First let $T<|\xi|\le R$, write $\xi=\rho\omega$, and choose $(z_\ell,e_\ell)$ as in Lemma~\ref{lemma12}. For each pair set $\zeta=\rho z_\ell$ and $\eta=\rho e_\ell$. Then $\zeta+\eta=-\i\xi$, both vectors are null, and $|\zeta|=|\eta|=\sqrt2\rho$. Conditions \r{61} allow us to apply Lemma~\ref{lemma11} with $\tau=\rho$.

The identity \r{19}, applied to $\overline{g}$ using invariance of the DN map, and \r{20} imply
\begin{equation}\label{62}
\int_M a^2B^{ij}\partial_i u_\zeta^0\,\partial_j u_\eta^0\d x=-\int_M a^2B^{ij}\partial_i(u_\zeta-u_\zeta^0)\,\partial_j u_\eta^0\d x.
\end{equation}
By \r{50},
\[
a e^{-x\cdot\zeta}\nabla(u_\zeta-u_\zeta^0)=\nabla r_\zeta+(\zeta-\nabla\log a)r_\zeta.
\]
Thus \r{51} and the background estimate \r{43} yield
\begin{equation}\label{63}
\big\|a e^{-x\cdot\zeta}\nabla(u_\zeta-u_\zeta^0)\big\|_{L^2(M)} \le C\rho^2b_2,\quad 
\big\|a e^{-x\cdot\eta}\nabla u_\eta^0\big\|_{L^\infty(M)} \le C\rho.
\end{equation}
In \r{62}, the two gradient factors contribute $a^{-2}e^{x\cdot(\zeta+\eta)}=a^{-2}e^{-\i x\cdot\xi}$, so $a^2$ cancels and the remaining exponential has modulus one. Cauchy--Schwarz and \r{63} therefore imply
\begin{equation}\label{64}
\rho^{-2}\left|\int_M a^2B^{ij}\partial_i u_\zeta^0\,\partial_j u_\eta^0\d x\right|\le C\rho b_2^2.
\end{equation}

For the background amplitudes, \r{42}--\r{43} imply
\begin{equation}\label{65}
\frac{a e^{-x\cdot\zeta}}{\rho}\nabla u_\zeta^0
 =\frac{\zeta}{\rho}(1+r_\zeta^0)+\frac{\nabla r_\zeta^0-(1+r_\zeta^0)\nabla\log a}{\rho} =z_\ell+O_{C^{n+1}(\B)}(\rho^{-1}),
\end{equation}
and similarly with $\eta,e_\ell$. These expansions hold for every $\rho\ge T_0$. Fix $\chi\in C_0^\infty(\B^{\rm int})$ equal to one near $M$ and set
\[
e_\ell^{ij}(x,\xi)=\chi(x)\left[\rho^{-2}a^2e^{-x\cdot(\zeta+\eta)}\partial_i u_\zeta^0\,\partial_j u_\eta^0-z_{\ell i}e_{\ell j}\right],
\]
extending it by zero for $|\xi|<T_0$. By \r{65}, this amplitude satisfies \r{57}. It is measurable in $\xi$ by the phase choice and the Neumann-series construction in Lemma~\ref{lemma10}. Let $E_{\ell,T}$ be the operator \r{58} with amplitude $e_\ell^{ij}$. Since $\supp B\subset M$, for $T<|\xi|\le R$ we have
\begin{equation}\label{66}
\rho^{-2}\int_M a^2B^{ij}\partial_i u_\zeta^0\,\partial_j u_\eta^0\d x=\widehat B(\xi)(z_\ell,e_\ell)+E_{\ell,T}B(\xi).
\end{equation}
Combining \r{64}, \r{66}, and Lemma~\ref{lemma13}, we obtain
\begin{equation}\label{67}
\begin{split}
\big\|\mathbf1_{\{T<|\xi|\le R\}}\widehat B(\xi)(z_\ell,e_\ell)\big\|_{L^2}
&\le Cb_2^2\left(\int_{T<|\xi|\le R}|\xi|^2\d\xi\right)^{1/2}+CT^{-1}b_2\\
&\le C\bigl(R^{n/2+1}b_2^2+T^{-1}b_2\bigr).
\end{split}
\end{equation}
Lemma~\ref{lemma12} and \r{56} now control all components:
\begin{equation}\label{68}
\big\|\mathbf1_{\{T<|\xi|\le R\}}\widehat B\big\|_{L^2}\le C\bigl(R^{n/2+1}b_2^2+T^{-1}b_2\bigr).
\end{equation}
Finally, the Sobolev bound, as in Section~\ref{sec2}, implies
\[
\big\|\mathbf1_{\{|\xi|>R\}}\widehat B\big\|_{L^2}\le CR^{-s}\|B\|_{H^s}=CR^{-s}b_s.
\]
Adding this to \r{68} proves \r{60}.
\end{proof}

\subsection{Compactness}\label{sec3h}

\begin{lemma}\label{lemma15}
Let $B_j\ne0$ be conductivity perturbations as in Lemma~\ref{lemma14}, with the same fixed data, and write $b_{2,j},b_{s,j},b_{\infty,j}$ for their norms as in \r{34}. If $b_{s,j}\to0$, then a subsequence of $H_j=B_j/b_{2,j}$ converges strongly in $L^2(\R^n)$ to a tensor $H$ satisfying
\[
\supp H\subset M,\qquad \|H\|_{L^2}=1.
\]
\end{lemma}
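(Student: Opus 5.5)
The approach is Fourier-side compactness: $H_j$ is bounded in $L^2$ with support in the fixed compact set $M$, so it suffices to prove uniform smallness of the high-frequency tail of $\widehat H_j$. Low frequencies are then handled by Rellich-type compactness, i.e.\ equicontinuity of $\widehat H_j$ on bounded sets.

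\emph{Tail estimate.} Since $\|H_j\|_{L^2}=1$ and $\supp H_j\subset M$, we have $|\widehat H_j(\xi)|\le |M|^{1/2}$ and $|\nabla_\xi\widehat H_j|\le C$ uniformly. Fix $T\ge T_0$ and divide \r{60} by $b_{2,j}$:
\[
\big\|\mathbf1_{\{|\xi|>T\}}\widehat H_j\big\|_{L^2}\le C\bigl(R^{n/2+1}b_{2,j}+T^{-1}+R^{-s}b_{s,j}/b_{2,j}\bigr),
\]
valid whenever $R\ge T$ and $Rb_{\infty,j}\le c_0$ (for large $j$, $b_{s,j}\le c_0$). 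The key step, and the main obstacle, is choosing $R=R_j$ so that both $R_j^{n/2+1}b_{2,j}\to0$ and $R_j^{-s}b_{s,j}/b_{2,j}\to0$, while $R_jb_{\infty,j}\le c_0$. I would mimic Section~\ref{sec2} and take $R_j=L(b_{s,j}/b_{2,j})^{1/s}$ with $L$ large, which makes the last term $\le CL^{-s}$. By \r{36}, $R_jb_{\infty,j}\le CL\,b_{s,j}^{1/s}b_{2,j}^{-1/s}b_{2,j}^{1/s}b_{s,j}^{1-1/s}=CL\,b_{s,j}\to0$, so this choice is admissible. Moreover $R_j^{n/2+1}b_{2,j}=L^{n/2+1}b_{s,j}^{(n+2)/(2s)}b_{2,j}^{1-(n+2)/(2s)}\le L^{n/2+1}b_{s,j}\to0$, because $s>n/2+1$ and $b_{2,j}\le b_{s,j}$. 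Also $R_j\ge L\ge T$ once $L\ge T$. Hence $\limsup_j\|\mathbf1_{\{|\xi|>T\}}\widehat H_j\|\le C(T^{-1}+L^{-s})$. Letting $L\to\infty$ with $T$ fixed, the tail bound is $C/T$, uniformly in large $j$.

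\emph{Conclusion.} Pass to a subsequence with $H_j\rightharpoonup H$ weakly in $L^2$; then $\supp H\subset M$. The functions $\widehat H_j(\xi)=\int_M e^{-\i x\xi}H_j\,\d x$ converge pointwise to $\widehat H$ and are uniformly bounded, so $\widehat H_j\to\widehat H$ in $L^2(\{|\xi|\le T\})$ by dominated convergence. Combined with the $C/T$ tail bound, which also holds for $\widehat H$ by weak lower semicontinuity, this gives $\limsup\|H_j-H\|_{L^2}\le C/T$ for every $T$. Hence the convergence is strong, and $\|H\|_{L^2}=1$.
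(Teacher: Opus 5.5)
Your proof is correct and follows essentially the same route as the paper. You divide \r{60} by $b_{2,j}$ with a $j$-dependent cutoff $R_j$ to get $\limsup_j\|\mathbf1_{\{|\xi|>T\}}\widehat H_j\|_{L^2}\le C/T$, and then combine weak compactness, strong convergence on $\{|\xi|\le T\}$, and weak lower semicontinuity, exactly as the paper does. The differences are cosmetic. The paper takes $R_j=b_{s,j}^{-1/(n+2)}(b_{s,j}/b_{2,j})^{1/s}$, so every $R$-dependent term tends to zero directly, whereas you use $R_j=L(b_{s,j}/b_{2,j})^{1/s}$ and then send $L\to\infty$. For low frequencies the paper uses Hilbert--Schmidt compactness of $f\mapsto\widehat f|_{\{|\xi|\le T\}}$, while you use pointwise convergence plus dominated convergence; your gradient bound on $\widehat H_j$ is not actually needed for that step.
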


\begin{proof}
Since $b_{2,j}\le b_{s,j}$, the cutoffs
\begin{equation}\label{69}
R_j=b_{s,j}^{-1/(n+2)}\left(\frac{b_{s,j}}{b_{2,j}}\right)^{1/s}
\end{equation}
tend to infinity. By \r{36},
\[
R_jb_{\infty,j}\le Cb_{s,j}^{\,1-1/(n+2)}\longrightarrow0.
\]
Moreover, \r{69} and $s>n/2+1$ imply
\begin{equation}\label{70}
\begin{split}
R_j^{n/2+1}b_{2,j}
&=b_{s,j}^{1/2}\left(\frac{b_{2,j}}{b_{s,j}}\right)^{1-(n/2+1)/s}\le b_{s,j}^{1/2},\\
R_j^{-s}\frac{b_{s,j}}{b_{2,j}}&=b_{s,j}^{s/(n+2)}.
\end{split}
\end{equation}
Fix $T\ge T_0$. For all sufficiently large $j$, the conditions \r{61} hold with $R=R_j$. Dividing \r{60} by $b_{2,j}$ and using \r{70}, we obtain
\begin{equation}\label{71}
\limsup_{j\to\infty}\big\|\mathbf1_{\{|\xi|>T\}}\widehat H_j\big\|_{L^2}\le\frac{C}{T}.
\end{equation}

Since $\|H_j\|_{L^2}=1$ and $\supp H_j\subset M$, a subsequence converges weakly in $L^2(\R^n)$ to a tensor $H$ supported in $M$. For each fixed $T$, the map $f\mapsto\widehat f|_{\{|\xi|\le T\}}$ from $L^2(M)$ to $L^2(\{|\xi|\le T\})$ has the square-integrable kernel $e^{-\i x\cdot\xi}$, so it is Hilbert--Schmidt and hence compact. Therefore, along this same subsequence,
\[
\big\|\mathbf1_{\{|\xi|\le T\}}(\widehat H_j-\widehat H)\big\|_{L^2}\longrightarrow0.
\]
Weak lower semicontinuity and \r{71} also imply $\|\mathbf1_{\{|\xi|>T\}}\widehat H\|_{L^2}\le C/T$. Splitting the frequencies at $T$ thus yields
\[
\limsup_{j\to\infty}\|\widehat H_j-\widehat H\|_{L^2}\le\frac{2C}{T}.
\]
Letting $T\to\infty$ and applying Plancherel's identity proves strong convergence in $L^2$. In particular, $\|H\|_{L^2}=1$.
\end{proof}

\subsection{The linearized problem}\label{sec3i}
We identify covariant and contravariant tensors using $\e$ and denote the Euclidean tensor by $\e$ in either case.

For a smooth positive function $a$, and a vector field $X$, define the normalized infinitesimal gauge by
\begin{equation}\label{72}
\mathcal G_aX=DX+DX^\top-\bigl(\diver X+2X\cdot\nabla\log a\bigr)\e.
\end{equation}
Thus $a^2\mathcal G_aX$ is the corresponding variation of the conductivity $A_0=a^2\Id$.

\begin{lemma}\label{lemma16}
Let $X\in H_0^1(M;\R^n)$ and let $u,v\in C^\infty(M)$ be $g_0$-harmonic. Then
\begin{equation}\label{73}
\int_M a^2(\mathcal G_aX)^{ij}\partial_i u\,\partial_j v\d x=0,
\end{equation}
and
\begin{equation}\label{74}
L_{A_0}(-X\cdot\nabla u)=-\diver\bigl(a^2\mathcal G_aX\nabla u\bigr)
\end{equation}
in a distributional sense. The same identities hold on $\R^n$ for compactly supported $X\in H^1(\R^n;\R^n)$ and smooth global solutions of $L_{A_0}u=L_{A_0}v=0$.
\end{lemma}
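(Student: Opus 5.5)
The plan is to prove both identities first for $X\in C_0^\infty(\Mint;\R^n)$ and then pass to $X\in H_0^1(M;\R^n)$ by density. Both sides of \r{73} are continuous in $X$ in the $H^1$ norm, since $u,v$ are smooth up to $\bo$. Both sides of \r{74} are continuous from $H^1$ into $\mathcal D'$, since $X\cdot\nabla u\in H^1$ and $a^2\mathcal G_aX\nabla u\in L^2$. The compactly supported global statement on $\R^n$ is handled identically, with no boundary at all.

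For \r{73} I would use a divergence identity rather than the equation for $X\cdot\nabla u$. This avoids boundary terms involving $\partial_\nu(X\cdot\nabla u)$, which need not vanish. Define the vector field
\[
Q=a^2\Bigl[(X\cdot\nabla u)\nabla v+(X\cdot\nabla v)\nabla u-(\nabla u\cdot\nabla v)X\Bigr].
\]
The key computation expands $\diver Q$. The term $\diver\bigl(a^2(X\cdot\nabla u)\nabla v\bigr)$ equals
\[
(X\cdot\nabla u)L_{A_0}v+a^2\,\partial_jX^i\,\partial_iu\,\partial_jv+a^2X^i\,\partial_i\partial_ju\,\partial_jv.
\]
The term with $u$ and $v$ exchanged is analogous. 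The last term gives
\[
\diver\bigl(a^2(\nabla u\cdot\nabla v)X\bigr)=a^2\bigl(\diver X+2X\cdot\nabla\log a\bigr)\nabla u\cdot\nabla v+a^2X\cdot\nabla(\nabla u\cdot\nabla v).
\]
The second-derivative terms $X^i\partial_i\partial_ju\,\partial_jv+X^i\partial_i\partial_jv\,\partial_ju$ cancel against $X\cdot\nabla(\nabla u\cdot\nabla v)$. This leaves
\[
\diver Q=a^2(\mathcal G_aX)^{ij}\partial_iu\,\partial_jv+(X\cdot\nabla u)L_{A_0}v+(X\cdot\nabla v)L_{A_0}u.
\]
Since $L_{A_0}u=L_{A_0}v=0$ and $Q$ vanishes near $\bo$ for compactly supported $X$, integrating over $M$ gives \r{73}.

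For \r{74}, with $w=-X\cdot\nabla u$ and smooth $X$, I would compute $L_{A_0}(X\cdot\nabla u)$ directly. Differentiating $L_{A_0}u=0$ along $X$ gives $X^k\partial_k\bigl(\partial_i(a^2\partial_iu)\bigr)=0$. The commutator $[L_{A_0},X\cdot\nabla]u$ then produces exactly the terms
\[
\partial_i\bigl(a^2(\partial_iX^j+\partial_jX^i)\partial_ju\bigr)-\partial_i\bigl(a^2(\diver X+2X\cdot\nabla\log a)\partial_iu\bigr),
\]
that is, $\diver(a^2\mathcal G_aX\nabla u)$. The $\nabla\log a$ term accounts for derivatives falling on $a^2$. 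A more economical route, which avoids tracking these commutator terms by hand, is to pair with an arbitrary test function $\varphi$. One applies the same $Q$-identity with $v$ replaced by $\varphi$, keeping the term $(X\cdot\nabla u)L_{A_0}\varphi$, since $\varphi$ is not a solution. This gives
\[
\int a^2\mathcal G_aX\nabla u\cdot\nabla\varphi=-\int(X\cdot\nabla u)L_{A_0}\varphi=\int L_{A_0}(-X\cdot\nabla u)\,\varphi,
\]
which is precisely the distributional form of \r{74}. Only $L_{A_0}u=0$ is used here.

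The main obstacle is bookkeeping: verifying the cancellation of the second-derivative terms and the correct coefficient $2X\cdot\nabla\log a$ coming from $\nabla(a^2)=2a^2\nabla\log a$. Using the $Q$-identity for both statements reduces this to a single check. The density step is routine, since for $\varphi\in C_0^\infty$ every term is linear in $X$ and involves at most one derivative of $X$.
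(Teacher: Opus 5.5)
Your proof is correct, but it takes a different route from the paper's.

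\textbf{The paper's route.} For smooth compactly supported $X$ with flow $\varphi_t$, it uses the pushed-forward conductivities $A_t=(a^2D\varphi_tD\varphi_t^\top/\det D\varphi_t)\circ\varphi_t^{-1}$. These satisfy $L_{A_t}(u\circ\varphi_t^{-1})=0$ and $\frac{\d}{\d t}A_t|_{t=0}=a^2\mathcal G_aX$. Differentiating at $t=0$ gives \r{74}. Pairing \r{74} with $v$ then gives \r{73}, and density finishes.

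\textbf{Your route.} You verify everything from the single identity
\[
\diver Q=a^2(\mathcal G_aX)^{ij}\partial_iu\,\partial_jv+(X\cdot\nabla u)L_{A_0}v+(X\cdot\nabla v)L_{A_0}u,
\]
which I checked, including the symmetrization of the $DX$ terms and the coefficient $2X\cdot\nabla\log a$. Its integrated form with $v$ replaced by a test function is the weak form of
\[
L_{A_0}(X\cdot\nabla u)=\diver\bigl(a^2\mathcal G_aX\nabla u\bigr)+\diver\bigl(X\,L_{A_0}u\bigr),
\]
valid for every smooth $u$, so \r{74} follows when $L_{A_0}u=0$.

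\textbf{What each buys.} Your argument is elementary and self-contained. It avoids differentiating the flow, and it bypasses computing $\frac{\d}{\d t}A_t|_{t=0}$, which the paper asserts without derivation. The paper's argument explains where $\mathcal G_aX$ comes from: it is the linearized action of boundary-fixing diffeomorphisms on $A_0$. That is the structural reason such tensors form the kernel in Proposition~\ref{pr2} and deserve the name ``potential.''

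\textbf{Two small remarks.} Your commutator sketch is exact only modulo $(\diver X)L_{A_0}u$, which vanishes here. Also, once you reduce to $X\in C_0^\infty$, boundary terms involving $\partial_\nu(X\cdot\nabla u)$ vanish anyway. So the paper's step of testing \r{74} against $v$ is equally safe; the $Q$-identity is not needed to avoid them.
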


\begin{proof}
First let $X$ be smooth and compactly supported, and let $\varphi_t$ be its flow. The pushed-forward conductivity is
\[
A_t=\left(\frac{a^2D\varphi_tD\varphi_t^\top}{\det D\varphi_t}\right)\circ\varphi_t^{-1},\qquad \left.\frac{\d}{\d t}A_t\right|_{t=0}=a^2\mathcal G_aX.
\]
Differentiating $L_{A_t}(u\circ\varphi_t^{-1})=0$ proves \r{74}. Testing this identity against $v$, and using $L_{A_0}v=0$ and compact support of $X\cdot\nabla u$, proves \r{73}. Both identities extend by $H^1$ density to the stated classes of vector fields.
\end{proof}

We can call $\mathcal G_aX$ with $X\in H_0^1(M;\R^n)$ \emph{potential tensor fields}, by analogy with the X-ray geodesic transform of symmetric two-tensor fields. 

In Proposition~\ref{pr2}, testing the linearized identity with background C.G.O.\ solutions will yield the Fourier condition in the next lemma. Its decomposition writes $H$ as $\mathcal G_aX$ plus a scalar tensor, which Lemma~\ref{lemma18} will eliminate. It will then remain to prove that $X$ vanishes outside $M$, so that $H$ is a potential tensor.

\begin{lemma}\label{lemma17}
Let $H\in L^2(\R^n)$ be a real symmetric tensor with compact support, and assume that for almost every $\xi\ne0$ the restriction of $\widehat H(\xi)$ to $\xi^\perp$ is a scalar multiple of the Euclidean form. Then there are compactly supported $X\in H^1(\R^n;\R^n)$ and $\beta\in L^2(\R^n)$ such that
\begin{equation}\label{75}
H=DX+DX^\top+\beta\e.
\end{equation}
\end{lemma}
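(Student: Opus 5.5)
The plan is to build $X$ and $\beta$ on the Fourier side by linear algebra at each frequency, and then show that $X$ is compactly supported by a rigidity argument for conformal Killing fields outside a large ball.

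\emph{Pointwise decomposition.} Fix $\xi=\rho\omega\ne0$ and write $S=\widehat H(\xi)$, a complex symmetric matrix. By hypothesis $S|_{\omega^\perp}=\lambda\,\mathrm{Id}_{\omega^\perp}$ for some $\lambda=\lambda(\xi)$. Let $P=\Id-\omega\omega^\top$ and $\mu=S(\omega,\omega)$, and set
\[
v=PS\omega+\tfrac12(\mu-\lambda)\omega .
\]
A direct check on the blocks $\omega^\perp\times\omega^\perp$, $\omega^\perp\times\omega$ and $\omega\times\omega$ gives
\[
S=\lambda\,\Id+v\omega^\top+\omega v^\top .
\]
Each of $\lambda$ and $v$ is given by an explicit linear formula in $S$ with coefficients depending only on $\omega$. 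For instance, $\lambda=(\tr S-\mu)/(n-1)$. Hence $|\lambda|+|v|\le C|\widehat H(\xi)|$, and $\lambda,v$ are measurable in $\xi$. Define
\[
\widehat X(\xi)=-\i\,v(\xi)/|\xi|,\qquad \widehat\beta=\lambda .
\]
Then $\widehat{DX+DX^\top}=\i(\xi\widehat X^\top+\widehat X\xi^\top)=v\omega^\top+\omega v^\top$, so \r{75} holds in Fourier space. The identities $\lambda(-\xi)=\overline{\lambda(\xi)}$ and $v(-\xi)=-\overline{v(\xi)}$ follow from $\widehat H(-\xi)=\overline{\widehat H(\xi)}$ and $\omega\mapsto-\omega$, so $X$ and $\beta$ are real.

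\emph{Integrability.} We have $\beta\in L^2$ and $\nabla X\in L^2$ by Plancherel. Since $H$ is compactly supported, $\widehat H$ is bounded (indeed analytic). Near $\xi=0$ we have $|\widehat X|\le C|\xi|^{-1}$, which is square integrable because $n\ge3$. Thus $X\in H^1(\R^n)$.

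\emph{Compact support (main step).} Let $B_0$ be a ball containing $\supp H$. Taking the trace of \r{75} shows $\beta=\frac1n(\tr H-2\diver X)$. Outside $B_0$ this gives
\[
DX+DX^\top-\tfrac2n(\diver X)\e=0,
\]
so $X$ is a distributional conformal Killing field on $\R^n\setminus B_0$. This region is connected, since $n\ge2$. The conformal Killing operator is overdetermined elliptic. So $X$ is smooth there, and by Liouville's theorem for $n\ge3$ it is the restriction of an infinitesimal Möbius field: a polynomial of degree at most two of the form
\[
X(x)=p+Ax+\kappa x+2(b\cdot x)x-|x|^2b ,
\]
with $A$ skew. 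Such a field lies in $L^2$ near infinity only if it vanishes identically. Hence $X=0$ outside $B_0$, and then $\beta=\frac1n\tr H=0$ there as well.

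The delicate point is justifying the Liouville step for an $H^1$ distributional solution. I would first mollify $X$: the equation is translation invariant with constant coefficients, so it is preserved outside a slightly larger ball. Then I would use the classical fact that smooth conformal Killing fields on a connected open subset of $\R^n$, $n\ge3$, satisfy $\partial^3X=0$. This follows by differentiating the equation three times, and it reduces the problem to the polynomial classification above. The remaining steps are routine.
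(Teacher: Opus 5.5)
Your proposal is correct and follows the paper's proof essentially step by step. You use the same frequency-wise decomposition: your $v$ coincides with the paper's $\i\rho\widehat X=S\omega-\frac12\bigl(S(\omega,\omega)+\widehat\beta\bigr)\omega$. You use $n\ge3$ in the same way, for square integrability of $\widehat X$ near $\xi=0$. You also use the same conformal Killing rigidity outside a ball, giving $D^3X=0$, hence a polynomial of degree at most two, hence $X=0$ by square integrability. The only difference is cosmetic: you mollify (or invoke overdetermined ellipticity) before differentiating, whereas the paper derives $D^3X=0$ directly in the distributional sense.
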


\begin{proof}
For $\xi=\rho\omega\ne0$, set $S=\widehat H(\xi)$ and define
\begin{equation}\label{76}
\widehat\beta(\xi)=\frac{\tr S-S(\omega,\omega)}{n-1},\qquad
\widehat X(\xi)=\frac{S\omega-\frac12\bigl(S(\omega,\omega)+\widehat\beta(\xi)\bigr)\omega}{\i\rho}.
\end{equation}
The hypothesis implies $S=\i(\xi\otimes\widehat X+\widehat X\otimes\xi)+\widehat\beta\e$, proving \r{75}. Moreover, $|\widehat\beta|+|\xi||\widehat X|\le C|\widehat H|$. Since $H$ has compact support, $\widehat H$ is bounded, and $|\xi|^{-2}$ is integrable near zero for $n\ge3$. These bounds imply $X\in H^1$ and $\beta\in L^2$; the reality of $H$ implies that $X,\beta$ are real.

Outside a ball containing $\supp H$, taking the trace of \r{75} yields
\[
\partial_iX_j+\partial_jX_i=\frac2n(\diver X)\delta_{ij}.
\]
Differentiating and combining the three permuted identities gives, distributionally,
\begin{equation}\label{77}
\partial_i\partial_jX_k=\frac1n\bigl(\delta_{jk}\partial_i\diver X+\delta_{ik}\partial_j\diver X-\delta_{ij}\partial_k\diver X\bigr).
\end{equation}
Taking the divergence in $k$ gives $(n-2)\partial_i\partial_j\diver X=-\delta_{ij}\Delta\diver X$. Tracing this identity shows that $\Delta\diver X=0$, hence $D^2\diver X=0$. By \r{77}, $D^3X=0$ there. Thus $X$ is a polynomial of degree at most two on the connected exterior of the ball; since $X\in L^2$, it vanishes there. Equation \r{75} then shows that $\beta$ vanishes there as well.
\end{proof}

The following lemma is basically the Sylvester--Uhlmann \cite{SylvesterU87} argument but with $\sigma\in L^2$ only. 
\begin{lemma}\label{lemma18}
Let $a>0$ be smooth on $\R^n$ and equal to one outside a ball. If $\sigma\in L^2(\R^n)$ has compact support and
\begin{equation}\label{78}
\int_{\R^n}a^2\sigma\nabla u\cdot\nabla v\, \d x=0
\end{equation}
for every pair of smooth global solutions of $L_{A_0}u=L_{A_0}v=0$, then $\sigma=0$.
\end{lemma}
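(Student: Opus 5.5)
The plan is to use the background C.G.O.\ solutions of Lemma~\ref{lemma10} for both $u$ and $v$ in \r{78}, with $\zeta+\eta=-\i\xi$, and let $|\zeta|\to\infty$ for each fixed $\xi$. This is the Sylvester--Uhlmann limit, which is allowed here because $\sigma$ is a fixed tensor. The resulting leading term is $\zeta\cdot\eta\,\widehat\sigma(\xi)$, and we will show that it vanishes for every $\xi$.

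Fix $\xi\in\R^n$ and write $\xi=\rho\omega$; the case $\xi=0$ is handled the same way with any unit $\omega$. Since $n\ge3$, choose orthonormal $a',b'\in\omega^\perp$ and, for $t$ large, set
\[
\zeta_t=t a'+\i\Bigl(-\tfrac{\xi}{2}+\sqrt{t^2-\tfrac{\rho^2}{4}}\,b'\Bigr),\qquad \eta_t=-t a'+\i\Bigl(-\tfrac{\xi}{2}-\sqrt{t^2-\tfrac{\rho^2}{4}}\,b'\Bigr).
\]
Both vectors are null, $\zeta_t+\eta_t=-\i\xi$, and $|\zeta_t|,|\eta_t|\sim\sqrt2\,t$. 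A direct computation gives
\[
\zeta_t\cdot\eta_t=-t^2-\frac{\rho^2}{4}+t^2-\frac{\rho^2}{4}=-\frac{\rho^2}{2}.
\]
This is bounded, which is the standard problem: $\zeta_t\cdot\eta_t$ does not grow, while $|\zeta_t||\eta_t|\sim t^2$. We therefore do not normalize. Instead we expand $\nabla u\cdot\nabla v$ and use cancellation.

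With $u=a^{-1}e^{x\cdot\zeta}(1+r^0_\zeta)$ and $v=a^{-1}e^{x\cdot\eta}(1+r^0_\eta)$, we have
\[
a^2\nabla u\cdot\nabla v=e^{-\i x\cdot\xi}\bigl(\zeta(1+r_\zeta^0)+\nabla r_\zeta^0-(1+r_\zeta^0)\nabla\log a\bigr)\cdot\bigl(\eta(1+r_\eta^0)+\nabla r_\eta^0-(1+r_\eta^0)\nabla\log a\bigr).
\]
The top-order term is $\zeta\cdot\eta\,(1+r_\zeta^0)(1+r_\eta^0)$. The dangerous cross terms are $\zeta\cdot\nabla r^0_\eta$, $\eta\cdot\nabla r^0_\zeta$, $\zeta\cdot\nabla\log a$ and $\eta\cdot\nabla\log a$. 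By \r{43} these are only $O(1)$, not $o(1)$, so \r{78} with $\sigma\in L^2$ alone does not kill them.

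The remedy is to rewrite the bilinear form. Since $L_{A_0}u=L_{A_0}v=0$, we have the identity
\[
a^2\nabla u\cdot\nabla v=\tfrac12 L_{A_0}(uv).
\]
Equivalently, set $w=a\,u\cdot a\,v=e^{-\i x\cdot\xi}(1+r^0_\zeta)(1+r^0_\eta)$. Then
\[
\tfrac{1}{a^2}L_{A_0}\Bigl(\frac{w}{a^2}\Bigr)a^2=\Delta w+\dots
\]
The exponential factor of $w$ has bounded frequency $\xi$, so all terms are controlled by bounded derivatives of $r^0$.

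Concretely, I would use the substitution of Lemma~\ref{lemma9}: with $\tilde u=au$, the identity \r{40} gives $(\Delta-q)\tilde u=0$. Expanding,
\[
a^2\nabla u\cdot\nabla v=\nabla\tilde u\cdot\nabla\tilde v-\nabla\log a\cdot\nabla(\tilde u\tilde v)+|\nabla\log a|^2\tilde u\tilde v,
\]
and
\[
\nabla\tilde u\cdot\nabla\tilde v=\tfrac12\Delta(\tilde u\tilde v)-q\,\tilde u\tilde v.
\]
Every term is now a derivative of order at most two of $\tilde u\tilde v=e^{-\i x\cdot\xi}(1+r^0_\zeta)(1+r^0_\eta)$, times fixed smooth coefficients. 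By \r{43} with $\ell=2$,
\[
\tilde u\tilde v\to e^{-\i x\cdot\xi}\quad\text{in }C^2(\B)\text{ as }t\to\infty.
\]
The pairing in \r{78} is against the fixed $\sigma\in L^2$ with compact support in $\B$, so we may pass to the limit. This yields
\[
\int\sigma\,P_\xi\bigl(e^{-\i x\cdot\xi}\bigr)\,\d x=0,\qquad P_\xi=\tfrac12\Delta-q-\nabla\log a\cdot\nabla+|\nabla\log a|^2,
\]
for every $\xi\in\R^n$.

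The main obstacle is extracting $\sigma=0$ from this limiting identity. Computing,
\[
P_\xi e^{-\i x\cdot\xi}=e^{-\i x\cdot\xi}\bigl(-\tfrac12|\xi|^2+\i\xi\cdot\nabla\log a+|\nabla\log a|^2-q\bigr).
\]
So the Fourier transforms of $\sigma$, $\sigma\partial_k\log a$, and $\sigma(|\nabla\log a|^2-q)$ satisfy a first-order relation in $\xi$; this is the Fourier transform of a differential equation. Going back to physical space, the identity says
\[
P_\xi^{*}\sigma=0\quad\text{distributionally},\qquad P_\xi^{*}=\tfrac12\Delta-q+\diver(\,\cdot\,\nabla\log a)+|\nabla\log a|^2,
\]
since it holds for a family of test functions $e^{-\i x\cdot\xi}$ that is complete on the compact support. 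Hence $\sigma$ solves the elliptic equation $\tfrac12\Delta\sigma+\text{lower order}=0$ on $\R^n$ with compact support. By unique continuation, or simply by ellipticity together with vanishing outside a ball, $\sigma\equiv0$.

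This last passage is where I would check most carefully. Completeness of the exponentials as test functions is used for all $\xi$ simultaneously. This is legitimate because the identity holds for each $\xi$, and a compactly supported distribution is determined by its pairings with all $e^{-\i x\cdot\xi}$. Applying this to $P_\xi^{*}\sigma$ shows that the compactly supported distribution $P^*\sigma$ vanishes, where the $\xi$-independent operator $P^*=\tfrac12\Delta-q+\dots$ is read off from the polynomial dependence on $\xi$. Unique continuation for a second-order elliptic operator with smooth coefficients then finishes the argument.
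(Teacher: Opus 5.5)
Your proof is correct and follows the paper's route. Your rewriting $a^2\nabla u\cdot\nabla v=P\bigl((au)(av)\bigr)$ is the paper's product identity $L_{A_0}(uv)=2a^2\nabla u\cdot\nabla v$, and the $C^2$ convergence $(au^0_{\zeta_t})(au^0_{\eta_t})\to e^{-\i x\cdot\xi}$ then shows that the Fourier transform of the compactly supported distribution $P^{*}\sigma$ vanishes. One simplification: your $P^{*}$ is exactly $\tfrac12 a^{-2}L_{A_0}$, because the zeroth-order terms cancel via $\Delta\log a=q-|\nabla\log a|^2$. So, as in the paper, you can avoid unique continuation: use elliptic regularity and test $L_{A_0}\sigma=0$ against $\overline{\sigma}$ to get $\int a^2|\nabla\sigma|^2\,\d x=0$.
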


\begin{proof}
The product identity $L_{A_0}(uv)=2a^2\nabla u\cdot\nabla v$ and \r{78} imply
\begin{equation}\label{79}
\big\langle a^{-2}L_{A_0}\sigma,(au)(av)\big\rangle=0.
\end{equation}
The distribution in this pairing belongs to $H^{-2}$ and has compact support. Fix $\xi\in\R^n$, choose orthonormal $p,q\in\xi^\perp$, and, for $t>|\xi|/2$, set
\begin{equation}\label{80}
\begin{split}
\zeta_t&=tp+\i\left(-\frac\xi2+\sqrt{t^2-\frac{|\xi|^2}{4}}\,q\right),\\
\eta_t&=-tp+\i\left(-\frac\xi2-\sqrt{t^2-\frac{|\xi|^2}{4}}\,q\right).
\end{split}
\end{equation}
These vectors are null, have norm $\sqrt2t$, and satisfy $\zeta_t+\eta_t=-\i\xi$. Apply the background construction of Lemma~\ref{lemma10} with two derivatives on a ball containing $\supp\sigma$. Its solutions satisfy
\[
(au_{\zeta_t}^0)(au_{\eta_t}^0)=e^{-\i x\cdot\xi}(1+r_{\zeta_t}^0)(1+r_{\eta_t}^0)\longrightarrow e^{-\i x\cdot\xi}
\]
in $C^2$ near $\supp\sigma$. Passing to the limit in \r{79} shows that the Fourier transform of $a^{-2}L_{A_0}\sigma$ vanishes. Hence $L_{A_0}\sigma=0$. Elliptic regularity makes $\sigma$ smooth, and testing against $\overline{\sigma}$ gives $\int a^2|\nabla\sigma|^2\d x=0$. Compact support then implies $\sigma=0$.
\end{proof}

The next proposition identifies the kernel of the linearized conductivity DN map at $A_0$ with the potential tensors. For smooth $H$, the bilinear form in \r{81} represents the linearized data in the direction $a^2H$; its $L^2$ formulation is needed for the limit in Lemma~\ref{lemma15}.

\begin{proposition}\label{pr2}
A real symmetric tensor $H\in L^2(M)$ satisfies
\begin{equation}\label{81}
\int_M a^2H^{ij}\partial_i u\,\partial_j v\d x=0
\end{equation}
for all $g_0$-harmonic $u,v\in C^\infty(M)$ if and only if
\begin{equation}\label{82}
H=\mathcal G_aX\qquad\text{for some }X\in H_0^1(M;\R^n).
\end{equation}
\end{proposition}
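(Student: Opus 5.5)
The ``if'' direction is Lemma~\ref{lemma16}, identity \r{73}. For the converse, extend $H$ by zero to $\R^n$ and use the background solutions of Lemma~\ref{lemma10}. Global solutions restrict to $g_0$-harmonic functions on $M$, so \r{81} holds for all pairs of smooth global solutions of $L_{A_0}u=0$.

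\textbf{Step 1: Fourier condition.} Fix $\xi=\rho\omega\ne0$ and choose $p,q\in\omega^\perp$ orthonormal. Use the family \r{80} with large $t$: $\zeta_t=tp+\i(-\xi/2+s_tq)$, $\eta_t=-tp+\i(-\xi/2-s_tq)$, where $s_t=\sqrt{t^2-|\xi|^2/4}$. Insert $u=u^0_{\zeta_t}$ and $v=u^0_{\eta_t}$ into \r{81} and divide by $t^2$. By \r{42}--\r{43}, $t^{-1}ae^{-x\cdot\zeta_t}\nabla u^0_{\zeta_t}\to p+\i q$ and $t^{-1}ae^{-x\cdot\eta_t}\nabla u^0_{\eta_t}\to -p-\i q$ uniformly on $M$. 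The factor $a^2$ cancels and the phase is $e^{-\i x\cdot\xi}$. Since $H\in L^2(M)$, dominated convergence gives
\[
\widehat H(\xi)(p+\i q,p+\i q)=0,
\]
that is, $\widehat H(p,p)-\widehat H(q,q)=0$ and $\widehat H(p,q)=0$ for every orthonormal pair in $\xi^\perp$. Hence the restriction of $\widehat H(\xi)$ to $\xi^\perp$ is a scalar multiple of the Euclidean form. Lemma~\ref{lemma17} then gives compactly supported $X\in H^1(\R^n;\R^n)$ and $\beta\in L^2$ with $H=DX+DX^\top+\beta\e$, and $X=0$, $\beta=0$ outside a large ball. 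Rewrite this as $H=\mathcal G_aX+\sigma\e$ with $\sigma=\beta+\diver X+2X\cdot\nabla\log a\in L^2$, compactly supported.

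\textbf{Step 2: remove the scalar part.} The global version of \r{73} in Lemma~\ref{lemma16} applies to the compactly supported $X\in H^1$. Subtracting it from \r{81} gives $\int a^2\sigma\nabla u\cdot\nabla v=0$ for all global solutions. Lemma~\ref{lemma18} then gives $\sigma=0$, so $H=\mathcal G_aX$ on $\R^n$.

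\textbf{Step 3: $X$ vanishes outside $M$.} This is the main obstacle, since $X$ is so far only known to vanish outside a large ball. On the exterior $\Omega=\R^n\setminus M$, where $H=0$, we have $\mathcal G_aX=0$. Take the trace-free part: $DX+DX^\top-\frac2n(\diver X)\e=0$, which is the conformal Killing equation for $a^{4/(n-2)}\e$ up to a weighted rescaling. The trace part then fixes $\diver X$ in terms of $X\cdot\nabla\log a$, consistently.

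Conformal Killing fields of a conformally flat metric in dimension $n\ge3$ are determined by finite jets at any point, as in the polynomial argument \r{77} of Lemma~\ref{lemma17}. They are therefore real analytic, satisfying a finite-type overdetermined system, and obey unique continuation on connected open sets. $X$ vanishes on the unbounded part of each component of $\Omega$. If $\R^n\setminus M$ is connected this gives $X=0$ on $\Omega^{\rm int}$. Then $X\in H^1(\R^n)$ with $X=0$ off $M$, so $X\in H^1_0(M)$ for smooth $\partial M$.

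If $\R^n\setminus M$ has bounded components (cavities), I would first try to handle them via the DN data on the inner boundary. Failing that, I would restrict to connected complement, or correct $X$ there by a conformal Killing field of the cavity, since the statement only requires \emph{some} $X$. I would first try this correction: on a cavity $X$ is a conformal Killing field, and replacing it by zero inside keeps $\mathcal G_aX=H$ a.e. in $M$. The remaining point is whether the trace of $X$ on the inner boundary vanishes, which would need the additional identity \r{74} to show that a nonzero trace contradicts \r{81}.
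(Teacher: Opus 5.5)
Your Steps 1 and 2 follow the paper. One small point: $\widehat H(\xi)$ is complex, so the single equation $\widehat H(\xi)(p+\i q,p+\i q)=0$ does not by itself split into $\widehat H(p,p)=\widehat H(q,q)$ and $\widehat H(p,q)=0$; you also need the equation obtained from $q\mapsto -q$, which your phrase ``every orthonormal pair'' implicitly covers. Step 3 is correct on the unbounded component of $\R^n\setminus M$. There the trace-free part of $\mathcal G_aX=0$ is the Euclidean conformal Killing equation, so the argument of \eqref{77} makes $X$ a polynomial of degree at most two, and it vanishes because it vanishes near infinity.

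The genuine gap is the bounded components (cavities) of $\R^n\setminus M$. The statement does not exclude them, and none of your fallbacks handles them. Restricting to a connected complement changes the statement. The ``correction'' cannot work, because $X$ has no freedom. If $X'\in H^1_0(M;\R^n)$ satisfies $\mathcal G_aX'=H$ on $M$, its zero extension satisfies $\mathcal G_aX'=H$ on all of $\R^n$. Then $X'-X$ is a compactly supported conformal Killing field on $\R^n$, hence zero. So you must show that the field $X$ from Lemma~\ref{lemma17} itself vanishes in every cavity. The local equation $\mathcal G_aX=0$ there cannot decide this: for instance, if $a\equiv1$ on a cavity, every Euclidean Killing field solves it. Some additional information from \eqref{81} is needed.

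The missing idea, which the paper uses, is that \eqref{81} only sees $a$ on $M$, whereas the extension of $a$ outside $M$ is arbitrary. Take a real $\varphi\in C_0^\infty(\R^n\setminus M)$ and set $\tilde a=ae^\varphi$.
\begin{itemize}
\item Global solutions for $\tilde a^2\Id$ are still $g_0$-harmonic on $M$, so \eqref{81} holds for them.
\item The fields $X,\beta$ in \eqref{76} depend only on $\widehat H$, so they are unchanged.
\item Rerunning your Steps 1--2 with $\tilde a$ gives $H=\mathcal G_{\tilde a}X$, hence $0=\mathcal G_{\tilde a}X-\mathcal G_aX=-2(X\cdot\nabla\varphi)\e$.
\item Choose $\varphi$ equal to a coordinate function $x^k$ near any ball compactly contained in $\R^n\setminus M$. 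This forces $X^k=0$ on that ball.
\end{itemize}
Thus $X=0$ on every component of the complement, bounded or not, and $X\in H^1_0(M;\R^n)$ follows. This also makes your unique-continuation argument on the unbounded component unnecessary.
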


\begin{proof}
The reverse implication is Lemma~\ref{lemma16}. For the forward implication, extend $H$ by zero to $\R^n$. Fix $\xi\ne0$ and orthonormal $p,q\in\xi^\perp$, and use the background C.G.O.\ solutions with phases \r{80} in \r{81}. By \r{43}, uniformly on $M$,
\[
t^{-1}ae^{-x\cdot\zeta_t}\nabla u_{\zeta_t}^0\longrightarrow p+\i q,\qquad
t^{-1}ae^{-x\cdot\eta_t}\nabla u_{\eta_t}^0\longrightarrow-p-\i q.
\]
Dividing \r{81} by $t^2$ and letting $t\to\infty$ therefore yields
\begin{equation}\label{83}
\widehat H(\xi)(p+\i q,p+\i q)=0.
\end{equation}
Replacing $q$ by $-q$ in \r{83} shows that $\widehat H(\xi)(p,p)=\widehat H(\xi)(q,q)$ and $\widehat H(\xi)(p,q)=0$. Thus the restriction to $\xi^\perp$ is scalar. Lemma~\ref{lemma17} provides the compactly supported $X,\beta$ in \r{75}, and
\begin{equation}\label{84}
H-\mathcal G_aX=\sigma\e,\qquad \sigma=\beta+\diver X+2X\cdot\nabla\log a.
\end{equation}
The scalar $\sigma$ belongs to $L^2$ and has compact support. Subtracting the global identity \r{73} from \r{81} shows that \r{78} holds, so Lemma~\ref{lemma18} implies $\sigma=0$.

It remains to prove that $X$ vanishes outside $M$, including any bounded complementary components. Let $\varphi\in C_0^\infty(\R^n\setminus M)$ be real and set $\tilde a=ae^\varphi$. Since $\tilde a=a$ on $M$, the global solutions for $\tilde a^2\Id$ restrict to $g_0$-harmonic functions in $M$, so \r{81} also holds for this background. The decomposition \r{76} depends only on $H$ and therefore produces the same $X,\beta$. Repeating the scalar argument in \r{84} with $\tilde a$ gives $H=\mathcal G_{\tilde a}X$, and hence
\begin{equation}\label{85}
0=\mathcal G_{\tilde a}X-\mathcal G_aX=-2(X\cdot\nabla\varphi)\e.
\end{equation}
On any ball compactly contained in $\R^n\setminus M$, choose $\varphi$ equal to a coordinate function near that ball. Equation \r{85} then forces each component of $X$ to vanish there. Thus the global $H^1$ vector field $X$ is zero outside $M$, and the trace characterization of $H_0^1$ on smooth domains proves \r{82}.
\end{proof}

\subsection{Completion of the proof}\label{sec3j}
\begin{proof}[Proof of Theorem~\ref{thm2}]
Fix $g_0$ and suppose the conclusion fails. Then there are smooth metrics $g_j$ with $\Lambda_{g_j}=\Lambda_{g_0}$ and $\|g_j-g_0\|_{H^s(M)}\to0$ which are not related to $g_0$ by boundary-fixing diffeomorphisms. Apply Lemmas~\ref{lemma6}--\ref{lemma7} and write $\overline{g}_j=(\Phi_j)_*g_j$, with perturbations $B_j$ as in \r{20}. By \r{29}, $b_{s,j}\to0$. Moreover, $B_j\ne0$: otherwise $A_{\overline{g}_j}=A_0$, hence $\overline{g}_j=g_0$ since $n\ge3$, contradicting the choice of $g_j$. Lemma~\ref{lemma15} provides, after passage to a subsequence,
\[
H_j=B_j/b_{2,j}\longrightarrow H\quad\text{in }L^2,\qquad \|H\|_{L^2}=1.
\]
Set $A_j=A_{\overline{g}_j}=a^2(\Id+B_j)$. By \r{35}, $A_j\to A_0$ uniformly, and these conductivities are uniformly elliptic for large $j$.

Fix $g_0$-harmonic $u,v\in C^\infty(M)$ and let $u_j$ solve $L_{A_j}u_j=0$ with boundary value $u$. Then
\[
L_{A_j}(u_j-u)=-\diver(a^2B_j\nabla u),\qquad (u_j-u)|_{\partial M}=0.
\]
The energy estimate and Poincar\'e inequality imply
\begin{equation}\label{86}
\|u_j-u\|_{H^1(M)}\le Cb_{2,j},
\end{equation}
where $C$ may depend on the fixed function $u$. Applying \r{19} to $\overline{g}_j$ and using \r{86}, we obtain
\begin{equation}\label{87}
\left|\int_M a^2H_j^{ij}\partial_i u\,\partial_j v\d x\right|
=\frac1{b_{2,j}}\left|\int_M a^2B_j^{ij}\partial_i(u_j-u)\,\partial_j v\d x\right|\le Cb_{2,j}\longrightarrow0.
\end{equation}
Strong $L^2$ convergence in \r{87} proves \r{81} for $H$. Proposition~\ref{pr2} therefore gives $H=\mathcal G_aX$ for some $X\in H_0^1(M;\R^n)$.

Let $Z_j$ be the normal fields from Lemma~\ref{lemma6} and set $V_j=Z_j/b_{2,j}$. By \r{22}, this sequence is bounded in $H_0^1(M;\R^N)$, and \r{21}, \r{27} imply
\begin{equation}\label{88}
L_{A_j}V_j=-\diver(a^2H_j\nabla F),\qquad DF^\top V_j=0.
\end{equation}
After passing to a further subsequence, $V_j\rightharpoonup V$ in $H_0^1$. Since $A_j\to A_0$ uniformly and $H_j\to H$ in $L^2$, passing to the weak formulation of \r{88} yields
\begin{equation}\label{89}
L_{A_0}V=-\diver(a^2H\nabla F),\qquad DF^\top V=0.
\end{equation}
Each component of $F$ is $g_0$-harmonic. Thus \r{74}, applied to $H=\mathcal G_aX$, shows that $-DFX\in H_0^1(M;\R^N)$ solves the same Dirichlet problem as $V$ in \r{89}. Uniqueness gives $V=-DFX$. The normality in \r{89} now implies $DF^\top DF\,X=0$. Since $DF$ has full rank by Lemma~\ref{lemma5}, we have $X=0$, hence $H=0$, contradicting $\|H\|_{L^2}=1$.
\end{proof}


\begin{thebibliography}{10}

\bibitem{Alessandrini88}
G.~Alessandrini.
\newblock Stable determination of conductivity by boundary measurements.
\newblock {\em Appl. Anal.}, 27(1-3):153--172, 1988.

\bibitem{AlessandriniCabib07}
G.~Alessandrini and E.~Cabib.
\newblock Determining the anisotropic traction state in a membrane by boundary
  measurements.
\newblock {\em Inverse Probl. Imaging}, 1(3):437--442, 2007.

\bibitem{AstalaLP05}
K.~Astala, M.~Lassas, and L.~P\"aiv\"arinta.
\newblock {C}alder\'on's inverse problem for anisotropic conductivity in the
  plane.
\newblock {\em Comm. Partial Differential Equations}, 30:207--224, 2005.

\bibitem{BalUhlmann10}
G.~Bal and G.~Uhlmann.
\newblock Inverse diffusion theory of photoacoustics.
\newblock {\em Inverse Problems}, 26(8):085010, 2010.
\newblock \url{https://doi.org/10.1088/0266-5611/26/8/085010}.

\bibitem{Burago-Ivanov}
D.~Burago and S.~Ivanov.
\newblock Boundary rigidity and filling volume minimality of metrics close to a
  flat one.
\newblock {\em Ann. of Math. (2)}, 171(2):1183--1211, 2010.

\bibitem{Calderon80}
A.~P. Calder{\'o}n.
\newblock On an inverse boundary value problem.
\newblock In {\em Seminar on numerical analysis and its applications to
  continuum physics (Rio de Janeiro, 1980)}, number~12 in Cole\c{c}\~{a}o Atas,
  pages 65--73. Sociedade Brasileira de Matem\'atica, Rio de Janeiro, 1980.

\bibitem{CarsteaLT24}
C.~I. C\^arstea, T.~Liimatainen, and L.~Tzou.
\newblock The {C}alder\'on problem on {R}iemannian surfaces and of minimal surfaces.
\newblock Preprint, arXiv:2406.16944, 2024.
\newblock \url{https://arxiv.org/abs/2406.16944}.

\bibitem{DDO13}
M.~Desbrun, R.~D. Donaldson, and H.~Owhadi.
\newblock Modeling across scales: discrete geometric structures in
  homogenization and inverse homogenization.
\newblock In M.~Z. Pesenson, editor, {\em Multiscale Analysis and Nonlinear
  Dynamics}, pages 19--64. Wiley-VCH, 2013.

\bibitem{DosSantosKSU09}
D.~Dos Santos~Ferreira, C.~E. Kenig, M.~Salo, and G.~Uhlmann.
\newblock Limiting {C}arleman weights and anisotropic inverse problems.
\newblock {\em Invent. Math.}, 178(1):119--171, 2009.

\bibitem{DosSantosKLS16}
D.~Dos Santos~Ferreira, Y.~Kurylev, M.~Lassas, and M.~Salo.
\newblock The {C}alder\'on problem in transversally anisotropic geometries.
\newblock {\em J. Eur. Math. Soc. (JEMS)}, 18(11):2579--2626, 2016.

\bibitem{Gunther-book}
J.~Feldman, M.~Salo, and G.~Uhlmann.
\newblock {\em The {C}alder\'on problem---an introduction}, volume 253 of {\em
  Graduate Studies in Mathematics}.
\newblock American Mathematical Society, Providence, RI, [2025] \copyright
  2025.

\bibitem{GreeneWu75}
R.~E. Greene and H.~Wu.
\newblock Embedding of open {R}iemannian manifolds by harmonic functions.
\newblock {\em Ann. Inst. Fourier (Grenoble)}, 25(1):215--235, 1975.
\newblock \url{https://doi.org/10.5802/aif.549}.

\bibitem{Gromov}
M.~Gromov.
\newblock Filling {R}iemannian manifolds.
\newblock {\em J. Differential Geom.}, 18(1):1--147, 1983.

\bibitem{GuillarmouTzou11}
C.~Guillarmou and L.~Tzou.
\newblock {C}alder\'on inverse problem with partial data on {R}iemann surfaces.
\newblock {\em Duke Math. J.}, 158(1):83--120, 2011.
\newblock \url{https://doi.org/10.1215/00127094-1276310}.

\bibitem{Hormander3}
L.~H{\"o}rmander.
\newblock {\em The analysis of linear partial differential operators. {III}},
  volume 274.
\newblock Springer-Verlag, Berlin, 1985.
\newblock Pseudodifferential operators.

\bibitem{IKT13}
H.~Inci, T.~Kappeler, and P.~Topalov.
\newblock On the regularity of the composition of diffeomorphisms.
\newblock {\em Mem. Amer. Math. Soc.}, 226(1062):vi+60, 2013.

\bibitem{LLS-Poisson20}
M.~Lassas, T.~Liimatainen, and M.~Salo.
\newblock The {P}oisson embedding approach to the {C}alder\'on problem.
\newblock {\em Math. Ann.}, 377(1-2):19--67, 2020.

\bibitem{LassasTU03}
M.~Lassas, M.~Taylor, and G.~Uhlmann.
\newblock The {D}irichlet-to-{N}eumann map for complete {R}iemannian manifolds
  with boundary.
\newblock {\em Comm. Anal. Geom.}, 11(2):207--221, 2003.

\bibitem{LassasU01}
M.~Lassas and G.~Uhlmann.
\newblock On determining a {R}iemannian manifold from the
  {D}irichlet-to-{N}eumann map.
\newblock {\em Ann. Sci. \'{E}cole Norm. Sup. (4)}, 34(5):771--787, 2001.

\bibitem{LeeU}
J.~M. Lee and G.~Uhlmann.
\newblock Determining anisotropic real-analytic conductivities by boundary
  measurements.
\newblock {\em Comm. Pure Appl. Math.}, 42(8):1097--1112, 1989.

\bibitem{Lin26}
Y.-H. Lin.
\newblock The anisotropic {C}alder\'on problem: rigidity near the {E}uclidean
  metric.
\newblock Preprint, arXiv:2609.17261, 2026.

\bibitem{Mandache}
N.~Mandache.
\newblock Exponential instability in an inverse problem for the {S}chr\"odinger
  equation.
\newblock {\em Inverse Problems}, 17(5):1435--1444, 2001.

\bibitem{Nachman_98}
A.~I. Nachman.
\newblock Reconstructions from boundary measurements.
\newblock {\em Ann. of Math. (2)}, 128(3):531--576, 1988.

\bibitem{ORS-Lorentzian26}
L.~Oksanen, Rakesh, and M.~Salo.
\newblock Rigidity in the {L}orentzian {C}alder\'on problem with formally
  determined data.
\newblock {\em Comm. Amer. Math. Soc.}, 6:749--792, 2026.

\bibitem{ORS-Semiglobal26}
L.~Oksanen, Rakesh, and M.~Salo.
\newblock Semiglobal uniqueness for the {L}orentzian {C}alder\'on problem,
  2026.
\newblock arXiv:2608.13116.

\bibitem{Salo08}
M.~Salo.
\newblock {C}alder\'on problem.
\newblock Lecture notes, University of Helsinki, 2008.
\newblock Spring 2008.
  \url{https://users.jyu.fi/~salomi/lecturenotes/calderon_lectures.pdf}.


\bibitem{SU-JFA}
P.~Stefanov and G.~Uhlmann.
\newblock Stability estimates for the hyperbolic {D}irichlet to {N}eumann map
  in anisotropic media.
\newblock {\em J. Funct. Anal.}, 154(2):330--358, 1998.

\bibitem{SU-JFA09}
P.~Stefanov and G.~Uhlmann.
\newblock Linearizing non-linear inverse problems and an application to inverse
  backscattering.
\newblock {\em J. Funct. Anal.}, 256(9):2842--2866, 2009.

\bibitem{Sylvester90}
J.~Sylvester.
\newblock An anisotropic inverse boundary value problem.
\newblock {\em Comm. Pure Appl. Math.}, 43(2):201--232, 1990.

\bibitem{SylvesterU87}
J.~Sylvester and G.~Uhlmann.
\newblock A global uniqueness theorem for an inverse boundary value problem.
\newblock {\em Ann. of Math. (2)}, 125(1):153--169, 1987.

\bibitem{uhl_syl}
J.~Sylvester and G.~Uhlmann.
\newblock Inverse problems in anisotropic media.
\newblock In {\em Inverse scattering and applications ({A}mherst, {MA}, 1990)},
  volume 122 of {\em Contemp. Math.}, pages 105--117. Amer. Math. Soc.,
  Providence, RI, 1991.

\end{thebibliography}

\end{document}